\numberwithin{equation}{section}
\newtheorem{theorem}{Theorem}[section]
\newtheorem{proposition}[theorem]{Proposition}
\newtheorem{remark}[theorem]{Remark}
\newtheorem{definition}[theorem]{Definition}
\newtheorem{example}[theorem]{Example}
\newtheorem{lemma}[theorem]{Lemma}
\newtheorem{case}{Case}
\newtheorem{claim}{Claim}
\def\var{\mathbb{V}\mathrm{ar}}
\newcommand{\la}{\lambda}
\newcommand{\RR}{\mathbb{R}}
\newcommand{\bg}{\mathbf{g}}
\newcommand{\bh}{\mathbf{h}}
\newcommand{\bH}{\mathbf{h}}
\newcommand{\bw}{\mathbf{w}}
\newcommand{\ua}{\underline{a}}
\newcommand{\ub}{\underline{b}}
\newcommand{\R}{\mathbb{R}}
\newcommand{\cF}{\mathcal F}
\newcommand{\E}{\mathbb E}
\newcommand{\oL}{\ltimes}
\newcommand{\oR}{\rtimes}
\begin{document}
	
	\baselineskip=17pt
	
	
	\title[Flows in near algebras]{Flows in near algebras \\ with applications to harnesses}

	\author{W\l odzimierz Bryc}
	\address{
		Department of Mathematical Sciences,
		University of Cincinnati,
		Cincinnati, OH 45221--0025, USA}
	\email{Wlodzimierz.Bryc@UC.edu}
	
	\author{Jacek Weso\l owski}
	\address{ Faculty of Mathematics and Information Science
		Warsaw University of Technology,
		Warszawa, Poland}
	\email{wesolo@mini.pw.edu.pl}

	\author{Agnieszka Zi\c{e}ba}
	\address{ Faculty of Mathematics and Information Science
		Warsaw University of Technology, Warszawa, Poland}
	\email{agnieszka.zieba@mini.pw.edu.pl}
	
	\date{\today}
	
	\begin{abstract}
		We introduce   one-way flows in  near algebras and two-way flows in  double near algebras with two interrelated
		multiplications. We establish parametric representations of the one-way
		and two-way flows    in terms of  a single
		element of the algebra  that we call a flow generator. We  indicate  probabilistic applications of  the  one-way flows to a
		study of polynomial stochastic processes.  We apply  our results on  the  two-way flows to harnesses and quadratic
		harnesses in probability theory,
		generalizing some previous results. %
	\end{abstract}

	\subjclass[2020]{Primary 16S99; Secondary 60G48;}
	
	\keywords{near algebras, flow equations, harness, quadratic harness, stochastic processes}

	\maketitle
	
	\section{Introduction}
	In this paper we  study near algebras and related
	algebraic structures motivated by the probabilistic concepts of  polynomial processes,  harnesses and quadratic harnesses  with martingales being their common prefiguration.
	Martingales  are  fundamental in stochastic analysis, so we first explain how  algebraic techniques arise in the study of martingale-like stochastic processes.
	
	Recall that an integrable stochastic process $(X_t)_{t\ge 0}$ defined on a probability space $(\Omega,\cF,\mathbb P)$ and adapted to a filtration $(\cF_t)_{t\ge 0}$ is a martingale if $\E(X_t|\cF_s)=X_s$ for all $0\le s\le t$. Consider a potentially wider family of stochastic processes $(X_t)_{t\ge 0}$ satisfying the condition $\E(X_t|\cF_s)=\alpha_{st}X_s+\beta_{st}$, for some non-random coefficients $\alpha_{st},\;\beta_{st}$, $0\le s\le t$. Are martingales the only examples of this family?  If other  choices of coefficients besides  $(\alpha_{st},\,\beta_{st})\equiv (1,0)$ (the martingale case) are  possible, can we describe somehow their generic form? These questions, due to the tower property of conditional expectation, can be reformulated through the system of equations
	\begin{equation}\label{toy}
		(\alpha_{su},\,\beta_{su})=(\alpha_{tu}\alpha_{st},\,\alpha_{tu}\beta_{st}+\beta_{tu})=:(\alpha_{st},\beta_{st})\boxdot(\alpha_{tu},\beta_{tu}),
	\end{equation}
	holding for $0\le s< t< u$, where $\boxdot$ denotes the  binary operation in $\R^2$ defined through the middle term in \eqref{toy}. Actually, $(\R^2,+,\boxdot)$ is a simple example of an algebraic structure known as near algebra.
	This is a toy example of a flow in the abstract near algebras, which we analyze, as a warm-up,  in Section \ref{S-warmup}.

	It appears that near algebras provide a  specially  useful base for studying  properties of a  general class of stochastic processes with polynomial conditional moments  when conditioning is with respect to the past-future filtration $(\cF_{s,u})_{0\le s<u}$, as it is for   harnesses and quadratic harnesses  that  we concentrate on in this paper.
	The  algebraic structure which is convenient for identification of the  polynomial  conditional moments  is a double near algebra (i.e.,  two near algebras on the same linear space with suitably related multiplications).
	
	In the simplest case, i.e.,  when $\E(X_t|\cF_{s,u})=\alpha_{tsu}X_s+\beta_{tsu}X_u$, $0\le s<t<u$,  the analog of \eqref{toy} is
	\begin{align*}
		\label{toy1}
		&(\alpha_{tru},\beta_{tru})=(\alpha_{tsu}\alpha_{sru},\,\alpha_{tsu}\beta_{sru}+\beta_{tsu})=:(\alpha_{sru},\,\beta_{sru})\boxdot_1(\alpha_{tsu},\,\beta_{tsu}),\\
		&(\alpha_{sru},\,\beta_{sru})=(\alpha_{srt}+\alpha_{tru}\beta_{srt},\,\beta_{srt}\beta_{tru})=:(\alpha_{tru},\,\beta_{tru})\boxdot_2(\alpha_{srt},\,\beta_{srt}),
	\end{align*}
	holding for $0\le r<s<t<u$, where $\boxdot_1$ and $\boxdot_2$ are  binary operations on $\R^2$.  Actually, $(\R^2,+,\boxdot_i)$, $i=1,2$,
	are two near algebras on the same real linear space $(\R^2,+)$  with interrelated multiplications. This is a toy example of  a (two-way)  flow  in the  abstract double near algebras, which we introduce and analyze  in  Section \ref{EDNA}.
	
	As we have already mentioned, near algebra is the starting point of our considerations, so let us recall the definition of this algebraic structure: %

	\begin{definition}
		Let $V$ be a set with two binary operations $+$ and $\boxdot$. We say that $(V,+,\boxdot)$ is a {\em near algebra} if
		\begin{enumerate}
			\item $(V,+)$ is a linear space over $\mathbb{R}$,
			\item multiplication $\boxdot$ is associative, i.e.
			\begin{equation}
				({\bf x}\boxdot {\bf y}) \boxdot {\bf z}={\bf x}\boxdot({\bf y}\boxdot {\bf z}) \textnormal{ for all }{\bf x},
				{\bf y},{\bf z} \in V,
				\label{lacznosc}
			\end{equation}
			\item multiplication $\boxdot$ is left-distributive   with respect to
			addition, i.e.
			\begin{equation}
				{\bf x}\boxdot({\bf y} +{\bf z})={\bf x}\boxdot {\bf y}+{\bf x} \boxdot {\bf z} \textnormal{ for all }{\bf x},
				{\bf y}, {\bf z} \in V,
				\label{ml}
			\end{equation}
			\item multiplication $\boxdot$ is left-homogeneous, i.e.,
			\begin{equation}
				{\bf x}\boxdot (\la {\bf y})=\la ({\bf x}\boxdot {\bf y}) \textnormal{ for all }{\bf x}, {\bf y} \in V
				\textnormal { for all } \la \in \mathbb{R}.
				\label{L-jednorodn}
			\end{equation}
		\end{enumerate}
	\end{definition}
	We consider only near algebras with multiplicative identity, i.e., with a $\boxdot$-neutral element  $\mathbf{e}_\boxdot
	\in V$ which satisfies ${\bf x}\boxdot \mathbf{e}_\boxdot=\mathbf{e}_\boxdot \boxdot {\bf x}={\bf x}$ for all ${\bf x}\in
	V$. We denote by ${\bf x}^{-\boxdot}\in V$ the $\boxdot$-inverse of ${\bf x}\in V$, if it exists,
	as
	the unique element in V satisfying ${\bf x}\boxdot {\bf x}^{-\boxdot}={\bf x}^{-\boxdot}\boxdot {\bf
		x}=\mathbf{e}_\boxdot$.

	Near algebras were introduced independently  by
	Yamamuro in \cite{yamamuro1965near}  to study mappings on Banach
	spaces, and  later by
	Brown   \cite{bib_BrownNearAlgebras}. They fall into general hierarchy of mappings on algebraic structures including near-rings and near-fields, see Sect. 1 of Pilz \cite{Pilz96}.
	For a recent exposition and additional references see \cite{srinivas2017near}.
	
	Our goal in this paper is to use near algebras to study mappings that arise in the theory of stochastic processes which  following
	Hammersley \cite{Hammersley:1967aa} we call  harnesses. In the simplest case harnesses are integrable processes adapted to the  past-future filtration $(\cF_{su})_{0\le s<u}$
	satisfying
	\begin{equation}\label{LR0}
		\E(\tfrac{X_u-X_t}{u-t}|\cF_{s,u})=\tfrac{X_u-X_s}{u-s},\quad 0\le s<t<u.
	\end{equation}
	Williams \cite{Williams:1973aa} analyzed
	harnesses with finite second moments and finite number of non-zero coefficients and related them to random walks;
	harnesses that allow  infinite number of coefficients were studied in \cite{matysiak2008generalized}.  Kingman
	\cite{Kingman:1985aa,Kingman:1986aa} studied somewhat paradoxical properties of harnesses in the absence of second
	moments.
	In an unpublished note Williams characterized Wiener process as the harness with continuous trajectories; several authors
	\cite{Dozzi:1981aa,Dozzi:1989aa,Zhou:1992aa,Zhuang:1988aa} extended  Williams' result to multi-parameter setting.
	
	Quadratic harnesses are square-integrable harnesses satisfying additionally
	\begin{equation}\label{QH0}
		\E(X_t^2|\cF_{s,u})=Q_{tsu}(X_s,X_u),
	\end{equation}
	where $Q_{tsu}(x,y)$ is a polynomial of degree two in variables $x$ and $y$, see, e.g., \cite{bib_BrycMatysiakWesolowski},  \cite{Bryc-Wesolowski-10} and \cite{PJS14}. %
	
	We emphasize that the concrete form of the coefficients in polynomial conditional expectations  determines  properties of the process.  For
	example, quadratic  harnesses are often uniquely determined by
	conditions  \eqref{LR0} and \eqref{QH0}.
	Moreover, harnesses and quadratic harnesses include  important  families of stochastic processes such as the Wiener, Gamma,  and the Poisson processes.  Other examples of quadratic harnesses are related  to non-commutative probability
	\cite{bib_BrycWesolowski} and to Askey-Wilson polynomials \cite{Bryc-Wesolowski-08}.

	The paper is organized as follows.  In Section \ref{S-warmup} we  analyze
	the one-way flows in near algebras.
	In  Section \ref{EDNA} we introduce double near algebras and the two-way
	flows.
	The main results of the
	paper, Theorem \ref{Tw o  h} and its converse Theorem \ref{T-flow-exists},  describe
	the two-way flow in terms of a single element  of the double near algebra   (called a flow generator).
	In Section \ref{Sect:AP} we apply abstract
	algebraic results of Sections \ref{S-warmup} and \ref{EDNA} to stochastic processes with polynomial
	conditional moments.

	\section{Flows in  near algebras - a warm-up}\label{S-warmup}
	In this section,  we shall write $\mathbf{e}=\mathbf{e}_\boxdot$ for the identity and $\bf x^{-1}=\bf x^{-\boxdot}$ for the inverse of $\bf x$.
	\begin{definition}
		We will say that $\mathbf{f}\in V$ is a {\em $\boxdot$-null element}  if  for all $\mathbf{x}\in V$ we have
		\begin{equation}
			\label{A6}
			{\bf x}\boxdot {\bf f}={\bf f}.
		\end{equation}
	\end{definition}
	Referring to $\boxdot$-null elements we will also omit the symbol of multiplication in the notation in this section.
	
	Note that  zero of $(V,+)$ is a null element: by \eqref{L-jednorodn} for all $\mathbf{x}\in V$ $${\bf x}\boxdot
	{\bf 0}={\bf x}\boxdot(0\cdot{\bf 0})=0({\bf x}\boxdot {\bf 0})={\bf 0},$$ though for ${\bf x}\neq {\bf 0}\in V$ it may
	happen (see e.g. Section 2.1) that  ${\bf 0}\boxdot {\bf x}\neq{\bf 0}.$  It is easy to see that a null element cannot be
	invertible.  In Section \ref{Ex_of_DNA} we
	consider a near algebra where all non-invertible elements are
	$\boxdot$-null.
	
	The following formulas  will be used here and in Section \ref{EDNA}.
	\begin{proposition} \label{Prop-P1} Suppose $\bf x$ is invertible, $\bf f$ is null and $\alpha,\beta,\gamma\in\RR$.
		\begin{enumerate}
			\item Then  $\bf x+\bf f$ is invertible and
			\begin{equation}\label{xf}(\bf x+ \bf f)^{-1}=(\mathbf{e}-\bf f)\boxdot \mathbf{x}^{-1}.\end{equation}
			\item  If  $\alpha \mathbf{x}+\beta \mathbf{e}$ or $\beta \mathbf{x}^{-1} +\alpha \mathbf{e}$ is invertible,   then
			$\alpha \mathbf{x}+\beta \mathbf{e}+\gamma \mathbf{f}$ is invertible and
			
			\begin{equation}
				\label{Jacek}
				\beta( \alpha \mathbf{x}+\beta \mathbf{e}+\gamma \mathbf{f})^{-1}+\alpha (\beta \mathbf{x}^{-1} +\alpha
				\mathbf{e}+\gamma\mathbf{f})^{-1}+\gamma\mathbf{f}=\mathbf{e}.
			\end{equation}
		\end{enumerate}
	\end{proposition}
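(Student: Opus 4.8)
The plan is to prove both parts by directly verifying the proposed inverse formulas, keeping scrupulous track of which side of a $\boxdot$-product a sum or scalar sits on, since only left-distributivity \eqref{ml} and left-homogeneity \eqref{L-jednorodn} are available. For part (1) I would check that $\mathbf{y}:=(\mathbf{e}-\mathbf{f})\boxdot\mathbf{x}^{-1}$ is a two-sided $\boxdot$-inverse of $\mathbf{x}+\mathbf{f}$. The two verifications are genuinely asymmetric. Checking $\mathbf{y}\boxdot(\mathbf{x}+\mathbf{f})=\mathbf{e}$ is the easy direction: the sum sits on the right, so I left-distribute to get $\mathbf{y}\boxdot\mathbf{x}+\mathbf{y}\boxdot\mathbf{f}$, collapse $\mathbf{y}\boxdot\mathbf{f}=\mathbf{f}$ by the null property \eqref{A6}, and compute $\mathbf{y}\boxdot\mathbf{x}=(\mathbf{e}-\mathbf{f})\boxdot(\mathbf{x}^{-1}\boxdot\mathbf{x})=\mathbf{e}-\mathbf{f}$ via associativity and the identity; the two pieces sum to $\mathbf{e}$.

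The reverse identity $(\mathbf{x}+\mathbf{f})\boxdot\mathbf{y}=\mathbf{e}$ is where the absence of right-distributivity bites, because now the sum $\mathbf{x}+\mathbf{f}$ is the left factor. The manoeuvre I would use is to reposition the sum via associativity: write $(\mathbf{x}+\mathbf{f})\boxdot\mathbf{y}=\big((\mathbf{x}+\mathbf{f})\boxdot(\mathbf{e}-\mathbf{f})\big)\boxdot\mathbf{x}^{-1}$, so that the sum $\mathbf{e}-\mathbf{f}$ now sits on the right where left-distributivity and left-homogeneity do apply. The null property gives $(\mathbf{x}+\mathbf{f})\boxdot\mathbf{f}=\mathbf{f}$, hence $(\mathbf{x}+\mathbf{f})\boxdot(\mathbf{e}-\mathbf{f})=\mathbf{x}$, and the product becomes $\mathbf{x}\boxdot\mathbf{x}^{-1}=\mathbf{e}$.

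For part (2), set $A=\alpha\mathbf{x}+\beta\mathbf{e}$ and $B=\beta\mathbf{x}^{-1}+\alpha\mathbf{e}$. I would first establish the intertwining relations $\mathbf{x}\boxdot B=A$ and $\mathbf{x}^{-1}\boxdot A=B$, each a one-line left-distributivity and left-homogeneity computation. Since a product of two invertibles is invertible (a short associativity check, with inverse the reversed product), these relations show $A$ and $B$ are simultaneously invertible and give $B^{-1}=A^{-1}\boxdot\mathbf{x}$; invertibility of $A+\gamma\mathbf{f}$ and $B+\gamma\mathbf{f}$ then follows from part (1), after noting $\gamma\mathbf{f}$ is again null by \eqref{L-jednorodn}. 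The core algebraic fact is the $\gamma=0$ case $\beta A^{-1}+\alpha B^{-1}=\mathbf{e}$: multiplying on the left by $A$ and left-distributing yields $\beta\mathbf{e}+\alpha(A\boxdot B^{-1})$, and $A\boxdot B^{-1}=A\boxdot(A^{-1}\boxdot\mathbf{x})=\mathbf{x}$ by associativity, so $A\boxdot(\beta A^{-1}+\alpha B^{-1})=\beta\mathbf{e}+\alpha\mathbf{x}=A$; left-cancelling the invertible $A$ gives the identity.

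Finally I would assemble \eqref{Jacek}. By part (1), $(A+\gamma\mathbf{f})^{-1}=(\mathbf{e}-\gamma\mathbf{f})\boxdot A^{-1}$ and likewise for $B$, so the left-hand side is $\beta\big((\mathbf{e}-\gamma\mathbf{f})\boxdot A^{-1}\big)+\alpha\big((\mathbf{e}-\gamma\mathbf{f})\boxdot B^{-1}\big)+\gamma\mathbf{f}$. Pulling the scalars inside by left-homogeneity and then left-distributing factors out $\mathbf{e}-\gamma\mathbf{f}$ on the left, leaving $(\mathbf{e}-\gamma\mathbf{f})\boxdot(\beta A^{-1}+\alpha B^{-1})+\gamma\mathbf{f}=(\mathbf{e}-\gamma\mathbf{f})\boxdot\mathbf{e}+\gamma\mathbf{f}=\mathbf{e}$ by the identity just proved. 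The main obstacle throughout is organizational rather than deep: every step must be arranged so that sums and scalars land on the left factor of a $\boxdot$-product, since right-distributivity and right-homogeneity are unavailable, and associativity is precisely the tool that lets me move a sum out of the forbidden left slot into the permitted right slot.
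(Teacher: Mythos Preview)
Your argument is correct. For part (i) you spell out both one-sided checks, which the paper leaves as ``straightforward''; the manoeuvre of using associativity to push the sum $\mathbf{x}+\mathbf{f}$ past $\mathbf{e}-\mathbf{f}$ so that left-distributivity applies is exactly right.

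For part (ii) you take a genuinely different route from the paper. The paper never reduces to the $\gamma=0$ case and never invokes the explicit formula \eqref{xf}; instead it writes each of the three summands on the left of \eqref{Jacek} directly as $(\alpha\mathbf{x}+\beta\mathbf{e}+\gamma\mathbf{f})^{-1}\boxdot(\cdot)$ --- the $\beta$-term via left-homogeneity and the identity, the $\alpha$-term via the factorisation $\beta\mathbf{x}^{-1}+\alpha\mathbf{e}+\gamma\mathbf{f}=\mathbf{x}^{-1}\boxdot(\alpha\mathbf{x}+\beta\mathbf{e}+\gamma\mathbf{f})$, and the $\gamma\mathbf{f}$-term via the null property --- then collapses the sum by left-distributivity in one stroke. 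Your approach instead peels off the $\gamma\mathbf{f}$ perturbation using the explicit inverse formula from part (i), isolates the clean identity $\beta A^{-1}+\alpha B^{-1}=\mathbf{e}$, and reassembles. The paper's route is shorter and uses part (i) only for the invertibility conclusion; yours is more modular, making the role of \eqref{xf} and the underlying $\gamma=0$ identity transparent, at the cost of a few extra lines.
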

	\begin{proof}  Since the formula \eqref{xf} is straightforward, we will prove only  \eqref{Jacek}.
		Since $\gamma\mathbf{f}$ is a null element, see \eqref{L-jednorodn},
		it follows by (i)
		that at least one of
		$\alpha \mathbf{x}+\beta \mathbf{e}+\gamma \mathbf{f}$ and $\beta {\bf x}^{-1}+\alpha \mathbf{e} +\gamma \mathbf {f}$ is
		invertible. Since $\beta {\bf x}^{-1}+\alpha \mathbf{e} +\gamma \mathbf {f} =\mathbf{x}^{-1}\boxdot(\beta
		\mathbf{e}+\alpha\mathbf{x}+\gamma\mathbf{f})$ it follows that both are invertible.
		Using \eqref{L-jednorodn} we get
		$$\alpha (\beta {\bf x}^{-1}+\alpha \mathbf{e} +\gamma \mathbf {f} )^{-1}=(\alpha\,{\bf x}+\beta\,\mathbf{e} +\gamma\,{\bf
			f})^{-1}\boxdot(\alpha \mathbf{x}).$$
		By \eqref{A6} we also have
		$$
		\gamma\,{\bf f}=(\alpha\,{\bf x}+\beta\,\mathbf{e} +\gamma\,{\bf f})^{-1}\boxdot(\gamma\,{\bf f}).
		$$
		Plugging these two expressions  into the left hand side of \eqref{Jacek}  and using left distributivity of $\boxdot$ we
		rewrite the left hand side of \eqref{Jacek} as
		$$
		(\alpha {\bf x}+\beta \mathbf{e} +\gamma {\bf f})^{-1}\boxdot(\alpha {\bf x}+\beta \mathbf{e}+\gamma {\bf f}),
		$$
		and  the proof is complete.
	\end{proof}

	A family  $({\bf x}_{st})_{0\le s<t}$\hspace{1mm} of invertible elements of a near algebra $(V,+,\boxdot)$ that satisfies
	the  system of equations
	\begin{equation}\label{proflow}
		{\bf x}_{st}\boxdot {\bf x}_{tu}={\bf x}_{su},\quad 0\le s<t<u
	\end{equation}
	will be called a {\em one-way} flow on $[0,\infty)$ in $V$, if
	for all $ 0\le s<u$
	\begin{equation}\label{prolin}
		\tfrac{{\bf x}_{su}-\mathbf{e}}{u-s} \quad \mbox{does not depend on $u$.}
	\end{equation}

	Our goal is to show that   such  a family  is determined uniquely by a single element $\bh \in V$ which one could call  a
	flow generator.
	
	\begin{theorem}\label{protI}
		Suppose that  $({\bf x}_{st})_{0\le s<t}$\hspace{1mm} is a one-way flow in $V$.
		Then  there exists  unique   $\bh\in V$ such that $\mathbf{e}+s\bh$ is $\boxdot$-invertible for all  $s\ge 0$ and %
		\begin{equation}\label{qts}
			{\bf x}_{st}=(\mathbf{e}+s\bh)^{-1}\boxdot(\mathbf{e}+t\bh)%
			\qquad \mbox{ for all $0\le s<t$}.
		\end{equation}
		Conversely, if  $\bh\in V$ is such that $\mathbf{e}+s\bh$ is $\boxdot$-invertible for all  $s\ge 0$ then  $({\bf
			x}_{st})_{0\le s<t}$   given  by \eqref{qts} is a one-way flow.
	\end{theorem}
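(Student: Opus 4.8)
The plan is to produce the generator $\bh$ directly from the linearity condition \eqref{prolin} evaluated at $s=0$, and then read off the representation \eqref{qts} from the flow equation \eqref{proflow}. First I would note that \eqref{prolin} with $s=0$ says that $(\mathbf{x}_{0u}-\mathbf{e})/u$ is a single element of $V$, independent of $u$; call it $\bh$, so that $\mathbf{x}_{0s}=\mathbf{e}+s\bh$ for every $s>0$. Since every member of the flow is invertible by hypothesis, $\mathbf{e}+s\bh=\mathbf{x}_{0s}$ is invertible for all $s>0$, while $\mathbf{e}+0\cdot\bh=\mathbf{e}$ is trivially invertible; this gives the required invertibility of $\mathbf{e}+s\bh$ for all $s\ge 0$. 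Now I would apply \eqref{proflow} with the three indices $0<s<t$, namely $\mathbf{x}_{0s}\boxdot\mathbf{x}_{st}=\mathbf{x}_{0t}$, i.e. $(\mathbf{e}+s\bh)\boxdot\mathbf{x}_{st}=\mathbf{e}+t\bh$, and left-multiply by $(\mathbf{e}+s\bh)^{-1}$ to obtain \eqref{qts}; the case $s=0$ holds because $\mathbf{e}^{-1}\boxdot(\mathbf{e}+t\bh)=\mathbf{e}+t\bh=\mathbf{x}_{0t}$. Uniqueness is immediate: any admissible $\bh$ must satisfy $\mathbf{x}_{0t}=\mathbf{e}\boxdot(\mathbf{e}+t\bh)=\mathbf{e}+t\bh$, so $t\bh$, and hence $\bh$, is forced.

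For the converse I would fix $\bh$ with $\mathbf{e}+s\bh$ invertible for all $s\ge 0$, define $\mathbf{x}_{st}$ by \eqref{qts}, and verify the three requirements in the definition of a one-way flow. Invertibility of each $\mathbf{x}_{st}$ and the flow equation \eqref{proflow} both follow from associativity of $\boxdot$ alone: the $\boxdot$-product of two invertible elements is invertible, and in $\mathbf{x}_{st}\boxdot\mathbf{x}_{tu}=(\mathbf{e}+s\bh)^{-1}\boxdot(\mathbf{e}+t\bh)\boxdot(\mathbf{e}+t\bh)^{-1}\boxdot(\mathbf{e}+u\bh)$ the inner factor $(\mathbf{e}+t\bh)\boxdot(\mathbf{e}+t\bh)^{-1}$ telescopes to $\mathbf{e}$, leaving $(\mathbf{e}+s\bh)^{-1}\boxdot(\mathbf{e}+u\bh)=\mathbf{x}_{su}$.

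The one genuinely delicate point is verifying \eqref{prolin}, and here I must be careful that in a near algebra only left-distributivity \eqref{ml} and left-homogeneity \eqref{L-jednorodn} are available, never their right-sided analogues. Writing $\mathbf{b}=(\mathbf{e}+s\bh)^{-1}$, I would expand $\mathbf{x}_{su}=\mathbf{b}\boxdot(\mathbf{e}+u\bh)=\mathbf{b}+u(\mathbf{b}\boxdot\bh)$ using \eqref{ml} and \eqref{L-jednorodn} on the right factor. The same expansion applied to the defining identity $\mathbf{b}\boxdot(\mathbf{e}+s\bh)=\mathbf{e}$ gives $\mathbf{b}+s(\mathbf{b}\boxdot\bh)=\mathbf{e}$, i.e. $\mathbf{b}-\mathbf{e}=-s(\mathbf{b}\boxdot\bh)$. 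Substituting this back yields $\mathbf{x}_{su}-\mathbf{e}=(u-s)(\mathbf{b}\boxdot\bh)$, so $(\mathbf{x}_{su}-\mathbf{e})/(u-s)=(\mathbf{e}+s\bh)^{-1}\boxdot\bh$ is manifestly independent of $u$, establishing \eqref{prolin}. I expect this last computation---rather than any conceptual difficulty---to be the main thing to get right, precisely because the one-sided nature of the distributive and homogeneity laws forbids the more symmetric manipulations one would reflexively attempt.
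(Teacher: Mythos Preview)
Your proof is correct and follows essentially the same approach as the paper: both extract $\bh$ from \eqref{prolin} at $s=0$, read off \eqref{qts} from the flow equation with first index $0$, and verify \eqref{prolin} in the converse by reducing $(\mathbf{x}_{su}-\mathbf{e})/(u-s)$ to $(\mathbf{e}+s\bh)^{-1}\boxdot\bh$ using only left-distributivity and left-homogeneity. Your handling of the one-sided axioms is careful and matches the paper's one-line computation $(\mathbf{e}+s\bh)^{-1}\boxdot\bigl((\mathbf{e}+u\bh)-(\mathbf{e}+s\bh)\bigr)$.
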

	\begin{proof}We  show a slightly stronger result that \eqref{qts} follows from \eqref{proflow} and \eqref{prolin} used for
		$s=0$ only. For $t>0$, denote $\bh_t={\bf x}_{0t}$ and let $\bh_0=\mathbf{e}$. By assumption,
		$\bh_t^{-1}$ exists, so from \eqref{proflow} we see that for $0\leq t< u$ we have
		\begin{equation}\label{pre-qts}
			{\bf x}_{tu}=\bh_t^{-1}\boxdot \bh_u.
		\end{equation}
		Inserting $\bh_u$ into  \eqref{prolin}  with $s=0$, we see that there exists $\bh$ such that
		$
		\frac{1}{u}(\bh_u-\mathbf{e})=\bh
		$, i.e., $\bh_u=\mathbf{e}+u\bh$. In particular,  $\mathbf{e}+u\bh$ is invertible for all $u\geq 0$ and \eqref{qts} follows
		from \eqref{pre-qts}.
		
		To prove uniqueness, suppose \eqref{qts} holds with $\bh$ and $\bh'$, i.e., $(\mathbf{e}+s\bh)^{-1}\boxdot
		(\mathbf{e}+t\bh)=(\mathbf{e}+s\bh')^{-1}\boxdot (\mathbf{e}+t\bh')$. Taking $s=0$ gives $\bh=\bh'$.
		
		To prove the converse, it is clear that expression \eqref{qts} solves \eqref{proflow} for all $0\leq s<t<u$
		and that, as a product of invertible elements, ${\bf x}_{st}$ is invertible.
		To verify \eqref{prolin}, we write its left hand side as
		\small
		\begin{align*}
			\frac{1}{u-s}\left((\mathbf{e}+s\bh)^{-1}\boxdot(\mathbf{e}+u\bh)-\mathbf{e}\right)&=
			\frac{1}{u-s}(\mathbf{e}+s\bh)^{-1}\boxdot\left((\mathbf{e}+u\bh)-(\mathbf{e}+s\bh)\right)\\
			&=(\mathbf{e}+s\bh)^{-1}\boxdot\bh
		\end{align*}
		\normalsize
		which does not depend on $u$.
	\end{proof}

	\subsection{Flows in the near algebra of linear maps.}\label{Ex_of_DNA}
	
	Let $V$ be a linear space over $\R$. For any  ${\bf a}=(\alpha,\ua),\,{\bf b}=(\beta,\ub)\in \R\times V$ define
	\begin{equation}
		\label{QHproduct}
		{\bf a}\boxdot{\bf b}=(\beta\alpha,\,\beta \ua+\ub).
	\end{equation}
	One can check that  $\mathcal A_V:=(\R\times V,+,\boxdot)$ is a near algebra with $\mathbf e=(1,\underline 0)$.
	
	We observe that any ${\bf a}=(\alpha,\ua)\in\R\times V$ with $\alpha\neq 0$ is invertible with the inverse  %
	$${\bf a}^{-1}=(\alpha^{-1},\,-\alpha^{-1}\,\ua).$$
	Non-invertible ${\bf a}=(\alpha,\ua)\in \R\times V$  have   $\alpha=0$, so are  null, as \eqref{QHproduct} gives
	\eqref{A6}.
	
	Note that $\mathcal A_V$ can be  viewed as algebra of linear maps
	$f_{\bf a}:\R\times V^*\to \R$, where $V^*$ is the algebraic dual of $V$, ${\bf a}\in\R\times V$. For ${\bf
		a}=(\alpha,\ua)\in\R\times V$ the map $f_{\bf a}$ is  defined by
	$$f_{\bf a}(\lambda,\Lambda)=\lambda\alpha+\Lambda\,\ua,\quad (\lambda,\Lambda)\in\R\times V^*,$$  Then, for ${\bf
		a},\,{\bf b}\in \R\times V$ we define a composition $f_{\bf a}\circ f_{\bf b}$ by $$(f_{\bf a}\circ f_{\bf
		b})(\lambda,\Lambda):=f_{\bf b}(f_{\bf a}(\lambda,\Lambda),\Lambda),\quad (\lambda,\Lambda)\in \R\times V^*.$$
	This composition corresponds to operation \eqref{QHproduct} on pairs ${\bf a}, {\bf b}$, i.e.
	$$f_{\bf a}\circ f_{\bf b}=f_{{\bf a}\boxdot{\bf b}}.$$
	
	\begin{proposition}\label{corI}
		Let  ${\bf x}_{st}=\left(\xi_{st},\underline x_{st}\right)\in\R\times V$, $0\le s<t$, be such that $\xi_{st}\ne 0$
		for all $0\leq s<t$.
		Assume that \eqref{proflow} and  \eqref{prolin}  are satisfied.
		Then there exist $\alpha\ge 0$ and $\ua\in V$ such that for any $0\le s<t$
		\begin{equation}\label{22}
			{\bf x}_{st}=\left(\tfrac{1+\alpha t}{1+\alpha s},\,\tfrac{t-s}{1+\alpha s}\,\ua\right).
		\end{equation}
	\end{proposition}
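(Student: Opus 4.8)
The plan is to deduce everything from Theorem \ref{protI}. The hypothesis $\xi_{st}\ne 0$ means, by the inverse formula $\mathbf{a}^{-1}=(\alpha^{-1},-\alpha^{-1}\ua)$ recorded for $\mathcal A_V$, that every $\mathbf{x}_{st}$ is $\boxdot$-invertible. Thus $(\mathbf{x}_{st})_{0\le s<t}$ is a family of invertible elements satisfying \eqref{proflow} and \eqref{prolin}, that is, a one-way flow in the near algebra $\mathcal A_V$, and Theorem \ref{protI} furnishes a unique flow generator $\bh\in\mathcal A_V$ with $\mathbf{e}+s\bh$ invertible for all $s\ge 0$ and
\begin{equation*}
\mathbf{x}_{st}=(\mathbf{e}+s\bh)^{-1}\boxdot(\mathbf{e}+t\bh),\qquad 0\le s<t.
\end{equation*}

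Next I would pass to coordinates. Writing $\bh=(\alpha,\ua)\in\R\times V$ gives $\mathbf{e}+s\bh=(1+\alpha s,\,s\ua)$. By the characterization of invertible elements of $\mathcal A_V$, the requirement that $\mathbf{e}+s\bh$ be invertible for every $s\ge 0$ is exactly that $1+\alpha s\ne 0$ for all $s\ge 0$. Because $s$ runs over the whole half-line, this forces $\alpha\ge 0$: a negative $\alpha$ would make the first coordinate vanish at the admissible value $s=-1/\alpha>0$. The pair $(\alpha,\ua)$ produced this way will be the constants asserted in the statement.

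The final step is the explicit product. Using $\mathbf{a}^{-1}=(\alpha^{-1},-\alpha^{-1}\ua)$ we get $(\mathbf{e}+s\bh)^{-1}=\bigl(\tfrac{1}{1+\alpha s},\,-\tfrac{s}{1+\alpha s}\ua\bigr)$, and then \eqref{QHproduct} applied to this element and $\mathbf{e}+t\bh=(1+\alpha t,\,t\ua)$ yields first coordinate $\tfrac{1+\alpha t}{1+\alpha s}$ and second coordinate
\begin{equation*}
t\,\ua-\frac{s(1+\alpha t)}{1+\alpha s}\,\ua=\frac{t(1+\alpha s)-s(1+\alpha t)}{1+\alpha s}\,\ua=\frac{t-s}{1+\alpha s}\,\ua,
\end{equation*}
which is precisely \eqref{22}.

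Regarding the main obstacle: there is essentially no analytic content, as Theorem \ref{protI} carries out all the structural work and the computations are routine once \eqref{QHproduct} is in hand. The one point deserving attention is the sign constraint $\alpha\ge 0$. This does not follow from the algebra of $\mathcal A_V$ in isolation but from the flow being defined on all of $[0,\infty)$, so that the invertibility of $\mathbf{e}+s\bh$ must survive for arbitrarily large $s$; it is exactly this global-in-$s$ requirement that excludes $\alpha<0$.
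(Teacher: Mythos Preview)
Your proof is correct and follows essentially the same approach as the paper's: apply Theorem \ref{protI} with $\bh=(\alpha,\ua)$, observe that invertibility of $\mathbf{e}+s\bh$ for all $s\ge 0$ is equivalent to $\alpha\ge 0$, and compute \eqref{22}. You simply spell out the calculation that the paper leaves implicit.
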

	\begin{proof}
		We apply Theorem \ref{protI} with $\bh=(\alpha,\ua)$. It is clear
		that  ${\bf e}+s\bh$
		is invertible for all $s\geq 0$ if and only if $\alpha\geq 0$. Formula \eqref{22} follows by calculation.
	\end{proof}

	\begin{remark}\label{remR2}
		If we assume that the flow equation \eqref{proflow} and  condition \eqref{prolin} are satisfied only for $0<s<t<u$
		then, as shown in the proof of Proposition \ref{C.2.3}, additional solutions arise.

	\end{remark}

	Another example is related to endomorphisms. Let $L:=L(V)$ be a space of endomorphisms of a linear space $V$. For any
	${\bf A}=(A_1,A_2),\,{\bf B}=(B_1,B_2)\in L^2$ define
	$$
	{\bf A}\boxdot {\bf B}=(A_1B_1,\,A_2B_1+B_2).
	$$
	One can easily check that $(L^2,+,\boxdot)$ is a near algebra with $\mathbf e=(\mathrm{Id},0)$.
	Any ${\bf A}=(A_1,A_2)\in L^2$ with invertible $A_1$ has inverse of the form
	$$
	{\bf A}^{-\boxdot}=(A_1^{-1},-A_2A_1^{-1}).
	$$
	Elements of $L^2$ with $A_1=0$  are null and they form a proper subclass of non-invertible elements of this near-algebra.
	Application of Theorem \ref{protI} gives the solution of the flow equation for a family ${\bf x}_{st}$, $0\le s<t$, of
	invertible elements satisfying  condition \eqref{prolin}:
	
	There exist $G,\,H\in L$,  where $\mathrm{Id}+sH$ is invertible for all  $s\geq 0$ and
	$$
	{\bf x}_{st}=\left((\mathrm{Id}+tH)(\mathrm{Id}+sH)^{-1},\,(t-s)G(\mathrm{Id}+sH)^{-1}\right),\quad 0\le s<t.
	$$
	We will consider a related example more thoroughly, while discussing double near algebras in Section \ref{DNAofAF}.

	\section{Flows in double near algebras}\label{EDNA}
	We now introduce our  main algebraic object.
	\begin{definition}\label{Def:EDNA}
		We say that $(V,+,\ltimes,\rtimes)$ is a {\em double near algebra} (DNA) if
		\begin{enumerate}
			\item $(V,+,\ltimes)$ and $(V,+,\rtimes)$ are near algebras with neutral elements $\mathbf{e}_\ltimes$ and
			$\mathbf{e}_\rtimes$, respectively,
			\item  $\mathbf{e}_\ltimes$  is $\rtimes$-null and $\mathbf{e}_\rtimes$ is $\ltimes$-null, i.e.,
			for every ${\bf x} \in V$
			\begin{equation}
				{\bf x}\rtimes \mathbf{e}_\ltimes=\mathbf{e}_\ltimes \quad \textnormal{and} \quad {\bf x}\ltimes
				\mathbf{e}_\rtimes=\mathbf{e}_\rtimes.
				\label{mult_by_neutr}
			\end{equation}
		\end{enumerate}
	\end{definition}
	We  denote by ${\bf x}^{-\ltimes}$ and ${\bf x}^{-\rtimes}$ the inverse elements of ${\bf x}$ with respect to
	multiplications $\ltimes$
	and $\rtimes$, respectively, if they exist.

	\begin{definition}
		A family   $({\bf x}_{sru})_{0\le r\le s<u}$ \hspace{1mm}  of  $\ltimes$- and $\rtimes$-invertible elements  of
		$V$   is called a
		{\em two-way flow},
		if it satisfies the flow equations
		\begin{equation}\label{RL_mult}
			{\bf x}_{sru}\rtimes {\bf x}_{tsu}={\bf x}_{tru}, \quad
			{\bf x}_{tru}\ltimes {\bf x}_{srt}={\bf x}_{sru}, \qquad 0\leq r<s<t<u,
		\end{equation}
		and the structure condition: for all $0\le r<u$
		\begin{equation}
			\tfrac{{\bf x}_{sru}-\mathbf{e}_\ltimes}{u-s}+\tfrac{{\bf x}_{sru}-\mathbf{e}_\rtimes}{s-r}
			\mbox{ does not depend on $s\in(r,u)$.}
			\label{S-C}
		\end{equation}
		
	\end{definition}
	
	Note that the family \begin{equation}\label{triv}{\bf x}_{sru}=\tfrac{u-s}{u-r}{\bf e}_{\rtimes}+\tfrac{s-r}{u-r}{\bf
			e}_{\ltimes},\quad 0\le r<s<u,\end{equation} is a two-way flow in any DNA. To see $\ltimes$- and $\rtimes$-invertibility
	we use \eqref{xf}. Flow equations follow by straightforward calculation. The expression in  \eqref{S-C} is  zero.

	We will use the following notation. For any $\bh\in V$ and $r\geq 0$ denote
	\begin{equation}\label{hr}\bH_r =	r(1-r)\bh+(1-r)\mathbf{e}_\rtimes+r\mathbf{e}_\ltimes.\end{equation}
	
	The following result is a parametric description of all two-way flows.
	\begin{theorem}	\label{Tw o  h}
		If a family $({\bf x}_{sru})_{0\leq r<s<u}$  is a two-way flow, then there exists unique   $\bh\in V$ such that for all
		$0\le r<s<u$
		\begin{equation}	\label{xxsru_def}
			{\bf x}_{sru}
			=(\bh_u^{-\ltimes}\ltimes\bh_r)^{-\rtimes}\rtimes(\bh_u^{-\ltimes}\ltimes\bh_s).
		\end{equation}
	\end{theorem}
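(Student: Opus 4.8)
The plan is to reduce the three–index family to a single–boundary family by using the two flow equations in \eqref{RL_mult} one at a time, and then to read the generator off the structure condition \eqref{S-C}. First I would record the explicit content of \eqref{S-C}: solving that identity for $\mathbf{x}_{sru}$ shows it is equivalent to
\[
\mathbf{x}_{sru}=\tfrac{(u-s)(s-r)}{u-r}\,\mathbf{K}_{ru}+\tfrac{s-r}{u-r}\,\mathbf{e}_\ltimes+\tfrac{u-s}{u-r}\,\mathbf{e}_\rtimes
\]
for some $\mathbf{K}_{ru}\in V$ not depending on $s$; thus $\mathbf{x}_{sru}$ interpolates affinely between $\mathbf{e}_\rtimes$ at $s=r$ and $\mathbf{e}_\ltimes$ at $s=u$, plus a quadratic ``bubble''. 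I will only need this at $r=0$.

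Next comes the factorization. For fixed $u$, the first equation in \eqref{RL_mult}, rewritten with $\mathbf{g}_{rs}:=\mathbf{x}_{sru}$, is exactly the one-way flow relation $\mathbf{g}_{rs}\rtimes\mathbf{g}_{st}=\mathbf{g}_{rt}$ for $\rtimes$; taking $0$ as reference gives $\mathbf{x}_{sru}=\mathbf{x}_{r0u}^{-\rtimes}\rtimes\mathbf{x}_{s0u}$ for $0<r<s<u$. Writing $\mathbf{q}_{su}:=\mathbf{x}_{s0u}$, the second equation in \eqref{RL_mult} at $r=0$ reads $\mathbf{q}_{tu}\ltimes\mathbf{q}_{st}=\mathbf{q}_{su}$, a reversed $\ltimes$-cocycle, which one checks is solved by $\mathbf{q}_{su}=\bh_u^{-\ltimes}\ltimes\bh_s$ for a one-parameter family $\bh_v$, unique up to $\bh_v\mapsto\mathbf{a}\ltimes\bh_v$. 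Substituting back already yields \eqref{xxsru_def} for \emph{some} family $\bh_v$; I would fix the remaining freedom by normalizing $\bh_1=\mathbf{e}_\ltimes$, which forces $\bh_v=\mathbf{x}_{v01}$ for $v<1$ and $\bh_v=\mathbf{x}_{10v}^{-\ltimes}$ for $v>1$, so that each $\bh_v$ is a genuine flow element (or its inverse), hence $\ltimes$-invertible, while $\bh_0=\mathbf{e}_\rtimes$.

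It then remains to show $\bh_v$ has the quadratic shape \eqref{hr}. Multiplying $\mathbf{q}_{su}=\bh_u^{-\ltimes}\ltimes\bh_s$ on the left by $\bh_u$, inserting the $r=0$ form from the first step, and using \eqref{ml}, \eqref{L-jednorodn}, $\bh_u\ltimes\mathbf{e}_\ltimes=\bh_u$ and $\bh_u\ltimes\mathbf{e}_\rtimes=\mathbf{e}_\rtimes$ (the last by \eqref{mult_by_neutr}), I obtain
\[
\tfrac1s(\bh_s-\mathbf{e}_\rtimes)=\tfrac{u-s}{u}\,\mathbf{G}_u+\tfrac1u(\bh_u-\mathbf{e}_\rtimes),\qquad 0<s<u,
\]
with $\mathbf{G}_u=\bh_u\ltimes\mathbf{K}_{0u}$. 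Since the left side is independent of $u$, the right side must be too; matching its affine-in-$s$ coefficients across two values of $u$ forces $\mathbf{G}_u=u\,\mathbf{g}$ for a constant $\mathbf{g}$, and hence $\bh_s=\mathbf{e}_\rtimes+s\,\bh'-s^2\mathbf{g}$ for some $\bh',\mathbf{g}\in V$. The normalization $\bh_1=\mathbf{e}_\ltimes$ now gives $\bh'-\mathbf{g}=\mathbf{e}_\ltimes-\mathbf{e}_\rtimes$, so putting $\bh:=\mathbf{g}$ turns this into $\bh_v=(1-v)\mathbf{e}_\rtimes+v\mathbf{e}_\ltimes+v(1-v)\bh$, which is \eqref{hr}. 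Uniqueness is then immediate: the normalized family $\bh_v$ is determined by the flow, and $\bh$ is recovered explicitly, e.g. $\bh=4\,\mathbf{x}_{\frac12 01}-2\,\mathbf{e}_\rtimes-2\,\mathbf{e}_\ltimes$.

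The main obstacle is the gauge ambiguity in the reversed $\ltimes$-cocycle: the factorization $\mathbf{q}_{su}=\bh_u^{-\ltimes}\ltimes\bh_s$ fixes $\bh_v$ only up to left $\ltimes$-multiplication, and because $\mathbf{e}_\rtimes$ is $\ltimes$-null the value $\bh_0=\mathbf{e}_\rtimes$ is gauge-invariant and cannot by itself remove this freedom. The crux is to realize that one must normalize at the interior time $v=1$, to verify that this choice is consistent (each $\bh_v$ remaining $\ltimes$-invertible because it is an honest flow element or its inverse), and to see that precisely this normalization—combined with the quadratic shape forced by \eqref{S-C}—produces the exact coefficients of \eqref{hr} and simultaneously pins down $\bh$.
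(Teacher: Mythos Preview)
Your argument is correct and follows the same overall architecture as the paper's proof: reduce to $\mathbf{w}_{su}=\mathbf{x}_{s0u}$ via the first flow equation, factor $\mathbf{w}_{su}=\bh_u^{-\ltimes}\ltimes\bh_s$ via the second flow equation with the normalization $\bh_1=\mathbf{e}_\ltimes$ (your piecewise definition of $\bh_v$ coincides with the paper's $\mathbf{v}_s$), and then use the structure condition at $r=0$ to pin down the quadratic shape \eqref{hr}. The organization of the last step differs slightly: the paper substitutes the structure-condition form of $\mathbf{x}_{t0u}$ back into the second flow equation, obtains the auxiliary relation $\tfrac{1}{t}\mathbf{x}_{t0u}\ltimes\mathbf{g}_t=\tfrac{1}{u}\mathbf{g}_u$, and from it derives a closed formula for $\mathbf{x}_{t0u}^{-\ltimes}$ before reading off $\bh_s$; you instead left-multiply the already-established factorization $\bh_s=\bh_u\ltimes\mathbf{q}_{su}$ and compare affine-in-$s$ coefficients, which is a bit more direct. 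Your uniqueness argument (recovering $\bh$ explicitly from $\mathbf{x}_{1/2,0,1}$) is shorter than the paper's, which compares two putative generators, but both are valid.
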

	For later  reference it will be convenient to rewrite \eqref{xxsru_def} as
	\begin{equation}	\label{xsru_def}
		{\bf x}_{sru}
		=  \bw_{ru}^{-\oR} \oR \bw_{su},
	\end{equation}
	where for $0\leq r <u$
	\begin{equation}
		\bw_{ru}=\bh_u^{-\oL} \oL \bh_r.
		\label{definicjahs}
	\end{equation}
	We remark that $\bw_{st}:=\mathbf{x}_{s0t} $ solves induced flow equation $\bw_{tu}\ltimes \bw_{st}=\bw_{su}$  but
	although
	its solution \eqref{definicjahs}
	looks deceptively similar to  \eqref{qts},
	the simple argument we used to derive \eqref{qts} is not applicable here, as in our solution $\bw_{0u}=\mathbf{e}_\rtimes$
	is not $\ltimes$-invertible.

	A computation shows that \eqref{hr} and \eqref{xxsru_def} with  $\bh=0$ give the   two-way flow \eqref{triv}.
	
	\begin{proof}
		Denote $${\bf w}_{su}:= \begin{cases}
			{\bf x}_{s0u} & \mbox{for $0<s<u,$}\\
			{\bf e}_\rtimes& \mbox{for $0=s<u.$}
		\end{cases}$$
		From the first flow equation in \eqref{RL_mult} we have
		\begin{equation}\label{rown_1_AZ}
			{\bf x}_{sru}={\bf w}_{ru}^{-\rtimes}\rtimes {\bf w}_{su}
		\end{equation}
		for all $0\leq r <s<u$.
		Furthermore, \begin{equation}\label{rown_2_AZ}
			{\bf w}_{su} = {\bf v}_u^{-\ltimes}\ltimes{\bf v}_s
		\end{equation} for all $0\le s<u$, where
		$${\bf v} _{s} := \left\{ \begin{array}{ll}
			\bw_{1s}^{-\ltimes} & \textrm{for $s>1,$}\\
			{\bf e}_\ltimes& \textrm{for $s=1,$}\\
			\bw_{s1}& \textrm{for $0<s<1,$}\\
			{\bf e}_{\rtimes}& \textrm{for $s=0,$}
		\end{array} \right.$$
		where the case $0<s<u$ follows from the second equation in \eqref{RL_mult} and the case $s=0<u$ is a simple consequence of
		\eqref{mult_by_neutr}.
		So from the above considerations it is sufficient to give only a parametrization of ${\bf x}_{s01}$ and ${\bf x}_{10s}$ for
		$s<1$ and $s>1$ respectively. Firstly, note that after a simple rewrite of \eqref{S-C}, we get that there exists
		$\bg_{u}\in V$ such that
		\begin{equation}\label{def gu}
			\bg_{u}:=\tfrac{u}{(u-t)t}{\bf x}_{t0u}-\tfrac{1}{u-t}\mathbf{e}_\ltimes-\tfrac{1}{t}\mathbf{e}_\rtimes\in V.
		\end{equation}
		Thus for all $0<t<u$ we obtain
		\begin{equation}	\label{xs0u from gu}
			{\bf x}_{t0u}=\tfrac{(u-t)t}{u}\bg_{u}+\tfrac{t}{u}\mathbf{e}_\ltimes+\tfrac{u-t}{u}\mathbf{e}_\rtimes.
		\end{equation}
		Inserting expression \eqref{xs0u from gu} for ${\bf x}_{s0t}$ and ${\bf x}_{s0u}$ into the second equation in
		(\ref{RL_mult}) as well as using (\ref{ml}), (\ref{L-jednorodn}) and (\ref{mult_by_neutr}) gives
		$$\tfrac{(t-s)s}{t} {\bf x}_{t0u}\ltimes {\bf g}_t+\tfrac{t-s}{t}{\bf e}_\rtimes+\tfrac{s}{t} {\bf x}
		_{t0u}=\tfrac{s(u-s)}{u}{\bf g}_u+\tfrac{u-s}{u}{\bf e}_\rtimes+\tfrac{s}{u}{\bf e}_\ltimes.$$
		Substituting ${\bf x}_{t0u}$ in the third component at the left hand side above according to (\ref{xs0u from gu}),
		after a simple calculation, we get $$\tfrac{1}{t}{\bf x}_{t0u}\ltimes \bg_{t}=\tfrac{1}{u}\bg_{u},\qquad 0<t<u.$$ Using
		(\ref{ml}), (\ref{L-jednorodn}), (\ref{mult_by_neutr}), (\ref{def gu}) and the assumption of   $\ltimes$-invertibility of
		${\bf x}_{t0u}$ we obtain that
		$${\bf x}_{t0u}\ltimes \left(\tfrac{1}{t}\bg_{t}-\tfrac{1}{(u-t)t}\mathbf{e}_\ltimes+\tfrac{1}{u(u-t)}{\bf
			x}_{t0u}^{-\ltimes}+\tfrac{1}{tu}\mathbf{e}_\rtimes\right)={\bf 0}\in V.$$
		Applying  ${\bf x}_{t0u}^{-\ltimes}\ltimes$ to the above (recall that ${\bf 0}$ is  $\ltimes$-null) we get
		$$
		\tfrac{1}{t}\bg_{t}-\tfrac{1}{(u-t)t}\mathbf{e}_\ltimes+\tfrac{1}{u(u-t)}{\bf
			x}_{t0u}^{-\ltimes}+\tfrac{1}{tu}\mathbf{e}_\rtimes={\bf 0}.
		$$
		Therefore, for all $0 <t<u$ we have
		\begin{equation}	\label{def-x0u-ltimes}
			{\bf x}_{t0u}^{-\ltimes}=-\tfrac{(u-t)u}{t}{\bf g}_t+\tfrac{u}{t}{\bf e}_\ltimes-\tfrac{u-t}{t}{\bf e}_\rtimes.
		\end{equation}
		Let ${\bf h}:={\bf  g}_1$. Then \eqref{xs0u from gu} gives
		$${\bf x}_{t01}=t(1-t){\bf h} + t{\bf e}_{\ltimes}+(1-t){\bf e}_\rtimes={\bf h}_t,\quad t<1,$$
		where the last equation holds due to definition of $\bf h_t$. Moreover, from \eqref{def-x0u-ltimes} we get
		$${\bf x}_{10u}^{-\ltimes}=u(1-u){\bf  h}+u{\bf e}_\ltimes+(1-u){\bf e}_\rtimes={\bf h}_u,\quad u>1.$$
		To sum up, we have shown that ${\bf v}_s={\bf h}_s$ for all $s\geq 0$. This identity through \eqref{rown_1_AZ} and
		\eqref{rown_2_AZ} gives the conclusion \eqref{xxsru_def}.
		
		To see that $\bh$ is unique, assume that there exists $\widetilde{\bh}\neq \bh$ such that plugging $\widetilde{\bh}$
		instead of $\bh$ in \eqref{hr} we obtain $\widetilde{\bh}_r$ and then through \eqref{xxsru_def} another expression for
		${\bf x}_{sru}$. Then, comparing these two expressions for ${\bf x}_{sru}$ we get
		$$
		(\widetilde{\bh}_u^{-\ltimes}\ltimes\widetilde{\bh}_r)^{-\rtimes}\rtimes(\widetilde{\bh}_u^{-\ltimes}\ltimes\widetilde{\bh}_s)=(\bh_u^{-\ltimes}\ltimes\bh_r)^{-\rtimes}\rtimes(\bh_u^{-\ltimes}\ltimes\bh_s),\quad
		0\le r<s<u.
		$$
		Since $\widetilde{\bh}_0=\bh_0={\bf e}_{\rtimes}$ the above equation with $r=0$ yields
		$$
		\widetilde{\bh}_u^{-\ltimes}\ltimes\widetilde{\bh}_s=\bh_u^{-\ltimes}\ltimes\bh_s,\quad 0<s<u,
		$$
		or equivalently
		$$\bh_u\ltimes\widetilde{\bh}_u^{-\ltimes}=\bh_s\ltimes\widetilde{\bh}_s^{-\ltimes}.$$
		Thus $\bh_u\ltimes\widetilde{\bh}_u^{-\ltimes}$ does not depend on $u$. Since $\bh_1=\widetilde{\bh}_1={\bf
			e}_{\ltimes}$ it follows that $\bh_u\ltimes\widetilde{\bh}_u^{-\ltimes}={\bf e}_\ltimes$ for all $u>0$ and consequently
		$\bh=\widetilde{\bh}$.
	\end{proof}

	\begin{remark}\label{minim}
		It follows from the proof above that to get representations \eqref{hr}, \eqref{xxsru_def}, it suffices to assume that
		\begin{enumerate}
			\item ${\bf x}_{s0u}$ is $\ltimes$- and $\rtimes$-invertible, $0<s<u$,
			\item $({\bf x}_{s0u})_{0<s<u}$  satisfies \eqref{S-C} and the second flow equation \eqref{RL_mult} for $r=0$,
			\item $({\bf x}_{sru})_{0\leq r<s<u}$  satisfies the first flow equation \eqref{RL_mult}.
		\end{enumerate}
		
		Note also that in view of (i) it follows from (iii) that ${\bf x}_{sru}$ is $\rtimes$-invertible for any $0\le r<s<u$.
	\end{remark}
	Now we will focus on a converse to Theorem \ref{Tw o  h}, i.e., we will study the following question: for which $\bh\in V$ the family given by \eqref{xxsru_def} is a two-way flow.  To do that we need some identities for the elements $\bh_r$, $r\geq0$.
	\begin{lemma}
		Let $0\le r<s<u$. The following identities hold:
		\begin{equation}\label{h_ident}
			\bh_s=\tfrac{u-s}{u-r}\bh_r+\tfrac{s-r}{u-r}\bh_u+(u-s)(s-r)\bh,
		\end{equation}
		\begin{equation}\label{h_ident1}
			ru\, \bh_s=\tfrac{u-s}{u-r}\, su\, \bh_r+\tfrac{s-r}{u-r}\, rs\, \bh_u-(u-s)(s-r){\bf e}_{\rtimes},
		\end{equation}
		\begin{equation}\label{hrstu}
			\tfrac{\bh_u}{(u-t)(u-s)(u-r)}+\tfrac{\bh_s}{(u-s)(t-s)(s-r)}=\tfrac{\bh_r}{(u-r)(t-r)(s-r)}+\tfrac{\bh_t}{(u-t)(t-s)(t-r)}.
		\end{equation}
	\end{lemma}
	\begin{proof}
		All these identities are easily obtained by direct calculations based on the coefficients in \eqref{hr}.
	\end{proof}
	We now give a partial converse to Theorem \ref{Tw o  h}, which implies a converse to Remark \ref{minim}.
	\begin{proposition}
		\label{P-left-C}
		If $\bh\in V$ is such that the right hand side of \eqref{xxsru_def} is well defined then family $({\bf x}_{sru})_{0\leq
			r<s<u}$   defined by \eqref{hr} and \eqref{xxsru_def}  	satisfies the structure condition \eqref{S-C}, is
		$\rtimes$-invertible  and
		the first flow equation \eqref{RL_mult} holds.
		In addition, ${\bf x}_{s0u}$  is $\ltimes$-invertible, and the second flow equation \eqref{RL_mult}  holds for
		$r=0<s<t<u$.
		
	\end{proposition}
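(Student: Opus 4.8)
The plan is to treat the three ``algebraic'' assertions ($\rtimes$-invertibility, the first flow equation, and the $\ltimes$-statements for $r=0$) as easy consequences of associativity, and to isolate the structure condition \eqref{S-C} as the one genuine computation. Throughout I abbreviate $\bw_{ru}=\bh_u^{-\ltimes}\ltimes\bh_r$ as in \eqref{definicjahs} and $A=\bw_{ru}^{-\rtimes}$, so that \eqref{xxsru_def} reads ${\bf x}_{sru}=A\rtimes\bw_{su}$, cf.\ \eqref{xsru_def}. The hypothesis that the right-hand side of \eqref{xxsru_def} is well defined for all admissible triples supplies exactly the inverses I need: $\bh_u^{-\ltimes}$ for every $u>0$ and $\bw_{ru}^{-\rtimes}$ for every $0\le r<u$.

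First I would dispatch the easy parts. Since $\bw_{0u}=\bh_u^{-\ltimes}\ltimes\mathbf{e}_\rtimes=\mathbf{e}_\rtimes$ by \eqref{mult_by_neutr}, one has ${\bf x}_{s0u}=\bw_{su}=\bh_u^{-\ltimes}\ltimes\bh_s$, a $\ltimes$-product of the $\ltimes$-invertible elements $\bh_u^{-\ltimes}$ and $\bh_s$, hence $\ltimes$-invertible; the second flow equation for $r=0$ then telescopes by associativity of $\ltimes$, $(\bh_u^{-\ltimes}\ltimes\bh_t)\ltimes(\bh_t^{-\ltimes}\ltimes\bh_s)=\bh_u^{-\ltimes}\ltimes\bh_s$. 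Likewise ${\bf x}_{sru}=\bw_{ru}^{-\rtimes}\rtimes\bw_{su}$ is a $\rtimes$-product of $\rtimes$-invertibles, hence $\rtimes$-invertible, and the first flow equation telescopes by associativity of $\rtimes$, $(\bw_{ru}^{-\rtimes}\rtimes\bw_{su})\rtimes(\bw_{su}^{-\rtimes}\rtimes\bw_{tu})=\bw_{ru}^{-\rtimes}\rtimes\bw_{tu}={\bf x}_{tru}$.

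The heart of the matter is \eqref{S-C}, and here the plan is to reduce ${\bf x}_{sru}$ to an explicit affine combination of three fixed elements by two successive eliminations. Expanding $\bw_{su}=\bh_u^{-\ltimes}\ltimes\bh_s$ through \eqref{hr} using \eqref{ml}, \eqref{L-jednorodn} and the rules $\bh_u^{-\ltimes}\ltimes\mathbf{e}_\ltimes=\bh_u^{-\ltimes}$, $\bh_u^{-\ltimes}\ltimes\mathbf{e}_\rtimes=\mathbf{e}_\rtimes$ gives $\bw_{su}=s(1-s)P+s\,\bh_u^{-\ltimes}+(1-s)\mathbf{e}_\rtimes$ with $P:=\bh_u^{-\ltimes}\ltimes\bh$. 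The defining relation $\bh_u^{-\ltimes}\ltimes\bh_u=\mathbf{e}_\ltimes$, expanded the same way, reads $u(1-u)P+u\,\bh_u^{-\ltimes}+(1-u)\mathbf{e}_\rtimes=\mathbf{e}_\ltimes$; solving for $\bh_u^{-\ltimes}$ and substituting collapses the display to the clean form $\bw_{su}=s(u-s)P+\tfrac su\mathbf{e}_\ltimes+\tfrac{u-s}u\mathbf{e}_\rtimes$, valid for all $0\le s<u$ (at $s=0$ it returns $\mathbf{e}_\rtimes$). I would then repeat the maneuver one level up, with $\rtimes$ in place of $\ltimes$: expanding ${\bf x}_{sru}=A\rtimes\bw_{su}$ via \eqref{ml}, \eqref{L-jednorodn} and $A\rtimes\mathbf{e}_\ltimes=\mathbf{e}_\ltimes$, $A\rtimes\mathbf{e}_\rtimes=A$, then eliminating $A$ through its defining relation $A\rtimes\bw_{ru}=\mathbf{e}_\rtimes$ (with $\bw_{ru}$ in the clean form just obtained), yields
\[
{\bf x}_{sru}=(u-s)(s-r)\,R+\tfrac{s-r}{u-r}\,\mathbf{e}_\ltimes+\tfrac{u-s}{u-r}\,\mathbf{e}_\rtimes,\qquad R:=A\rtimes P.
\]
Substituting this into \eqref{S-C}, the combined coefficient $\tfrac1{u-s}+\tfrac1{s-r}=\tfrac{u-r}{(u-s)(s-r)}$ turns the $R$-term into $(u-r)R$ and exactly annihilates the two subtracted identities, so the expression equals $(u-r)R$, which does not depend on $s$.

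The only delicate point I anticipate is the bookkeeping in these two eliminations: the elements $P=\bh_u^{-\ltimes}\ltimes\bh$, $\bh_u^{-\ltimes}$, $A$ and $R=A\rtimes P$ must be carried as opaque symbols, never computed, so that it is precisely the inverse relations $\bh_u^{-\ltimes}\ltimes\bh_u=\mathbf{e}_\ltimes$ and $A\rtimes\bw_{ru}=\mathbf{e}_\rtimes$ that produce the cancellations. Everything beyond this is associativity together with the identity and null rules \eqref{mult_by_neutr}.
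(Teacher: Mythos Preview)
Your proof is correct and follows essentially the same approach as the paper: the invertibility and flow-equation claims are handled identically via associativity, and the structure-condition computation lands on the same final expression $(u-r)\,\bw_{ru}^{-\rtimes}\rtimes(\bh_u^{-\ltimes}\ltimes\bh)$. The only organizational difference is that the paper reaches this in one step by factoring out $\bw_{ru}^{-\rtimes}\rtimes$ and $\bh_u^{-\ltimes}\ltimes$ and then invoking the linear identity $\bh_s=\tfrac{u-s}{u-r}\bh_r+\tfrac{s-r}{u-r}\bh_u+(u-s)(s-r)\bh$, whereas your two successive eliminations amount to deriving that identity on the fly; the explicit intermediate formula ${\bf x}_{sru}=(u-s)(s-r)R+\tfrac{s-r}{u-r}\mathbf{e}_\ltimes+\tfrac{u-s}{u-r}\mathbf{e}_\rtimes$ you obtain is exactly the paper's later equation \eqref{cotojestx_sru}.
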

	\begin{proof}
		As a product \eqref{xxsru_def} of $\rtimes$-invertible elements, ${\bf x}_{sru}
		$ is  $\rtimes$-invertible for $r\geq 0$. Furthermore, by associativity of $\rtimes$ the first equation of \eqref{RL_mult}
		holds. Moreover,  \eqref{xsru_def} gives ${\bf x}_{s0u}={\bf w}_{su}$, which is  $\ltimes$-invertible from
		\eqref{definicjahs}, and associativity of $\ltimes$ gives the second equation of \eqref{RL_mult} for $r=0$. To verify
		structure condition \eqref{S-C} note that
		\begin{equation*}
			\begin{split}
				\tfrac{{\bf x}_{sru}-\mathbf{e}_\ltimes}{u-s}+\tfrac{{\bf x}_{sru}-\mathbf{e}_\rtimes}{s-r}&={\bf
					w}_{ru}^{-\rtimes}\rtimes\big(\tfrac{u-r}{(u-s)(s-r)}{\bf w}_{su}-\tfrac{1}{u-s}{\bf e}_{\ltimes}-\tfrac{1}{s-r}{\bf
					w}_{ru}\big)\\
				&={\bf w}_{ru}^{-\rtimes}\rtimes\big\{{\bf h}_{u}^{-\ltimes}\ltimes\big[\tfrac{u-r}{(u-s)(s-r)}{\bf
					h}_{s}-\tfrac{1}{u-s}{\bf h}_{u}-\tfrac{1}{s-r}{\bf h}_{r}\big]\big\}\\
				&={\bf w}_{ru}^{-\rtimes}\rtimes\big\{{\bf h}_{u}^{-\ltimes}\ltimes((u-r){\bf h})\big\},
			\end{split}
		\end{equation*}
		where the last equality follows from  \eqref{h_ident}. Thus \eqref{S-C} is satisfied for all $0\leq r<s<u$.
	\end{proof}

	As Example \ref{Kontrprzyklad} shows, full converse of Theorem \ref{Tw o  h} requires additional assumptions.  We introduce
	the following concept.
	\begin{definition} \label{def-gen}
		We   say that $\bh\in V$ is  a {\em flow generator}
		if
		$\bH_r$ in \eqref{hr} is $\ltimes$-invertible for every $r>0$ and
		defines by \eqref{definicjahs} the $\rtimes$-invertible family $({\bf w}_{ru})_{0<r<u}$
		satisfying for all $0<r<s<u$
		\begin{equation}\label{generator}
			\bw_{ru}^{-\oR} \oR \bw_{su}=(\bw_{rs}^{-\oR} \oR \bw_{su}^{-\oL})^{-\oL}\,.
		\end{equation}	
		(In particular, we assume that  the inverse on the right hand side of \eqref{generator}   exists.)	
	\end{definition}
	
	Since this definition is  rather cumbersome, we give a somewhat simpler  sufficient condition.
	\begin{proposition}\label{P-suff}
		Suppose that $\bh\in V$  is such that expression (\ref{definicjahs}) is well defined and the corresponding   family  $({\bf
			w}_{ru})_{0<r<u}$     is such that   for all $0<r<u$ expression
		\begin{equation} \label{sufficiency}
			\tfrac{1}{u(u-r)}\big({\bf
				w}_{ru}^{-\rtimes}\big)^{-\ltimes}+\tfrac{1}{ru}\mathbf{e}_\ltimes-\tfrac{1}{r(u-r)}\mathbf{e}_\rtimes
		\end{equation}
		is well defined and does not depend on $u$. Then $\bh$ is a flow generator. %
	\end{proposition}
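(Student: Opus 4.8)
The plan is to verify the three requirements in Definition~\ref{def-gen}. The first two --- that $\bh_u$ is $\ltimes$-invertible for every $u>0$ and that the family $(\bw_{ru})_{0<r<u}$ is $\rtimes$-invertible --- come for free: the hypothesis that \eqref{definicjahs} is well defined gives the former, while well-definedness of \eqref{sufficiency} forces $\bw_{ru}$ to be $\rtimes$-invertible and $\bw_{ru}^{-\rtimes}$ to be $\ltimes$-invertible. So the entire content is the identity \eqref{generator}, and that is what I would spend the proof on.

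My first reduction exploits the $\ltimes$-multiplicative structure of \eqref{definicjahs}. Since $\bw_{su}=\bh_u^{-\ltimes}\ltimes\bh_s$ is a $\ltimes$-product of $\ltimes$-invertible elements, its $\ltimes$-inverse factors cleanly as $\bw_{su}^{-\ltimes}=\bh_s^{-\ltimes}\ltimes\bh_u$, which strips one layer of inverses off the right-hand side of \eqref{generator}. I would also record that, writing $\mathbf{x}_{sru}:=\bw_{ru}^{-\rtimes}\rtimes\bw_{su}$ as in \eqref{xsru_def}, the identity \eqref{generator} is equivalent, after a $\ltimes$-inversion, to $\mathbf{x}_{sru}^{-\ltimes}=\bw_{rs}^{-\rtimes}\rtimes\bw_{su}^{-\ltimes}$, i.e. to the assertion that $\mathbf{x}_{sru}$ is $\ltimes$-invertible with that value. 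This corresponds precisely to the second flow equation of \eqref{RL_mult} for general $r>0$, the one ingredient Proposition~\ref{P-left-C} leaves open.

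Next I would convert the hypothesis into an explicit formula. Writing $\mathbf{k}_r$ for the common (in $u$) value of \eqref{sufficiency}, solving yields $(\bw_{ru}^{-\rtimes})^{-\ltimes}=u(u-r)\mathbf{k}_r-\tfrac{u-r}{r}\mathbf{e}_\ltimes+\tfrac{u}{r}\mathbf{e}_\rtimes$, an affine expression in $\mathbf{e}_\ltimes,\mathbf{e}_\rtimes$. From here the null calculus of Proposition~\ref{Prop-P1} is the workhorse: because $\mathbf{e}_\rtimes$ is $\ltimes$-null and $\mathbf{e}_\ltimes$ is $\rtimes$-null by \eqref{mult_by_neutr}, each such affine combination splits into an invertible ``core'' plus a null remainder, and \eqref{xf} turns the core into the relevant $\ltimes$- or $\rtimes$-inverse in closed form. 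Applying this twice recovers $\bw_{ru}^{-\rtimes}$, and hence both sides of \eqref{generator}, as explicit combinations of $\mathbf{k}_r,\mathbf{k}_s,\mathbf{e}_\ltimes,\mathbf{e}_\rtimes$ and the generator $\bh$ (entering through \eqref{hr} and the linear relation \eqref{h_ident}).

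The verification then reduces to matching these two explicit expressions, an identity that I expect to close as a rational identity in $r,s,u$ by virtue of the $u$-independence just used, together with \eqref{ml}, \eqref{L-jednorodn} and \eqref{mult_by_neutr}. I anticipate the main obstacle to be exactly the bookkeeping of the previous step: \eqref{generator} nests a $\ltimes$-inverse outside a $\rtimes$-inverse, and since only left distributivity is available one cannot freely expand a product whose left factor is a sum. The device that tames this is the mutual nullity of the two identities --- every term carrying an $\mathbf{e}_\ltimes$ is $\rtimes$-null and every term carrying an $\mathbf{e}_\rtimes$ is $\ltimes$-null --- so that at each inversion only a one-dimensional core must be inverted via \eqref{xf} while the scalar coefficients carry the rest; once these are assembled the residual identity is elementary.
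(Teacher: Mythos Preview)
Your overall direction is right --- the first two requirements of Definition~\ref{def-gen} are immediate, the content is \eqref{generator}, and the tools are Proposition~\ref{Prop-P1} together with the linear identities among the $\bh_r$. But the specific mechanism you describe breaks down. The claim that \eqref{xf} reduces $\bw_{ru}^{-\rtimes}$ to an ``explicit combination'' with a ``one-dimensional core'' is not correct: from $(\bw_{ru}^{-\rtimes})^{-\ltimes}=u(u-r)\mathbf{k}_r-\tfrac{u-r}{r}\mathbf{e}_\ltimes+\tfrac{u}{r}\mathbf{e}_\rtimes$, formula \eqref{xf} peels off only the $\ltimes$-null summand $\tfrac{u}{r}\mathbf{e}_\rtimes$; the residual $u(u-r)\mathbf{k}_r-\tfrac{u-r}{r}\mathbf{e}_\ltimes$ still carries the unknown $\mathbf{k}_r$, neither factor is $\ltimes$-null, and its $\ltimes$-inverse cannot be evaluated further. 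You are then left with a $\rtimes$-product whose left factor is a $\ltimes$-product containing an opaque inverse, and without right distributivity there is no way to ``match scalar coefficients'' in $r,s,u$. (Incidentally, only $\mathbf{k}_r$ enters, not $\mathbf{k}_s$: both $\bw_{ru}$ and $\bw_{rs}$ share first index $r$.)

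What the paper does instead is eliminate $\mathbf{k}_r$ at once by equating \eqref{sufficiency} at the second arguments $u$ and $s$, giving $(\bw_{rs}^{-\rtimes})^{-\ltimes}$ as an affine combination of $(\bw_{ru}^{-\rtimes})^{-\ltimes}$, $\mathbf{e}_\ltimes$, $\mathbf{e}_\rtimes$. The crucial step is then \eqref{Jacek} (not \eqref{xf}), applied with $\mathbf{x}=(\bw_{ru}^{-\rtimes})^{-\ltimes}$: this trades an affine expression in $\mathbf{x}$ for one in $\mathbf{x}^{-\ltimes}=\bw_{ru}^{-\rtimes}$, after which left distributivity of $\rtimes$ applies cleanly because $\bw_{ru}^{-\rtimes}\rtimes\bw_{ru}=\mathbf{e}_\rtimes$ and $\bw_{ru}^{-\rtimes}\rtimes\mathbf{e}_\ltimes=\mathbf{e}_\ltimes$. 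The final step is recognising the resulting affine combinations of $\bw_{ru},\mathbf{e}_\ltimes,\mathbf{e}_\rtimes$ (resp.\ of $\bw_{rs},\mathbf{e}_\ltimes,\mathbf{e}_\rtimes$) as $\bw_{su}$ (resp.\ $\bw_{su}^{-\ltimes}$), which requires the variant identity \eqref{h_ident1} rather than \eqref{h_ident}. This is an identity in $V$ hinging on the affine form \eqref{hr} of the $\bh_r$, not a rational scalar identity.
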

	\begin{proof} Since \eqref{sufficiency} does not depend on $u$  for all $0<r<s<u$ we have
		$$\tfrac{1}{u(u-r)}({\bf
			w}_{ru}^{-\rtimes})^{-\ltimes}+\tfrac{1}{ru}\mathbf{e}_\ltimes-\tfrac{1}{r(u-r)}\mathbf{e}_\rtimes= \tfrac{1}{s(s-r)}({\bf
			w}_{rs}^{-\rtimes})^{-\ltimes}+\tfrac{1}{rs}\mathbf{e}_\ltimes-\tfrac{1}{r(s-r)}\mathbf{e}_\rtimes.$$
		Consequently,   solving this for $({\bf w}_{rs}^{-\rtimes})^{-\ltimes}$  by straightforward algebra we have
		$$
		{\bf w}_{rs}^{-\rtimes}=\left(\tfrac{s(s-r)}{u(u-r)}{ \left({\bf
				w}_{ru}^{-\rtimes}\right)^{-\ltimes}}-\tfrac{(u-s)(s-r)}{ru}{\bf e}_{\ltimes}+\tfrac{s(u-s)}{r(u-r)}{\bf
			e}_{\rtimes}\right)^{-\ltimes}.$$
		Using \eqref{Jacek} with ${\bf x}={\bf
			w}_{ru}^{-\rtimes}$, ${\bf e}={\bf e}_{\ltimes}$, ${\bf f}={\bf e}_{\rtimes}$, $\alpha=-\tfrac{(u-s)(s-r)}{ru}$, $\beta=\tfrac{s(s-r)}{u(u-r)}$ and $\gamma=\tfrac{s(u-s)}{r(u-r)}$ we get
		$$
		\tfrac{(u-r)(u-s)}{rs}\bw_{rs}^{-\rtimes}=\left(\tfrac{s(s-r)}{u(u-r)}{\bf e}_{\ltimes}+\tfrac{s(u-s)}{r(u-r)}{\bf
			e}_{\rtimes}-\tfrac{(u-s)(s-r)}{ru}\bw_{ru}^{-\rtimes}\right)^{-\ltimes}-\tfrac{u(u-r)}{s(s-r)}{\bf
			e}_{\ltimes}+\tfrac{u(u-s)}{r(s-r)}{\bf e}_{\rtimes}.
		$$
		So %
		\begin{align} \label{rown_3_AZ}
			&\bw_{rs}^{-\rtimes}\rtimes\left(\tfrac{(u-r)(u-s)}{rs}{\bf e}_{\rtimes}+\tfrac{u(u-r)}{s(s-r)}{\bf
				e}_{\ltimes}-\tfrac{u(u-s)}{r(s-r)}\bw_{rs}\right)\\
			&=\left(\bw_{ru}^{-\rtimes}\rtimes\left(\tfrac{s(s-r)}{u(u-r)}{\bf e}_{\ltimes}-\tfrac{(u-s)(s-r)}{ru}{\bf
				e}_{\rtimes}+\-\tfrac{s(u-s)}{r(u-r)}\bw_{ru}\right)\right)^{-\ltimes}.\nonumber
		\end{align}
		Next we note that since $\mathbf w_{rs}=\bh_s^{-\ltimes}\ltimes\bh_r$, see \eqref{definicjahs}, and ${\bf e}_{\rtimes}$
		is $\ltimes$-null, the expression in the parentheses on the left hand side of  \eqref{rown_3_AZ} can be rewritten as
		\begin{align*}
			&\tfrac{(u-r)(u-s)}{rs}{\bf e}_{\rtimes}+\tfrac{u(u-r)}{s(s-r)}{\bf e}_{\ltimes}-\tfrac{u(u-s)}{r(s-r)}\bw_{rs}\\
			&=\bh_s^{-\ltimes}\ltimes\left(\tfrac{u(u-r)}{s(s-r)}\bh_s-\tfrac{u(u-s)}{r(s-r)}\bh_r+\tfrac{(u-r)(u-s)}{rs}{\bf
				e}_{\rtimes}\right) \stackrel{\eqref{h_ident1}}{=}\bh_s^{-\ltimes}\ltimes\bh_u=\bw_{su}^{-\ltimes}.
		\end{align*}
		Analogously, we  rewrite the expression in the inner parentheses on the right hand side of \eqref{rown_3_AZ}. We get
		$$
		\tfrac{s(s-r)}{u(u-r)}{\bf e}_{\ltimes}-\tfrac{(u-s)(s-r)}{ru}{\bf e}_{\rtimes}+\-\tfrac{s(u-s)}{r(u-r)}\bw_{ru}=\bw_{su}.
		$$
		Thus \eqref{rown_3_AZ}  simplifies to
		\[
		\bw_{rs}^{-\rtimes}\rtimes\bw_{su}^{-\ltimes}=\left(\bw_{ru}^{-\rtimes}\rtimes\bw_{su}\right)^{-\ltimes}.
		\]
		Taking the $\ltimes$-inverse yields \eqref{generator}.
	\end{proof}
	
	\begin{theorem}	\label{T-flow-exists}
		Let $\bh\in V$ and let $({\bf x}_{sru})_{0\leq r<s<u}$ be  generated  by $\bh$ according to \eqref{hr} and
		\eqref{xxsru_def}. The family  $({\bf x}_{sru})_{0\leq r<s<u}$  is a two-way flow if and only if  $\bh$ is a flow
		generator.
	\end{theorem}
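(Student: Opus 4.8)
The plan is to prove the two implications separately, leaning in both cases on Proposition~\ref{P-left-C}, which already supplies ``half'' of the two-way flow axioms for any family generated by $\bh$ through \eqref{hr}--\eqref{xxsru_def}.

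I would first treat the direction ``$\bh$ a flow generator $\Rightarrow$ two-way flow''. Since $\bh$ is a flow generator, the elements $\bh_u$ are $\ltimes$-invertible and the family $\bw_{ru}$ of \eqref{definicjahs} is $\rtimes$-invertible, so the right-hand side of \eqref{xxsru_def} is well defined and Proposition~\ref{P-left-C} applies: it yields the structure condition \eqref{S-C}, $\rtimes$-invertibility of every $\mathbf{x}_{sru}$, the first flow equation in \eqref{RL_mult}, together with $\ltimes$-invertibility of $\mathbf{x}_{s0u}$ and the second flow equation of \eqref{RL_mult} for $r=0$. Only two things then remain: $\ltimes$-invertibility of $\mathbf{x}_{sru}$ for all $0\le r<s<u$, and the second flow equation for $r>0$. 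The first is immediate, since \eqref{generator} exhibits $\mathbf{x}_{sru}=(\bw_{rs}^{-\rtimes}\rtimes\bw_{su}^{-\ltimes})^{-\ltimes}$ as a $\ltimes$-inverse, the inner element being assumed invertible in Definition~\ref{def-gen}, whence $\mathbf{x}_{sru}^{-\ltimes}=\bw_{rs}^{-\rtimes}\rtimes\bw_{su}^{-\ltimes}$.

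The heart of the argument, and what I expect to be the main obstacle, is the second flow equation $\mathbf{x}_{tru}\ltimes\mathbf{x}_{srt}=\mathbf{x}_{sru}$ for $r>0$. I would take $\ltimes$-inverses and rewrite it equivalently as $\mathbf{x}_{srt}^{-\ltimes}\ltimes\mathbf{x}_{tru}^{-\ltimes}=\mathbf{x}_{sru}^{-\ltimes}$. Invoking \eqref{generator} at the triples $(r,s,t)$, $(r,t,u)$, $(r,s,u)$ turns each factor into a $\bw$-expression, and using the induced $\ltimes$-flow $\bw_{tu}\ltimes\bw_{st}=\bw_{su}$ (equivalently $\bw_{su}^{-\ltimes}=\bw_{st}^{-\ltimes}\ltimes\bw_{tu}^{-\ltimes}$) reduces the claim to the mixed identity
\[
(\bw_{rs}^{-\rtimes}\rtimes\bw_{st}^{-\ltimes})\ltimes(\bw_{rt}^{-\rtimes}\rtimes\bw_{tu}^{-\ltimes})
=\bw_{rs}^{-\rtimes}\rtimes(\bw_{st}^{-\ltimes}\ltimes\bw_{tu}^{-\ltimes}).
\]
The difficulty is precisely that a double near algebra carries no distributive law linking $\ltimes$ and $\rtimes$, so this cannot be obtained by formal bracket manipulation; condition \eqref{generator} was introduced exactly to force it, and Example~\ref{Kontrprzyklad} shows it can fail otherwise. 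To close the identity I would feed in the explicit structure $\bw_{ru}=\bh_u^{-\ltimes}\ltimes\bh_r$ together with the affine relations \eqref{h_ident}, \eqref{h_ident1} expressing $\bh_s$ through $\bh_r,\bh_u,\mathbf{e}_\rtimes$ and $\bh$, and the inverse formulas of Proposition~\ref{Prop-P1}; this is the step where one can move the $\rtimes$-factors past the $\ltimes$-product and match the two sides.

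Finally, for the converse ``two-way flow $\Rightarrow$ $\bh$ a flow generator'', Theorem~\ref{Tw o  h} already produces the unique $\bh$ realizing the flow via \eqref{hr}--\eqref{xxsru_def}, so it remains only to verify the three clauses of Definition~\ref{def-gen}. The invertibility clauses are read off the two-way flow axioms: from the construction in the proof of Theorem~\ref{Tw o  h} one has $\bh_t=\mathbf{x}_{t01}$ for $t<1$, $\bh_1=\mathbf{e}_\ltimes$ and $\bh_u=\mathbf{x}_{10u}^{-\ltimes}$ for $u>1$, all $\ltimes$-invertible, while $\bw_{ru}=\mathbf{x}_{r0u}$ is $\rtimes$-invertible. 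For \eqref{generator} I would run the computation of the previous paragraph in reverse: since $\mathbf{x}_{sru}$ is now genuinely $\ltimes$-invertible and the full second flow equation is available, $\ltimes$-inverting that equation and combining it with the induced $\ltimes$-flow and the same affine identities yields $\mathbf{x}_{sru}^{-\ltimes}=\bw_{rs}^{-\rtimes}\rtimes\bw_{su}^{-\ltimes}$, which is \eqref{generator}. In this way the second flow equation and \eqref{generator} appear as two faces of a single relation, and the equivalence of the theorem follows.
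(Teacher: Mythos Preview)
Your overall architecture is right, and the reduction of the sufficiency direction to $\ltimes$-invertibility plus the second flow equation for $r>0$ via Proposition~\ref{P-left-C} is exactly what the paper does. The $\ltimes$-invertibility step is also fine. The gap is in the step you yourself flag as ``the main obstacle'': your mixed identity
\[
(\bw_{rs}^{-\rtimes}\rtimes\bw_{st}^{-\ltimes})\ltimes(\bw_{rt}^{-\rtimes}\rtimes\bw_{tu}^{-\ltimes})
=\bw_{rs}^{-\rtimes}\rtimes(\bw_{st}^{-\ltimes}\ltimes\bw_{tu}^{-\ltimes})
\]
is precisely the kind of $\ltimes$/$\rtimes$ interchange that a DNA does not supply, and your proposed remedy (``feed in'' \eqref{h_ident}, \eqref{h_ident1} and Proposition~\ref{Prop-P1}) does not give a mechanism for moving a $\rtimes$-factor through an outer $\ltimes$-product. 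Proposition~\ref{Prop-P1} lives in a single near algebra and says nothing about mixing the two multiplications; the affine $\bh$-identities are linear, not multiplicative. So as written this step is not a proof but a hope.

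What the paper does instead is avoid the mixed identity altogether by exploiting the structure condition \eqref{S-C}, which Proposition~\ref{P-left-C} already guarantees. Rewritten as
\[
\tfrac{u-t}{u-s}\,\mathbf{x}_{sru}+\tfrac{(t-s)(s-r)}{(u-s)(u-r)}\,\mathbf{e}_\ltimes
=\tfrac{(t-s)(u-t)}{(t-r)(u-r)}\,\mathbf{e}_\rtimes+\tfrac{s-r}{t-r}\,\mathbf{x}_{tru},
\]
it expresses $\mathbf{x}_{tru}$ \emph{linearly} in $\mathbf{x}_{sru}$, $\mathbf{e}_\ltimes$, $\mathbf{e}_\rtimes$. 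Now one computes $\mathbf{x}_{sru}^{-\ltimes}\ltimes\mathbf{x}_{tru}$ using only left-distributivity of $\ltimes$ and the nullity rule $\mathbf{x}\ltimes\mathbf{e}_\rtimes=\mathbf{e}_\rtimes$; then one substitutes $\mathbf{x}_{sru}^{-\ltimes}=\bw_{rs}^{-\rtimes}\rtimes\bw_{su}^{-\ltimes}$, uses left-distributivity of $\rtimes$ and nullity $\mathbf{x}\rtimes\mathbf{e}_\ltimes=\mathbf{e}_\ltimes$ to pull out $\bw_{rs}^{-\rtimes}$, rewrites the inner piece via $\bw=\bh_u^{-\ltimes}\ltimes\bh_r$, and finishes with the linear identity \eqref{hrstu} among the $\bh$'s. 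At no point is an $\ltimes$-product applied to a genuine $\rtimes$-product; the structure condition is the linearization device that makes this possible. The necessity direction uses the same trick in the form \eqref{cotojestx_sru}. This is the missing idea in your plan.
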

	
	\begin{proof}
		{\em Sufficiency:} If $\bh\in V$ is a flow generator, then  in view of Proposition \ref{P-suff}, it remains only to verify
		$\ltimes$-invertibility and the second flow equation for $r>0$.
		We see that the left hand side of \eqref{generator}  has $\ltimes$-inverse, so (\ref{xsru_def}) implies that
		${\bf x}_{sru}$ is   $\ltimes$-invertible for $0<r<s<u$ with the inverse
		\begin{equation}\label{xsru-inv}
			{\bf x}_{sru}^{-\ltimes}={\bf w}_{rs}^{-\rtimes}\rtimes{\bf w}_{su}^{-\ltimes}.
		\end{equation}

		We rewrite the structure condition \eqref{S-C} (we know { from} Proposition \ref{P-left-C} that it holds) as
		\begin{equation}\label{rown2}
			\tfrac{u-t}{u-s}{\bf
				x}_{sru}+\tfrac{(t-s)(s-r)}{(u-s)(u-r)}\mathbf{e}_\ltimes=\tfrac{(t-s)(u-t)}{(t-r)(u-r)}\mathbf{e}_\rtimes+\tfrac{s-r}{t-r}{\bf
				x}_{tru}
		\end{equation}
		for $0<r<s<t<u$.
		Combining this with (\ref{xsru-inv}) gives
		
		\begin{equation*}
			\begin{split}
				{\bf x}_{sru}^{-\ltimes}\ltimes {\bf x}_{tru}&=\tfrac{(t-s)(t-r)}{(u-s)(u-r)}{\bf x}_{sru}^{-\ltimes}+
				\tfrac{(t-r)(u-t)}{(s-r)(u-s)}\mathbf{e}_\ltimes-\tfrac{(t-s)(u-t)}{(s-r)(u-r)}\mathbf{e}_\rtimes\\
				&={\bf w}_{rs}^{-\rtimes}\rtimes\bigg(\tfrac{(t-s)(t-r)}{(u-s)(u-r)}{\bf w}_{su}^{-\ltimes}+
				\tfrac{(t-r)(u-t)}{(s-r)(u-s)}\mathbf{e}_\ltimes-\tfrac{(t-s)(u-t)}{(s-r)(u-r)}\mathbf{w}_{rs}\bigg)\\
				&={\bf w}_{rs}^{-\rtimes}\rtimes\left[{\bf
					h}_{s}^{-\ltimes}\ltimes\left(\tfrac{(t-s)(t-r)}{(u-s)(u-r)}\bh_u+\tfrac{(t-r)(u-t)}{(s-r)(u-s)}\bh_s-\tfrac{(t-s)(u-t)}{(s-r)(u-r)}\bh_r\right)\right].
			\end{split}
		\end{equation*}
		Thus  \eqref{hrstu}  yields
		$$ {\bf x}_{sru}^{-\ltimes}\ltimes {\bf x}_{tru}={\bf w}_{rs}^{-\rtimes}\rtimes\big({\bf h}_{s}^{-\ltimes}\ltimes{\bf
			h}_{t}\big)={\bf x}_{srt}^{-\ltimes},$$
		whence the second flow equation (\ref{RL_mult}) for $r>0$ follows.
		
		{\em  Necessity:} If $({\bf x}_{sru})_{0\leq r<s<u}$ is a two-way flow, then as in the proof of Prop. \ref{P-left-C}, we
		get
		\begin{equation}\label{cotojestx_sru}
			{\bf x}_{sru}=(u-s)(s-r){\bf w}_{ru}^{-\rtimes} \rtimes ({\bf h}_u^{-\ltimes}\ltimes {\bf h})+\tfrac{s-r}{u-r}{\bf
				e}_\ltimes+\tfrac{u-s}{u-r}{\bf e}_\rtimes.
		\end{equation}
		Inserting this formula for ${\bf x}_{srt}$ into the second flow equation yields
		\begin{equation*}
			{\bf x}_{sru}=(t-s)(s-r){\bf x}_{tru}\ltimes\big\{{\bf w}_{rt}^{-\rtimes} \rtimes ({\bf h}_t^{-\ltimes}\ltimes {\bf
				h})\big\}+\tfrac{s-r}{t-r}{\bf x}_{tru} +\tfrac{t-s}{t-r}{\bf e}_\rtimes.
		\end{equation*}
		Substituting ${\bf x}_{sru}$ and the second ${\bf x}_{tru}$ above according to \eqref{cotojestx_sru}, after
		simplifications,  we obtain
		\begin{align*}{\bf x}_{tru}\ltimes\big\{{\bf w}_{rt}^{-\rtimes} \rtimes ({\bf h}_t^{-\ltimes}\ltimes {\bf h})\big\}&={\bf
				w}_{ru}^{-\rtimes} \rtimes ({\bf h}_u^{-\ltimes}\ltimes {\bf h})\\
			&=\tfrac{1}{(u-t)(t-r)}{\bf x}_{tru}-\tfrac{1}{(u-t)(u-r)}{\bf e}_\ltimes -\tfrac{1}{(t-r)(u-r)}{\bf e}_\rtimes,
		\end{align*}
		where the last equality holds due to \eqref{cotojestx_sru}. Applying ${\bf x}_{tru}^{-\ltimes}\ltimes$ to equality of the
		first and the third expression above and solving for ${\bf x}_{tru}^{-\ltimes}$ we get
		\begin{equation*}
			\begin{split}
				{\bf x}_{tru}^{-\ltimes}&=\tfrac{u-r}{t-r}{\bf e}_\ltimes-\tfrac{u-t}{t-r}{\bf e}_\rtimes-(u-t)(u-r){\bf
					w}_{rt}^{-\rtimes} \rtimes ({\bf h}_t^{-\ltimes}\ltimes {\bf h})\\
				&={\bf w}_{rt}^{-\rtimes}\rtimes \big(\tfrac{u-r}{t-r}{\bf e}_\ltimes-\tfrac{u-t}{t-r}{\bf w}_{rt}-(u-t)(u-r){\bf
					h}_t^{-\ltimes}\ltimes {\bf h}\big)\\
				&={\bf w}_{rt}^{-\rtimes}\rtimes
				\big[\bh_t^{-\ltimes}\ltimes\big(\tfrac{u-r}{t-r}\bh_t-\tfrac{u-t}{t-r}\bh_r-(u-t)(u-r)\bh\big)\big].
			\end{split}
		\end{equation*}
		Thus, by \eqref{h_ident},
		$$
		{\bf x}_{tru}^{-\ltimes}={\bf w}_{rt}^{-\rtimes}\rtimes \big({\bf h}_t^{-\ltimes}\ltimes {\bf h}_u\big)={\bf
			w}_{rt}^{-\rtimes}\rtimes{\bf w}_{tu}^{-\ltimes}.
		$$
		Recalling that ${\bf x}_{tru}={\bf w}_{ru}^{-\rtimes} \rtimes{\bf w}_{tu}$, this gives \eqref{generator}, so $\bf h$ is a
		flow generator.
	\end{proof}
	
	\subsection{Two way flows in the  DNA of linear maps}
	\label{DNAofAF}
	Let  $L=L(V)$ be a space of endomorphisms on a linear space $V$. For $A_1,A_2,B_1,B_2\in L$ we define
	$$(A_1, A_2) \ltimes (B_1, B_2) = (B_1 + A_1 B_2, A_2 B_2)$$
	and
	$$(A_1, A_2) \rtimes (B_1, B_2) = (A_1 B_1, A_2 B_1 + B_2).$$
	
	Then $\mathcal L_V:=(L^2,+,\ltimes,\rtimes)$ is a DNA with ${\bf e}_{\ltimes}=(\mathrm{0},\mathrm{Id})$ and ${\bf
		e}_{\rtimes}=(\mathrm{Id},0)$.
	Let ${\bf A}=(A_1,A_2)$ be an element of this DNA.
	When $A_2$ is invertible then ${\bf A}^{-\ltimes}=(-A_1A_2^{-1},A_2^{-1})$ exists. Similarly, when $A_1$ is invertible then
	${\bf A}^{-\rtimes}=(A_1^{-1},-A_2A_1^{-1})$.
	
	This DNA can be interpreted as the family of linear  maps  $f_{\bf A}:L^2\to L$ with suitable compositions. For ${\bf
		A}=(A_1,A_2)\in L^2$  we define
	$$
	f_{\bf A} (X,Y) =XA_1+Y A_2,\quad X,Y\in L.
	$$
	For such family of maps, $\{f_{\bf A},\,{\bf A}\in L^2\}$, there are two natural compositions
	\small
	$$(f_{\bf A}\circ_1 f_{\bf B})(X,Y):=f_{\bf B}(f_{\bf A}(X,Y),Y)\quad \mbox{and}\quad  (f_{\bf A}\circ_2 f_{\bf
		B})(X,Y):=f_{\bf B}(X,f_{\bf A}(X,Y)).$$
		\normalsize
	Note that they are directly related to the products $\rtimes$ and $\ltimes$ in $\mathcal L_V$:
	$$
	f_{\bf A}\circ_1 f_{\bf B}=f_{\bf A\rtimes\bf B}\quad \mbox{and}\quad f_{\bf A}\circ_2 f_{\bf B}=f_{\bf A\ltimes\bf B}.
	$$
	
	We now apply Theorem \ref{Tw o  h} in this setting.
	\begin{proposition}\label{P-p}
		
		The following conditions are equivalent:
		\begin{enumerate}
			\item  A family $(\mathbf{x}_{sru})_{0\leq r<s<u}$ in  $\mathcal L_V$ is a two-way flow.
			\item There exist $H,G\in L$  such that $(1-r)(1-s)H+rs G+\mathrm{Id}$ is an invertible
			element of  $L$
			for all $0\le r<s$ and
			\begin{equation}\label{sru} \mathbf{x}_{sru}=
				\left( \tfrac{u-s}{u-r} B_{ru}^{-1} B_{su}, \tfrac{s-r}{u-r} B_{ur}^{-1} B_{sr}\right),\quad 0\le r<s<u,
			\end{equation}
			with
			$B_{\alpha\beta}=(1-\alpha)(1-\beta)H+\alpha\beta H_{\beta}G H_{\beta}^{-1}+\mathrm{Id}$,
			where $H_{\beta}=(1-\beta)H+\mathrm{Id}$.
		\end{enumerate}
	\end{proposition}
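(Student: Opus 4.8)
The plan is to read the two-way flow off its flow generator and then match coordinates. By Theorem \ref{Tw o  h} together with Theorem \ref{T-flow-exists}, a family $(\mathbf x_{sru})_{0\le r<s<u}$ in $\mathcal L_V$ is a two-way flow if and only if it is produced by \eqref{hr} and \eqref{xxsru_def} from a flow generator $\mathbf h\in L^2$, which is moreover unique. I would write this generator in coordinates as $\mathbf h=(G,H)$ (labelling the two $L$-components as $G$ and $H$ is the choice that makes the final formula clean) and then simply compute. Substituting into \eqref{hr} with $\mathbf e_\rtimes=(\mathrm{Id},0)$ and $\mathbf e_\ltimes=(0,\mathrm{Id})$ gives
\[
\mathbf h_r=\bigl((1-r)(rG+\mathrm{Id}),\,rH_r\bigr),\qquad H_r:=(1-r)H+\mathrm{Id},
\]
so that $\ltimes$-invertibility of $\mathbf h_u$, needed to form \eqref{definicjahs}, is exactly invertibility of $H_u$.

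Next I would compute $\mathbf w_{ru}=\mathbf h_u^{-\ltimes}\ltimes\mathbf h_r$ from the coordinate inverse formulas of $\mathcal L_V$, writing $\mathbf w_{ru}=(W_1,W_2)$. The second component comes out immediately as $W_2=\tfrac{r}{u}H_u^{-1}H_r$; the first is the raw expression $W_1=(1-r)(rG+\mathrm{Id})-\tfrac{r(1-u)}{u}(uG+\mathrm{Id})H_u^{-1}H_r$. The key step is to recognise $W_1=\tfrac{u-r}{u}H_u^{-1}B_{ru}$, where $B_{\alpha\beta}$ is as in the statement. I would prove this by multiplying $W_1$ on the right by $H_u$ and using that $H_u$ and $H_r$ are polynomials in $H$, hence commute with each other and with $H$; this reduces the claim to the polynomial identity $u\,W_1H_u=(u-r)\widetilde B_{ru}$ with $\widetilde B_{\alpha\beta}:=(1-\alpha)(1-\beta)H+\alpha\beta G+\mathrm{Id}$, in which the noncommuting $GH$-terms cancel and the remaining $H$-, $G$- and $\mathrm{Id}$-coefficients match by inspection. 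The decisive bookkeeping observation is that, because $H_\beta$ commutes with $H$, one has $B_{\alpha\beta}=H_\beta\widetilde B_{\alpha\beta}H_\beta^{-1}$; hence $B_{\alpha\beta}$ is invertible precisely when $\widetilde B_{\alpha\beta}$ is, and since $\widetilde B_{\alpha\beta}$ is symmetric in its indices, invertibility of all the matrices $B_{ru},B_{su},B_{ur},B_{sr}$ that occur in \eqref{sru} is equivalent to the single condition in (ii), the case $r=0$ giving invertibility of $H_s$.

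With $\mathbf w_{ru}$ in hand I would form $\mathbf x_{sru}=\mathbf w_{ru}^{-\rtimes}\rtimes\mathbf w_{su}$ via \eqref{xsru_def}, where $\rtimes$-invertibility of $\mathbf w_{ru}$ is exactly invertibility of $B_{ru}$. In the product the conjugating factors $H_u^{\pm1}$ telescope: the first component collapses to $\tfrac{u-s}{u-r}B_{ru}^{-1}B_{su}$, and the second, after using the commutation of the $H_\bullet$ and the auxiliary identity $\mathrm{Id}-\widetilde B_{ru}H_r^{-1}=u\bigl((1-r)H-rG\bigr)H_r^{-1}$, collapses to $\tfrac{s-r}{u-r}B_{ur}^{-1}B_{sr}$. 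This is exactly \eqref{sru}, and it holds for whichever $\mathbf h=(G,H)$ arises, settling the formula part of both implications.

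It remains to show that $\mathbf h=(G,H)$ is a flow generator precisely under the invertibility hypothesis of (ii). For (ii)$\Rightarrow$(i) I would invoke Proposition \ref{P-suff}: using $(\mathbf w_{ru}^{-\rtimes})^{-\ltimes}=(W_2^{-1},-W_1W_2^{-1})$, a short computation evaluates \eqref{sufficiency} to the pair $-\tfrac1r H_r^{-1}H$ and $\tfrac1r\bigl((1-r)H-rG\bigr)H_r^{-1}$, both manifestly independent of $u$; hence $\mathbf h$ is a flow generator and $(\mathbf x_{sru})$ is a two-way flow. For (i)$\Rightarrow$(ii), Theorem \ref{T-flow-exists} makes $\mathbf h$ a flow generator, so by Definition \ref{def-gen} $\mathbf h_u$ is $\ltimes$-invertible and $\mathbf w_{ru}$ is $\rtimes$-invertible, i.e. $H_u$ and $B_{ru}$ (equivalently $\widetilde B_{ru}$) are invertible, which is the condition in (ii). The main obstacle throughout is the noncommutative bookkeeping in these coordinate computations: one must keep the factors $H_u^{\pm1},H_r^{\pm1}$ in the correct position and repeatedly exploit that they commute with $H$ and with one another, the payoff being the conjugation identity $B_{\alpha\beta}=H_\beta\widetilde B_{\alpha\beta}H_\beta^{-1}$ that reconciles the asymmetric formula \eqref{sru} with the symmetric invertibility hypothesis.
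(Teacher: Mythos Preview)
Your proposal is correct and follows essentially the same route as the paper. Both arguments identify the generator $\mathbf h=(G,H)$, compute $\mathbf h_r$, $\mathbf w_{ru}$ and then $\mathbf x_{sru}$ coordinate by coordinate, recognise the conjugation identity $B_{\alpha\beta}=H_\beta\,\widetilde B_{\alpha\beta}\,H_\beta^{-1}$ (the paper writes your $\widetilde B_{\alpha\beta}$ as $b_{\alpha\beta}$), and for (ii)$\Rightarrow$(i) verify Proposition~\ref{P-suff} by showing that expression~\eqref{sufficiency} is independent of $u$. The only visible difference is in the bookkeeping for the second component of $\mathbf x_{sru}$: the paper uses the three-term identity $s(u-r)b_{ru}H_s=u(s-r)b_{rs}H_u+r(u-s)b_{su}H_r$, whereas you invoke the two-term identity $\mathrm{Id}-\widetilde B_{ru}H_r^{-1}=u\bigl((1-r)H-rG\bigr)H_r^{-1}$; these are equivalent rearrangements of the same linear relation among the $b_{\alpha\beta}$'s and $H_\alpha$'s.
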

	\begin{proof}
		(i) $\Rightarrow$ (ii): Following \eqref{hr} we get
		$$
		\bh_r=r(1-r)(G,H)+(1-r)(\mathrm{Id},0)+r(0,\mathrm{Id})=((1-r)(rG+\mathrm{Id}),rH_r).
		$$
		Thus
		$$
		\bh_r^{-\ltimes}=\left(-\tfrac{1-r}{r}\,(rG+\mathrm{Id})H_r^{-1},\,\tfrac{1}{r}H_r^{-1}\right).
		$$
		
		Consequently, \eqref{definicjahs} yields
		\begin{align*}
			\bw_{ru}=\bh_u^{-\ltimes}\ltimes
			\bh_r&=\left(-\tfrac{1-u}{u}\,(uG+\mathrm{Id})H_u^{-1},\,\tfrac{1}{u}H_u^{-1}\right)\ltimes((1-r)(rG+\mathrm{Id}),rH_r)\\
			&=\left(\tfrac{u-r}{u}\,b_{ru}H_u^{-1},\,\tfrac{r}{u}H_rH_u^{-1}\right),
		\end{align*}
		for $b_{ru}=b_{ur}=(1-r)(1-u)H+ruG+\mathrm{Id}$.
		
		Thus,
		$$
		\bw_{ru}^{-\rtimes}=\left(\tfrac{u}{u-r}H_ub_{ru}^{-1},\,-\tfrac{r}{u-r}H_rb_{ru}^{-1}\right)
		$$
		and
		\begin{align}\nonumber
			\bw_{ru}^{-\rtimes}\rtimes\bw_{su}&=\left(\tfrac{u}{u-r}H_ub_{ru}^{-1},\,-\tfrac{r}{u-r}H_rb_{ru}^{-1}\right)\rtimes
			\left(\tfrac{u-s}{u}\,b_{su}H_u^{-1},\,\tfrac{s}{u}H_sH_u^{-1}\right)\\
			&=\left(\tfrac{u-s}{u-r}H_ub_{ru}^{-1}b_{su}H_u^{-1},\,\tfrac{s}{u}H_sH_u^{-1}-\tfrac{r(u-s)}{u(u-r)}\,H_rb_{ru}^{-1}b_{su}H_u^{-1}\right).\label{seco}
		\end{align}
		
		Since
		$$
		s(u-r)b_{ru}H_s=u(s-r)b_{rs}H_u+r(u-s) b_{su} H_r
		$$
		the second coordinate of \eqref{seco} can be written as $\tfrac{s-r}{u-r}H_rb_{ru}^{-1}b_{rs}H_r^{-1}$.\\
		Thus \eqref{sru} follows on noting that $H_{\beta}b_{\alpha\beta}H_{\beta}^{-1}=B_{\alpha\beta}$ is invertible in $L(V)$.
		\\
		
		(ii) $\Rightarrow$ (i): Note that $(G,H)$, for which $b_{rs}$ is invertible for all $0\leq r <s$, defines a
		family $(\bw_{ru})_{0\leq r<u}$ of $\rtimes$-invertible elements. Furthermore $\bw_{ru}^{-\rtimes}$ has $\ltimes$-inverse
		for all $0< r <u$  and
		\begin{equation*}
			\begin{split}
				\tfrac{1}{u(u-r)}(\bw_{ru}^{-\rtimes})^{-\ltimes}&+\tfrac{1}{ru}{\bf e}_{\ltimes}-\tfrac{1}{r(u-r)}{\bf
					e}_{\rtimes}\\
				&=\tfrac{1}{u(u-r)}\big(\tfrac{u}{r}H_uH_r^{-1},-\tfrac{u-r}{r}b_{ru}H_r^{-1}\big)+\tfrac{1}{ru}{\bf
					e}_{\ltimes}-\tfrac{1}{r(u-r)}{\bf e}_{\rtimes}\\
				&=\big(\tfrac{1}{r(u-r)}(H_u-H_r)H_r^{-1},\tfrac{1}{ru}(H_r-b_{ru})H_r^{-1}\big).
			\end{split}
		\end{equation*}
		Since $b_{ru}=(1-u)H_r+u(\mathrm{Id}+rG)$, then
		\begin{equation*}
			\tfrac{1}{u(u-r)}(\bw_{ru}^{-\rtimes})^{-\ltimes}+\tfrac{1}{ru}{\bf e}_{\ltimes}-\tfrac{1}{r(u-r)}{\bf
				e}_{\rtimes}=\big(-\tfrac{1}{r}H H_r^{-1},\tfrac{1}{r}\mathrm{Id}-\tfrac{1}{r}H_r^{-1}-GH_r^{-1}\big)
		\end{equation*}
		does not depend on $u$. So from Proposition %
		\ref{P-suff} we get that $(G,H)$ is a flow generator and  Theorem \ref{T-flow-exists} yields that
		$(\mathbf{x}_{sru})_{0\leq r<s<u}$ is a two-way flow.
	\end{proof}

	The following generalization of  \eqref{triv} %
	illustrates the fact that the definition of the two-way flow on $[0,\infty)$  is not symmetric with respect to $\oL$ and
	$\oR$.
	
	\begin{proposition} \label{2wayf} In a DNA, consider $\bh=\beta \mathbf{e}_\oR+\gamma\mathbf{e}_\oL$. Then $\bh$ is a flow
		generator if and only if
		$\gamma\in[-1,0]$ and $\beta+\gamma\geq 0$.
		
		Equivalently, $\left\{\mathbf{x}_{sru}\in \mbox{\rm span}\{\mathbf{e}_\oR,\mathbf{e}_\oL\}:\, 0\leq r<s<t\right\} $ is a
		two-way flow if and only if  there exist $\rho\in[0,1]$, $\alpha\geq 0$
		such that
		\begin{equation} \label{xsru-lala}
			\mathbf{x}_{sru}= \tfrac{(u-s) b(s,u)}{(u-r) b(r,u)}\mathbf{e}_\oR+\tfrac{(s-r) b(r,s)}{(u-r) b(r,u)}\mathbf{e}_\oL.
		\end{equation}
		with   $b(s,u)=\alpha su+\rho(s+u)+1-\rho$.
		
	\end{proposition}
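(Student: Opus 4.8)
The plan is to identify the relevant span with the one-dimensional instance of the DNA of linear maps of Section~\ref{DNAofAF}, and then to reduce the whole statement to an elementary calculus fact. First I would verify that $S:=\mathrm{span}\{\mathbf{e}_\oR,\mathbf{e}_\oL\}$ is a sub-DNA. Using left-distributivity \eqref{ml}, left-homogeneity \eqref{L-jednorodn}, the identity properties of $\mathbf{e}_\oL,\mathbf{e}_\oR$ and the nullity relations \eqref{mult_by_neutr}, one finds
\[
(a_1\mathbf{e}_\oR+a_2\mathbf{e}_\oL)\oL(b_1\mathbf{e}_\oR+b_2\mathbf{e}_\oL)=(b_1+a_1b_2)\mathbf{e}_\oR+a_2b_2\,\mathbf{e}_\oL,
\]
\[
(a_1\mathbf{e}_\oR+a_2\mathbf{e}_\oL)\oR(b_1\mathbf{e}_\oR+b_2\mathbf{e}_\oL)=a_1b_1\,\mathbf{e}_\oR+(a_2b_1+b_2)\mathbf{e}_\oL.
\]
Under $a_1\mathbf{e}_\oR+a_2\mathbf{e}_\oL\leftrightarrow(a_1,a_2)$ these are exactly the two products of $\mathcal L_\R$ (the case $L=L(\R)=\R$), with $\mathbf{e}_\oR\leftrightarrow(\mathrm{Id},0)$ and $\mathbf{e}_\oL\leftrightarrow(0,\mathrm{Id})$. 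The explicit $\oL$- and $\oR$-inverses computed there lie in $S$, while the remaining elements of $S$ are $\oL$- or $\oR$-null, hence non-invertible in $V$ as well; since inverses are unique, invertibility and inverses of elements of $S$ coincide in $V$ and in $S$. As flow generators and two-way flows are defined purely through the DNA operations, the problem transfers verbatim to $\mathcal L_\R$.

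Next I would apply Proposition~\ref{P-p} inside $\mathcal L_\R$, where $\bh=\beta\mathbf{e}_\oR+\gamma\mathbf{e}_\oL$ corresponds to $(G,H)=(\beta,\gamma)$. Scalars commute, so $H_\beta GH_\beta^{-1}=G$ and the quantity $B_{xy}$ of Proposition~\ref{P-p} collapses to the symmetric scalar
\[
b(x,y)=(1-x)(1-y)\gamma+xy\,\beta+1=\alpha xy+\rho(x+y)+1-\rho,\qquad \alpha:=\beta+\gamma,\ \ \rho:=-\gamma,
\]
which is precisely the $b$ of the statement. By Proposition~\ref{P-p}, read for this $(G,H)$, the generated family is a two-way flow exactly when $b(r,s)\neq0$ for all $0\le r<s$ (invertibility in the scalar case), and by Theorem~\ref{T-flow-exists} this is the flow-generator condition on $\bh$. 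Given this, formula \eqref{xsru-lala} is just \eqref{sru} transcribed in $S$: reading the two coordinates as the $\mathbf{e}_\oR$- and $\mathbf{e}_\oL$-components and using $B_{ur}=b(r,u)$, $B_{sr}=b(r,s)$ by symmetry of $b$ gives $\mathbf{x}_{sru}=\tfrac{(u-s)b(s,u)}{(u-r)b(r,u)}\mathbf{e}_\oR+\tfrac{(s-r)b(r,s)}{(u-r)b(r,u)}\mathbf{e}_\oL$.

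The remaining and most delicate step is the scalar lemma: $b(x,y)\neq0$ for all $0\le x<y$ if and only if $\alpha\ge0$ and $\rho\in[0,1]$. For sufficiency I would note that, when $\alpha\ge0$ and $\rho\in[0,1]$, every term of $b(x,y)=\alpha xy+\rho(x+y)+(1-\rho)$ is nonnegative on $0\le x<y$ and, as $y>0$, the sum is strictly positive (for $\rho=1$ use $b\ge x+y\ge y>0$; for $\rho=0$ use $b\ge1$; for $0<\rho<1$ use $b\ge\rho y+(1-\rho)>0$). For necessity I would argue by contraposition. If $\rho\notin[0,1]$, the line $y\mapsto b(0,y)=\rho y+1-\rho$ has its root $y_*=1-1/\rho$ in $(0,\infty)$, giving a zero with $x=0<y_*$. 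If $\rho\in[0,1]$ but $\alpha<0$, then $t\mapsto b(t,2t)=2\alpha t^2+3\rho t+(1-\rho)$ is strictly positive for small $t>0$ and tends to $-\infty$ as $t\to\infty$, so by the intermediate value theorem it vanishes at some $t_0>0$, i.e.\ at $0<t_0<2t_0$. Translating $\alpha\ge0$, $\rho\in[0,1]$ back through $\alpha=\beta+\gamma$ and $\rho=-\gamma$ yields $\beta+\gamma\ge0$ and $\gamma\in[-1,0]$, which is exactly the flow-generator criterion and, together with the formula above, the equivalent description \eqref{xsru-lala}. The one point genuinely needing care is the boundary and asymptotic analysis in this lemma; everything else is routine DNA bookkeeping.
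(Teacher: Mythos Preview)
Your proof is correct and follows essentially the same route as the paper: identify $\mathrm{span}\{\mathbf{e}_\oR,\mathbf{e}_\oL\}$ with $\mathcal L_{\R}$, invoke Proposition~\ref{P-p} (together with Theorem~\ref{T-flow-exists}) to reduce to the scalar non-vanishing of $b(r,s)=(1-r)(1-s)\gamma+rs\beta+1$, and then analyze that condition. The only difference is cosmetic: the paper extracts $\gamma\in[-1,0]$ from $b(0,s)>0$ and $\beta+\gamma\ge0$ from large $r$, whereas you locate explicit zeros along $b(0,y)$ and $b(t,2t)$; both arguments yield the same constraints and the same formula \eqref{xsru-lala}.
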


	\begin{proof} We note that %
		$\mbox{\rm span}\{\mathbf{e}_\oR,\mathbf{e}_\oL\}$
		with induced multiplications forms
		a sub-DNA that is isomorphic to
		$\mathcal L_{\R}$. By  Proposition \ref{P-p}, $\bh$ is a flow generator if and only if $(1-r)(1-s)\gamma+r s \beta +1\ne 0$
		for all $0\leq r<s$.  Taking $r=0$ we see that we must have $(1-s)\gamma+1>0$ for all $s>0$, i.e. $\gamma\in[-1,0]$.
		Considering now large $r$, we see that we must have $\beta+\gamma\geq 0$. Conversely, if  $\gamma\in[-1,0]$ and
		$\beta+\gamma\geq 0$ then   $(1-r)(1-s)\gamma+r s \beta +1=b(r,s)>0$ for all $s>r \geq  0$  as    $\rho=-\gamma\in[0,1]$
		and $\alpha=\beta+\gamma\geq 0$.
		Formula \eqref{sru} gives \eqref{xsru-lala}.
	\end{proof}

	\subsection{ Two way flows in the  DNA
		for quadratic harnesses} \label{QH}
	The following DNA is implicit in the study of conditional variances  in \cite{bib_BrycMatysiakWesolowski}.
	Consider a linear space $(V,+):=(\mathbb{R}^6\times \mathbb{R}^2,+)$,
	for which elements will be written in the form
	\begin{equation}
		\label{X6}
		{\mathbf{x} \choose\mathbf{u}}=\binom{
			x_1,\ldots,x_6}{u_1,u_2}.
	\end{equation}
	
	With   $\mathbf{x}=(x_1,\ldots, x_6)$,
	$\mathbf{y}~=~(y_1,\ldots, y_6)~\in~\mathbb{R}^6$ and $\mathbf{u}=(u_1,u_2)$, $\mathbf{v}=(v_1,v_2) \in \mathbb{R}^2$, we define multiplication operations $\ltimes$ and $\rtimes$
	by formulas:
	\small
	\begin{multline}
		\label{QHrtimes}	{\mathbf{x} \choose\mathbf{u}}\rtimes 	{\mathbf{y} \choose\mathbf{v}}
		:=	\\ \binom{
			x_1y_1, x_2y_1+u_1y_2, x_3y_1+u_2y_2+y_3, x_4y_1+u_1y_4, x_5y_1+u_2y_4+y_5,x_6y_1+y_6 }{
			u_1v_1,u_2v_1+v_2},
	\end{multline}
	\begin{multline}
		\label{QHltimes}	{\mathbf{x} \choose\mathbf{u}}\ltimes 	{\mathbf{y} \choose\mathbf{v}}:=\\	\binom{
			y_1+u_1y_2+x_1y_3, u_2y_2+x_2y_3, x_3y_3, x_4y_3+y_4+u_1y_5, x_5y_3+u_2y_5, x_6y_3+y_6 }{v_1+u_1v_2, u_2v_2}.
	\end{multline}
	\normalsize
	It is easy to check that $Q:=(V,+,\ltimes,\rtimes)$ is a DNA with   neutral elements
	\begin{equation}\label{eQH}\mathbf{e}_\ltimes=\binom{
			0, 0, 1, 0, 0, 0}{
			0, 1} \quad \textnormal{and} \quad \mathbf{e}_\rtimes=\binom{
			1, 0, 0, 0, 0, 0 }{
			1, 0}.
	\end{equation}
	The $\ltimes$-inverse of $\mathbf{x} \choose\mathbf{u} $ exists if and only if $x_3\neq 0$ and $u_2\neq 0$, and
	$\rtimes$-inverse
	exists if and only if  $x_1\neq 0$ and $u_1\neq 0$. These inverses are respectively equal %
	\begin{eqnarray}
		\label{QHinvL}
		{\mathbf{x} \choose\mathbf{u}}^{-\ltimes}&= &	\left( \begin{array}{c}
			\frac{x_2u_1-x_1u_2}{x_3u_2}, -\frac{x_2}{x_3u_2}, \frac{1}{x_3}, \frac{x_5u_1-x_4u_2}{x_3u_2},
			-\frac{x_5}{x_3u_2},-\frac{x_6}{x_3}  \\
			-\frac{u_1}{u_2},\frac{1}{u_2}
		\end{array} \right),\\ 	
		\label{QHinvR}
		{\mathbf{x} \choose\mathbf{u}}^{-\rtimes}&= &	\left( \begin{array}{c}
			\frac{1}{x_1}, -\frac{x_2}{x_1u_1}, \frac{x_2u_2-x_3u_1}{x_1u_1}, -\frac{x_4}{x_1u_1},
			\frac{x_4u_2-x_5u_1}{x_1u_1},-\frac{x_6}{x_1}  \\
			\frac{1}{u_1},-\frac{u_2}{u_1}
		\end{array} \right).
	\end{eqnarray}

	We remark that it is difficult to determine general conditions  for
	\begin{equation}\label{Hv.6}
		\bh=\binom{
			h_1,h_2,h_3,h_4,h_5,h_6}{
			g_1,g_2}\in Q
	\end{equation}
	to be a flow generator.
	But  the lower coordinates
	$(u_1,u_2)$ in \eqref{X6} %
	under multiplications $\ltimes$ and $\rtimes$ from Section \ref{DNAofAF}    behave like the elements of
	$\mathcal L_{\R}$, so by Proposition \ref{2wayf} a necessary condition   for $\bH$ to be a flow generator is the
	requirement that $g_2\in[-1,0]$ and $g_1+g_2\geq 0$.
	
	The following example shows that  these conditions are not sufficient and  the converse of Theorem \ref{Tw o  h} does not
	hold without additional assumptions.
	
	\begin{example}	\label{Kontrprzyklad}
		Consider $  \bh:=\binom{
			1,1,0,0,0,0}{
			1,0}\in V.$
		One can check that \eqref{xsru_def} gives a well defined  family $\{\mathbf{x}_{sru}:\;0\leq r<s<u\}$ with both inverses,
		but that the second flow equation in  (\ref{RL_mult}) fails, e.g., for   $r=1$, $s=2$, $t=3$, $u=4$.
	\end{example}

	We now apply Theorem \ref{Tw o  h} and Theorem \ref{T-flow-exists}   to   a family   of a special form.
	
	\begin{theorem}\label{Twierdzenieopodlozuprobabilistycznym}
		A family
		$$\bigg\{\mathbf{x}_{sru}=\left( \begin{array}{c}
			x_{sru}^{(1)}, \ldots, 	x_{sru}^{(6)},\\
			\frac{u-s}{u-r}, \frac{s-r}{u-r}
		\end{array} \right): 0\leq r<s<u\bigg\}$$
		is a two-way flow if and only if there exist $\alpha\ge 0$,  $\rho\in[0,1]$, $\beta\geq -2\sqrt{\alpha(1-\rho)}$, and
		$h_4,h_5,h_6\in\mathbb{R}$ such that
        	\begin{multline}\label{x123}
        x^{(1)}_{sru}=\tfrac{(u-s)c(s,u)}{(u-r)c(r,u)}, \quad   x^{(2)}_{sru}=\tfrac{ (s-r) (u-s) (2\rho-\beta)}{(u-r)c(r,u)},
        \\
        x^{(3)}_{sru}=\tfrac{(s-r)c(r,s)}{(u-r)c(r,u)},  \quad  x^{(4)}_{sru}=\tfrac{(u-s)(s-r)(h_4 u-h_5(1-u))}{(u-r)c(r,u)},\\
        x^{(5)}_{sru}=\tfrac{(u-s)(s-r)(h_5(1-r)-h_4 r)}{(u-r)c(r,u)}, \quad  x^{(6)}_{sru}=\tfrac{(u-s)(s-r)h_6}{c(r,u)},
        \end{multline}
        %
        %
		where $ c(s,u)=\alpha su+(\beta-\rho)s+\rho u+1-\rho$, $0\le s<u$.
	\end{theorem}
	Equivalently,
	\begin{equation}
		\label{HHHH}
		\bh=
		\binom{ \alpha +\beta -\rho ,2 \rho -\beta ,-\rho
			,h_4,h_5,h_6}{0,0}
	\end{equation}
	is a flow generator if and only if $\alpha\ge 0$,  $\rho\in[0,1]$, $\beta\geq -2\sqrt{\alpha(1-\rho)}$.
	
	\begin{proof}%
		,,$\Rightarrow$''     With $\bH$ as in \eqref{Hv.6}
		we get
		$$ \bH_u=(1-u)\binom{1+ uh_1 ,uh_2 ,u
			\left(h_3+\tfrac{1}{1-u}\right),uh_4 ,uh_5 ,uh_6
		}{1+ug_1 ,u
			\left(g_2+\tfrac{1}{1-u}\right)}.
		$$
		We use now Theorem \ref{Tw o  h}. Since the lower coordinates   of the flow is actually the flow in the DNA $\mathcal
		L_{\R}$ of Section \ref{DNAofAF}, we conclude from \eqref{xsru-lala}
		with %
		the appropriate parameters that the lower coordinates of ${\bf x}_{10u}$ are
		$$\tfrac{(u-1) (g_1  u+1)}{u (1-g_2  (u-1))}\quad\mbox{ and }\quad \tfrac{1}{u (1-g_2  (u-1))}$$
		for all $u>1$. However by our assumptions they are
		$(u-1)/u$ and $1/u$,
		respectively. Therefore, $g_1=g_2=0$.
		
		Denoting now $\alpha:=h_1+h_2+h_3$, $\beta:=-h_2-2h_3$, $\rho:=-h_3$ and again using \eqref{xsru_def} and
		\eqref{definicjahs} to calculate ${\bf x}_{sru}$ we see that the lower coordinates are as they should be  and the upper
		coordinates are as given in \eqref{x123}.
		
		By $\rtimes$-invertibility of $\bH_u$ we have $1-(1-u)\rho\neq 0$ for any $u>0$ and thus $\rho\in[0,1]$.
		
		Since ${\bf x}_{sru}$  is  $\ltimes$-invertible it follows that $c(s,u)\neq 0$. Consequently, by its definition it follows
		that  $c(s,u)>0$ for small $0 \leq s<u$ %
		and by continuity the inequality holds for any $0 \leq s<u$. Taking large values of $0 \leq s<u$ we conclude that
		$\alpha\ge 0$. Note that $c(s,u)$ can be rewritten as $c(s,u)=\alpha s^2+\beta s+1-\rho+(u-s)(\alpha s+\rho)$. Since the
		last term  is nonnegative and  vanishes at $u=s$, considering separately the case $\alpha=\rho=0$ and $\alpha+\rho>0$ we
		conclude that  $c(s,u)>0$ for all $0 \leq s<u$ if
		$\alpha s^2+\beta s+1-\rho\geq 0$ for all $ s \geq 0$, i.e. when  $\beta\geq -2\sqrt{\alpha(1-\rho)}$.
		
		,,$\Leftarrow$''  Under assumed constraints on the  parameters, $c(s,u)>0$ for all $0\leq s<u$, so formulas in \eqref{x123} show that $\mathbf{x}_{sru}$ are $\ltimes$-invertible and $\rtimes$-invertible for all $0\leq r<s<u$.
		A lengthy calculation  shows that flow equations \eqref{RL_mult} are satisfied for all $0\leq r<s<u$.
		To verify condition \eqref{S-C} we confirm by calculation that
		\small
		\begin{align*}
			&\tfrac{{\bf x}_{sru}-\mathbf{e}_\ltimes}{u-s}+\tfrac{{\bf x}_{sru}-\mathbf{e}_\rtimes}{s-r}=\tfrac{1}{c(r,u)}\times\\
			&
			\times\binom{
				\beta -\rho +\alpha  u , 2 \rho -\beta  , -\alpha r-\rho  ,
				h_4 u+h_5 (u-1) , h_5 (1-r) - h_4 r, h_6 (u-r)}{  0 , 0 }
		\end{align*}
		\normalsize
		does not depend on $s\in(r,u)$.
	\end{proof}
	\section{ Applications to stochastic processes}\label{Sect:AP}
	In this section we  give some probabilistic applications of the
	previous results.
	We encounter a technical difficulty that the arising flows are on $(0,\infty)$ instead of $[0,\infty)$, which we
	overcome by   replacing the time variables $r,s,t$ with $r+p,s+p,t+p$ for $p>0$. This method introduces additional
	solutions
	of flow equations that do not extend to $[0,\infty)$.
	\subsection{Harnesses}
	Let $(X_t)_{t\geq0}$ be a separable integrable stochastic process with   complete   $\sigma$-fields
	$\mathcal{F}_{r,u}:=\sigma\{X_t: t\in[0,r]\cup [u,\infty)\}$ and  linear regressions
	\begin{equation}
		\label{LR-gen}
		\mathbb E(X_s|\mathcal{F}_{r,u})=a_{sru}X_r+b_{sru}X_u,
	\end{equation}
	where coefficients $a_{sru}$ and $b_{sru}$ are deterministic and depend only on   $0\leq r<s<u$.
	Following Mansuy and Yor \cite{bib_Yor} we say that  $(X_t)_{t\geq0}$ is a {\em  harness}, if \eqref{LR-gen} holds with
	\begin{equation}
		\label{atsu-btsu}
		a_{sru}=\tfrac{u-s}{u-r}  \quad  \mbox{ and }  \quad b_{sru}=\tfrac{s-r}{u-r},
	\end{equation}
	see \eqref{LR0}.
	Here is a probabilistic application of Proposition \ref{corI} to harnesses with quadratic regressions,
	compare   \cite{CKT-2012}. %
	\begin{proposition}\label{C.2.3}
		Suppose that a square integrable process   $(X_t)_{t\geq 0}$ is a harness with moments $\mathbb E\,X_t=0$,
		$\mathbb E\,X_sX_t=s\wedge t  $, and that $X_s^2,\,X_s, 1$ are linearly independent for any $s>0$.
		Let $\mathcal{F}_t=\sigma\{X_s: s\in[0,t]\}$, and assume that for any $0<s<t$ there exist non-random $a_{ts}\ne 0$,
		$b_{ts}$ and $c_{ts}$ such that
		\begin{equation}\label{E-sq}
			\mathbb E(X_t^2|\mathcal{F}_s)=a_{ts}X_s^2+b_{ts}X_s+c_{ts}.
		\end{equation}
		\begin{enumerate}
			\item If $s\mapsto a_{ts}$ is  bounded  on $(0,t] $   for some  $t>0$, then
			there exist $b\in\mathbb R$ and $a\ge 0$ such that
			\begin{equation}\label{ab1}
				a_{ts}=\tfrac{1+ta}{1+sa},\quad b_{ts}=\tfrac{(t-s)b}{1+sa},\quad c_{ts}=\tfrac{t-s}{1+sa}.
			\end{equation}
			\item If $s\mapsto a_{ts}$ is  unbounded  on $(0,t] $   for some $t>0$, then
			there exists $b\in\R$ such that
			\begin{equation}\label{ab2}
				a_{ts}=\tfrac{t}{s}, \quad b_{ts}=\tfrac{(t-s)b}{s}, \quad c_{ts}=0.
			\end{equation}
			
		\end{enumerate}
		
	\end{proposition}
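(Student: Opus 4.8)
The plan is to recast the triple $(a_{ts},b_{ts},c_{ts})$ as a one-way flow in the near algebra $\mathcal{A}_{\mathbb{R}^2}=(\mathbb{R}\times\mathbb{R}^2,+,\boxdot)$ of Section \ref{Ex_of_DNA} and then invoke Proposition \ref{corI}. First I would record that the harness is a martingale for its natural filtration: writing the harness identity $\mathbb E(X_t\mid\mathcal F_{r,u})=\tfrac{u-t}{u-r}X_r+\tfrac{t-r}{u-r}X_u$ for $r<t<u$ and letting $u\to\infty$, the term $\tfrac{t-r}{u-r}X_u$ vanishes in $L^2$ since $\|X_u\|_2=\sqrt u$, while reverse martingale convergence (using completeness of the $\sigma$-fields, $\mathcal F_{r,u}\downarrow\mathcal F_r$) gives $\mathbb E(X_t\mid\mathcal F_r)=X_r$. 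With this in hand I would apply the tower property to $\mathbb E(X_u^2\mid\mathcal F_s)$ through $\mathcal F_t$ for $0<s<t<u$, substitute \eqref{E-sq}, and use $\mathbb E(X_t\mid\mathcal F_s)=X_s$; linear independence of $X_s^2,X_s,1$ then forces
$$a_{us}=a_{ut}a_{ts},\qquad b_{us}=a_{ut}b_{ts}+b_{ut},\qquad c_{us}=a_{ut}c_{ts}+c_{ut}.$$
These are exactly the flow equations \eqref{proflow} for $\mathbf x_{st}:=(a_{ts},(b_{ts},c_{ts}))$ under the product \eqref{QHproduct}, and $a_{ts}\ne 0$ makes each $\mathbf x_{st}$ invertible.

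The crux is establishing the regularity condition \eqref{prolin}. For this I would evaluate $\mathbb E(X_tX_u\mid\mathcal F_s)$ in two ways for $0<s<t<u$: conditioning first on $\mathcal F_t$ and using the martingale property gives $a_{ts}X_s^2+b_{ts}X_s+c_{ts}$; on the other hand, multiplying the harness identity $\mathbb E(X_t\mid\mathcal F_{s,u})=\tfrac{u-t}{u-s}X_s+\tfrac{t-s}{u-s}X_u$ by the $\mathcal F_{s,u}$-measurable $X_u$ and then conditioning on $\mathcal F_s$ gives $\tfrac{u-t}{u-s}X_s^2+\tfrac{t-s}{u-s}(a_{us}X_s^2+b_{us}X_s+c_{us})$. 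Matching coefficients yields
$$a_{ts}=\tfrac{u-t}{u-s}+\tfrac{t-s}{u-s}a_{us},\qquad b_{ts}=\tfrac{t-s}{u-s}b_{us},\qquad c_{ts}=\tfrac{t-s}{u-s}c_{us}.$$
Solving the first relation for $a_{us}$ (with $s$ and any intermediate $t$ fixed) exhibits $u\mapsto a_{us}$ as affine with value $1$ at $u=s$, and the other two relations make $b_{us},c_{us}$ linear in $u$ vanishing at $u=s$; hence $\tfrac{\mathbf x_{su}-\mathbf e}{u-s}$ is independent of $u$, which is precisely \eqref{prolin}. Taking expectations in \eqref{E-sq} also records the identity $c_{ts}=t-s\,a_{ts}$.

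Since this flow lives on $(0,\infty)$ rather than $[0,\infty)$, I would pass to the shifted family $\tilde{\mathbf x}^{(p)}_{st}:=\mathbf x_{(s+p)(t+p)}$, $0\le s<t$, for fixed $p>0$; shifting the upper index by $p$ preserves both \eqref{proflow} and \eqref{prolin}, so Proposition \ref{corI} applies and gives $a_{(t+p)(s+p)}=\tfrac{1+\alpha_p t}{1+\alpha_p s}$ with a generator $(\alpha_p,\underline a_p)$, $\alpha_p\ge0$. Undoing the shift shows that on times $\ge p$ the coefficient $a_{ts}$ has either the form $\tfrac{1+\alpha t}{1+\alpha s}$ or the degenerate form $t/s$ (the latter occurring exactly when $1-\alpha_p p=0$). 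As $p\to0$ these restrictions must patch into a single function on $(0,\infty)$; the requirement that $a_{ts}$ be finite and nonzero on all of $\{0<s<t\}$ rules out $\tfrac{1+\alpha t}{1+\alpha s}$ with $\alpha<0$ (which has a pole at $s=-1/\alpha$), leaving one global form. The boundedness hypothesis then selects the case, since $\tfrac{1+\alpha t}{1+\alpha s}$ with $\alpha\ge0$ stays bounded as $s\to0^+$ while $t/s$ blows up. Reading the generator's second component as $\underline a=(b,1)$, which is forced by $c_{ts}=t-s\,a_{ts}$, and feeding the chosen $a_{ts}$ back into the $b$- and $c$-relations yields \eqref{ab1} in the bounded case and \eqref{ab2} (with $c_{ts}=0$) in the unbounded case.

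The main obstacle I expect is this last step: Proposition \ref{corI} is stated for flows on $[0,\infty)$, whereas the probabilistic flow is defined only for $0<s<t$, and it is exactly this gap that permits the extra solution $a_{ts}=t/s$ of case (ii) to appear (cf.\ Remark \ref{remR2}). Controlling the time-shift carefully---in particular excluding the spurious $\alpha<0$ branch and showing that unboundedness of $s\mapsto a_{ts}$ as $s\to0^+$ is equivalent to the degenerate generator---is the delicate part. By contrast, deriving the flow equations and condition \eqref{prolin} is routine once the martingale property and the two-way evaluation of $\mathbb E(X_tX_u\mid\mathcal F_s)$ are in place.
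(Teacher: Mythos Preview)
Your approach is essentially identical to the paper's: the same near-algebra $\mathcal A_{\mathbb R^2}$, the same tower-property derivation of the flow equation, the same two-way computation of $\mathbb E(X_tX_u\mid\mathcal F_s)$ to obtain \eqref{prolin}, the same shift-by-$p$ trick to invoke Proposition \ref{corI}, and the same bounded/unbounded dichotomy. For the patching step you correctly flag as delicate, the paper computes the $p$-independent quantity $\tfrac{t-s\,a_{ts}}{t-s}=\tfrac{pa_p-1}{a_p(p-s)-1}$, which forces either $pa_p\ne1$ for all $p>0$ (yielding a global $a=a_p/(1-pa_p)\ge0$ and \eqref{ab1}) or $pa_p=1$ for all $p>0$ (yielding \eqref{ab2}).
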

	
	\begin{proof}
		We first observe that passing to the limit in \eqref{LR-gen} as $u\to\infty$,  see   \cite[proof of
		(4)]{bib_BrycWesolowski}, we get
		\begin{equation}
			\label{LR1}
			\mathbb E(X_t|\mathcal{F}_s)= X_s.
		\end{equation} %
		We will relate the problem to a flow in the near-algebra
		$\mathcal A_{V}$ from Section \ref{Ex_of_DNA} with $V=\RR^2$  and multiplication given in \eqref{QHproduct}.  Within this framework we will apply
		Proposition \ref{corI}.   There is however a technical difficulty caused by
		lack of linear independence for $s=0$, as
		$X_0=0$, so  we consider
		${\bf x}_{st}=(a_{ts},\,b_{ts},\,c_{ts})$ for $0<s<t<u$, i.e., we exclude $s=0$.
		Since $a_{ts}\ne 0$ by assumption, ${\bf x}_{st}$ is $\boxdot$-invertible.
		
		Note that \eqref{LR1} gives
		\begin{multline*}
			a_{us}X_s^2+b_{us}X_s+c_{us}=\mathbb E(X_u^2|\mathcal{F}_s)=\mathbb E(\mathbb E(X_u^2|\mathcal{F}_t)|\mathcal{F}_s)
			\\=a_{ut}\mathbb E(X_t^2|\mathcal{F}_s)+b_{ut}\mathbb E(X_t|\mathcal{F}_s)+c_{ut}
			=a_{ut}(a_{ts}X_s^2+b_{ts}X_s+c_{ts})+b_{ut}X_s+c_{ut}.
		\end{multline*}
		Comparing coefficients at linearly independent functions $X_s^2$, $X_s$ and 1, we see that flow equation
		\eqref{proflow} holds  for all $0<s<t<u$.
		
		Using \eqref{LR1} again, we have
		\begin{equation}
			\label{EXTU1}
			\mathbb E(X_tX_u|\mathcal{F}_s)=\mathbb E(X_t^2|\mathcal{F}_s)=a_{ts}X_s^2+b_{ts}X_s+c_{ts}.
		\end{equation}
		On the other hand,  harness property \eqref{LR-gen} with \eqref{atsu-btsu} implies
		\begin{multline}  \label{EXTU2}
			\mathbb E(X_tX_u|\mathcal{F}_s)=\mathbb E(\mathbb
			E(X_t|\mathcal{F}_{s,u})\,X_u|\mathcal{F}_s)=\tfrac{u-t}{u-s}X_s\mathbb E(X_u|\mathcal{F}_s)+\tfrac{t-s}{u-s}\mathbb
			E(X_u^2|\mathcal{F}_s)\\=\tfrac{u-t}{u-s}X_s^2
			+\tfrac{t-s}{u-s}\left(a_{us}X_s^2+b_{us}X_s+c_{us}\right).
		\end{multline}
		Comparing \eqref{EXTU1} and \eqref{EXTU2}  we obtain \eqref{prolin} for all $0<s<u$.
		We now fix $p>0$ and apply
		Proposition \ref{corI} %
		to the flow $\{\widetilde{\mathbf{x}}_{st}:0\leq s<t\}$ defined by
		$\widetilde{\mathbf{x}}_{st}= \mathbf{x}_{s+p,t+p}$
		Formula \eqref{22}
		shows that there exist constants $a_p\geq  0, b_p,c_p\in\RR$ such that
		for all $t>s>p$ we have
		\begin{equation}\label{arbrcr}
			a_{ts}=\frac{1+a_p (t-p)}{1+a_p(s-p)},\quad  b_{ts}=\frac{(t-s)b_p}{1+a_p(s-p)},\quad  c_{ts}=\frac{(t-s)c_p}{1+a_p(s-p)}.
		\end{equation}
		 Recall that the original family ${\bf x}_{st}=(a_{ts},\,b_{ts},\,c_{ts})$, $0\leq s<t$, does not depend on $p$.
		We now observe  that
		\begin{equation}
			\label{P1} \tfrac{t-s a_{ts}}{t-s} = \tfrac{p a_p-1}{a_p (p-s)-1},
		\end{equation}
		 where the left hand side does not depend on $p\in (0,s)$.
		Since we can take $s>0$ arbitrarily large this  means that we have two cases:
		\begin{itemize}
			\item Case A: $pa_p\ne 1$ for all $p>0$.
			\item Case B: $pa_p=1$ for all $p>0$.
			
		\end{itemize}
		In Case A,
		we use \eqref{P1} to write  $(t-s)/(t-s a_{ts})$ as $1+s a_p/(1-pa_p)$.
		Since this expression does not depend on $p$ when  $s>p$ is arbitrary,
		there is a constant $a$ such that $a_p/(1-p a_p)=a$ for all  $p>0$.
		We get  $a_p=a/(1+ap)$ for all small enough  $p>0$. As $a_p\geq 0$, this implies that  $a\geq 0$ and hence $a_p=a/(1+ap)$
		for all $p>0$.
		Dividing the last two equations in \eqref{arbrcr} by $t-s$ and applying \eqref{P1} we  see that  $(1+a p)  b_p=b$ and  $(1+a p)
		c_p=c$  do not depend on $p$.
		Inserting these expressions into \eqref{arbrcr} for all $0<s<t$ we get
		\begin{equation}\label{abc} a_{ts}=\tfrac{1+a t}{1+as},\quad  b_{ts}=\tfrac{(t-s)b}{1+as},\quad
			c_{ts}=\tfrac{(t-s)c}{1+as}.
		\end{equation}
		From \eqref{abc} it is clear that Case A does not arise if coefficients $a_{st}$ are unbounded for some $t$.
		
		Since $\mathbb E\,X_t^2=t$, taking expected value of \eqref{E-sq}, at some $t>s>0$ we get
		$$ t= \tfrac{1+a t}{1+as} s+ \tfrac{(t-s)c}{1+as}, $$
		which gives $c=1$. As $X_0=0$, the coefficients of the quadratic form on the right hand side of  \eqref{EXTU1}
		are given by formula \eqref{ab1}    for all $0\leq s<t$.

		In Case B we have $a_p=1/p$ for all $p>0$. From \eqref{arbrcr} we get  $a_{ts}=t/s$ for all $t>s$.
		It is therefore clear that Case B does not arise if the coefficient $a_{st}$ is bounded near $s=0$ for some $t$.
		
		Taking the expected value of both sides of \eqref{E-sq}, we get $c_{ts}=0$ for all $t>s>0$. Formula \eqref{arbrcr} gives
		$b_{ts}=\frac{(t-s)p b_p}{s}$ so  $p b_p=b$ does not depend on $p>0$ and \eqref{ab2} follows.
	\end{proof}

	\subsection{More general linear regressions}	\label{S:LR}

	In this section we are interested in a separable integrable stochastic process
	such that $\mathbb{E}X_t=0$ with the property that  for all $0<r<s<u$   we have
	
	\begin{equation}\label{Z-harness}
		\mathbb{E}\left(\left.\tfrac{X_s-X_r}{s-r}-\tfrac{X_u-X_s}{u-s}\right|\mathcal{F}_{r,u}\right)=A_{ru}X_r+B_{ru}X_u,
	\end{equation}
	where $A_{ru}$ and $B_{ru}$ are deterministic functions  of $r$ and $u$ but not of  $s$.  Note that  \eqref{Z-harness}
	defines
	a harness, see \eqref{LR-gen} and \eqref{atsu-btsu}, when
	$A_{ru}=B_{ru}\equiv 0$. %

	\begin{theorem}
		\label{tw-G-harnesik}
		Let $(X_t)_{t\geq0}$ be an integrable centered stochastic process such that \eqref{Z-harness} holds.   Suppose that
		$X_r$ and $X_s$  are  linearly independent as functions on the probability space $\Omega$ for  	$0< r<s$.
		\begin{enumerate}
			\item If $r\mapsto A_{ru}$ is  bounded  on $(0,u] $ for some  $u>0$, then
			there exists $\alpha\geq 0$ and  $\rho\in[0,1]$, such that for all $0\leq r<s<u$ equation \eqref{LR-gen} holds with
			coefficients $a_{sru}$ and $b_{sru}$  given by
			\begin{equation}\label{xsru-R2}
				a_{sru}=\tfrac{(u-s) b(s,u)}{(u-r)
					b(r,u)}, \qquad\qquad
				b_{sru}=    \tfrac{(s-r) b(r,s)}{(u-r) b(r,u)},
			\end{equation}
			with $b(s,u)$ defined in   Proposition \ref{2wayf}.
			\item  If $r\mapsto A_{ru}$ is  unbounded  on $(0,u] $ for some  $u>0$, then
			equation \eqref{LR-gen} holds for all $0< r<s<u$  with coefficients $a_{sru}$ and $b_{sru}$  given by
			\begin{equation}\label{xsru-R3}
				a_{sru}= \tfrac{s (u-s)}{r (u-r)},
				\qquad\qquad
				b_{sru}=\tfrac{s (s-r )}{u (u-r )}.
			\end{equation}
			
		\end{enumerate}
		
	\end{theorem}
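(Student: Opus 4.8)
The plan is to identify the regression coefficients with a two-way flow in the DNA $\mathcal{L}_{\R}$ of Section \ref{DNAofAF}, apply Proposition \ref{2wayf}, and deal with the restriction to $(0,\infty)$ by the time-shift device already used in Proposition \ref{C.2.3}. First I would rewrite the hypothesis \eqref{Z-harness}. Since $X_r$ and $X_u$ are $\mathcal{F}_{r,u}$-measurable, pulling them out of the conditional expectation and solving for $\mathbb{E}(X_s\mid\mathcal{F}_{r,u})$ turns \eqref{Z-harness} into the linear regression \eqref{LR-gen} with
$$a_{sru}=\tfrac{u-s}{u-r}\bigl(1+(s-r)A_{ru}\bigr),\qquad b_{sru}=\tfrac{s-r}{u-r}\bigl(1+(u-s)B_{ru}\bigr).$$
Setting $\mathbf{x}_{sru}=(a_{sru},b_{sru})\in\mathcal{L}_{\R}=\mathrm{span}\{\mathbf{e}_\rtimes,\mathbf{e}_\ltimes\}$, a short computation shows that the two coordinates of the structure expression in \eqref{S-C} equal exactly $A_{ru}$ and $B_{ru}$. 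Hence the assumption in \eqref{Z-harness} that $A_{ru},B_{ru}$ do not depend on $s$ is precisely the structure condition \eqref{S-C}; this correspondence is the conceptual heart of the argument and explains why \eqref{Z-harness} is the right probabilistic avatar of a two-way flow.

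Next I would produce the two flow equations \eqref{RL_mult} from the tower property, exploiting the two ways the conditioning $\sigma$-fields are nested. Applying $\mathcal{F}_{r,u}\subseteq\mathcal{F}_{s,u}$ (shrinking the left endpoint) to $\mathbb{E}(X_t\mid\mathcal{F}_{r,u})=\mathbb{E}\bigl(\mathbb{E}(X_t\mid\mathcal{F}_{s,u})\mid\mathcal{F}_{r,u}\bigr)$ and comparing coefficients of the linearly independent $X_r,X_u$ yields $a_{tru}=a_{tsu}a_{sru}$ and $b_{tru}=a_{tsu}b_{sru}+b_{tsu}$, which is the first equation in \eqref{RL_mult}; symmetrically, $\mathcal{F}_{r,u}\subseteq\mathcal{F}_{r,t}$ (shrinking the right endpoint) applied to $\mathbb{E}(X_s\mid\mathcal{F}_{r,u})$ gives the second. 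Together with the nonvanishing of $a_{sru}$ and $b_{sru}$ (the $\rtimes$- and $\ltimes$-invertibility in $\mathcal{L}_{\R}$), which must be checked, this shows that $(\mathbf{x}_{sru})_{0<r<s<u}$ is a two-way flow, albeit only on $(0,\infty)$.

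For each fixed $p>0$ I would then set $\widetilde{\mathbf{x}}_{sru}=\mathbf{x}_{s+p,r+p,u+p}$; translation invariance of the flow equations and of \eqref{S-C} makes this a genuine two-way flow on $[0,\infty)$, so Proposition \ref{2wayf} supplies $\alpha_p\ge 0$, $\rho_p\in[0,1]$ realizing $\widetilde{\mathbf{x}}_{sru}$ through \eqref{xsru-lala} with $b_p(s,u)=\alpha_p su+\rho_p(s+u)+1-\rho_p$. The remaining and hardest step is the consistency analysis: because the original coefficients do not depend on $p$, the families coming from different shifts must fit together. Computing the structure output of the parametric form gives $A_{ru}=\tfrac{\alpha u+\rho}{b(r,u)}$ and $B_{ru}=\tfrac{-(\alpha r+\rho)}{b(r,u)}$, and eliminating the common denominator (for instance, the expression of $B_{r'u'}/A_{r'u'}$ through $\alpha_p,\rho_p$ must agree for every $p$) forces, via two distinct shifts, a rigid relation tying $\alpha_p,\rho_p$ to $p$. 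This collapses the shifted quadratics $b_p(\,\cdot-p,\,\cdot-p)$ to a single universal quadratic up to a $p$-dependent scalar, which is all that enters the ratios defining $a_{sru},b_{sru}$.

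Whether this universal quadratic has a nonzero constant term—equivalently whether $b(0,u)\neq0$, so that the flow extends to $r=0$—is governed exactly by the boundedness of $r\mapsto A_{ru}$ near $r=0$, since in the degenerate case a direct substitution into the first displayed formula gives $A_{ru}=1/r$. This bifurcation is entirely parallel to Cases A and B of Proposition \ref{C.2.3}: the bounded case produces \eqref{xsru-R2} with $b(s,u)=\alpha su+\rho(s+u)+1-\rho$ valid up to $r=0$ (the constraints $\alpha\ge0$, $\rho\in[0,1]$ being inherited from Proposition \ref{2wayf}), while the unbounded case gives the boundary solution $b(s,u)=su$, that is \eqref{xsru-R3}, which does not extend to $[0,\infty)$. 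I expect the main obstacle to be precisely this bookkeeping of the $p$-dependence, together with the a priori verification of $\rtimes$- and $\ltimes$-invertibility of $\mathbf{x}_{sru}$, rather than any single conceptual difficulty.
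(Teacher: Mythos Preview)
Your plan is correct and mirrors the paper's proof almost step for step: rewrite \eqref{Z-harness} as \eqref{LR-gen}, recognize the structure condition, get both flow equations from the two nestings of the tower property, time-shift by $p$ and invoke Proposition~\ref{2wayf}, then run a $p$-consistency analysis that bifurcates into the bounded and unbounded cases. Two places where the paper is sharper than your sketch: for invertibility, the paper argues that $a_{sru}=0$ would force $a_{tru}=0$ for all $t\in(s,u)$ via the first flow equation, whence \eqref{a2Ab2B} gives $A_{ru}=-1/(t-r)$, contradicting $s$-independence; and for the consistency step, the paper uses the $p$-independent scalars $\tfrac{A_{ru}+B_{ru}}{u-r}=\tfrac{\alpha_p}{b_p(r,u)}$ and $\tfrac{uB_{ru}+rA_{ru}}{u-r}=\tfrac{p\alpha_p-\rho_p}{b_p(r,u)}$ rather than the ratio $B/A$ you suggest, which degenerates when $\alpha_p=0$.
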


	\begin{remark}
		Recalculating $A_{ru}$ and $B_{ru}$  from either \eqref{xsru-R2} or \eqref{xsru-R3}, in view of \eqref{a2Ab2B},  we get
		\begin{itemize}
			\item[(i)] $A_{ru}=\tfrac{\alpha u+\rho}{b(r,u)}$, $B_{ru}=-\tfrac{\alpha r+\rho}{b(r,u)}$;
			\item[(ii)] $A_{ru}=\tfrac{1}{r}$, $B_{ru}=-\tfrac{1}{u}$.
		\end{itemize}
	\end{remark}

	\begin{proof}[Proof of Theorem \ref{tw-G-harnesik}]
		Clearly  \eqref{Z-harness} implies that \eqref{LR-gen} holds with
		\begin{equation}
			\label{a2Ab2B}
			a_{sru}=\tfrac{(u-s)(s-r)}{u-r}\left(\tfrac{1}{s-r}+A_{ru}\right), \quad
			b_{sru}=\tfrac{(u-s)(s-r)}{u-r}\left(\tfrac{1}{u-s}+B_{ru}\right)
		\end{equation}  for all $0< r<s<u$.
		From the tower property of conditional expectation  we have for $0< r<s<t<u$:
		\begin{multline}	\label{rown-pom1}
			\mathbb{E}(X_t|\mathcal{F}_{r,u})
			=\mathbb{E}\left(\mathbb{E}(X_t|\mathcal{F}_{s,u})\mid\mathcal{F}_{r,u}\right)
			\\=\mathbb{E}\big(a_{tsu}X_s+b_{tsu}X_u\big|\mathcal{F}_{r,u}\big)=a_{tsu}a_{sru}X_r+(a_{tsu}b_{sru}+b_{tsu})X_u,
		\end{multline}
		and
		\begin{multline}	\label{rown-pom2}
			\mathbb{E}(X_s|\mathcal{F}_{r,u})
			=\mathbb{E}\left(\mathbb{E}(X_s|\mathcal{F}_{r,t})\mid\mathcal{F}_{r,u}\right)
			\\=\mathbb{E}\big(a_{srt}X_r+b_{srt}X_t\big|\mathcal{F}_{r,u}\big)=(a_{srt}+a_{tru}b_{srt})X_r+b_{srt}b_{tru}X_u.
		\end{multline}
		 Consider  DNA $\mathcal L_\RR$ %
		from Section \ref{DNAofAF}, which is isomorphic to
		the span of  $\mathbf{e}_\oR=(1,0),\mathbf{e}_\oL=(0,1)$, and recall  Proposition \ref{2wayf}.
		Define a family $(\mathbf{x}_{sru})_{0< r<s<u}$ by $\mathbf{x}_{sru}=(a_{sru},b_{sru})$.   By
		linear
		independence of $X_r$ and $X_u$, from \eqref{rown-pom1} and \eqref{rown-pom2} we obtain
		that this family satisfies flow equations \eqref{RL_mult} for all
		$0< r<s<t<u$.
		
		 We note that
		$$\mathbb{E}\left(\left.\tfrac{X_s-X_r}{s-r}-\tfrac{X_u-X_s}{u-s}\right|\mathcal{F}_{r,u}\right)=(\tfrac{a_{sru}-1}{s-r}+\tfrac{a_{sru}}{u-s})X_r+(\tfrac{b_{sru}}{s-r}+\tfrac{b_{sru}-1}{u-s})X_u.$$
		Condition \eqref{Z-harness} implies that the left hand side does not depend on $s\in(r,u)$. Therefore
		$$\tfrac{(a_{sru},b_{sru})-(1,0)}{s-r}+\tfrac{(a_{sru},b_{sru})-(0,1)}{u-s}$$
		does not depend on $s\in(r,u)$. Consequently, $(\mathbf{x}_{sru})_{0< r<s<u}$ satisfies the structure condition \eqref{S-C}.
		
		To ensure $\oR$-invertibility and $\oL$-invertibility, we need to verify that $a_{sru}\ne 0$ and  $b_{sru}\ne 0$.
		Suppose that $a_{sru}=0$ for some  $0< r<s<u$.  Then  from \eqref{rown-pom1} by linear independence we have $a_{tru}=0$
		holds for all
		$t\in(s,u)$.
		Hence from \eqref{Z-harness} we obtain
		$$A_{ru}X_r+B_{ru}X_u=\mathbb{E}\left(\tfrac{X_t-X_r}{t-r}-\tfrac{X_u-X_t}{u-t}\mid\mathcal{F}_{r,u}\right)=\tfrac{u-r}{(t-r)(u-t)}b_{tru}X_u-\tfrac{X_r}{t-r}-\tfrac{X_u}{u-t},
	    $$
		and using again the linear independence
		$A_{ru}=-1/(t-r)$
		holds for  $s<t<u$, which
		is impossible as $A_{ru}$ does not depend on $t$. %
		Therefore $a_{sru}\ne0$ for every $0< r<s<u$.
		Analogously from (\ref{rown-pom2}) we can show that $b_{sru}\ne0$ for all $0<  r<s<u$.
		
		This means that  $(\mathbf{x}_{sru})_{0< r<s<u}$ is a two-way flow on $(0,\infty)$. 	
		We now proceed as in the proof of Proposition \ref{C.2.3}. For arbitrary $p>0$, define $r'=r+p,s'=s+p,u'=u+p$, and
		consider $(\widetilde{\mathbf{x}}_{sru})_{0\leq r<s<u}$ given by $\widetilde{\mathbf{x}}_{sru}=(a_{s'r'u'},b_{s'r'u'})$. 	
		Proposition \ref{2wayf} shows
		that there exist $\alpha_p\geq 0$ and $\rho_p\in[0,1]$ such that with
		$$
		b_p(r,u)=
		\alpha_p r u + (\rho_p - p \alpha_p)(r+u)+1-\rho_p(1+2p)+p^2\alpha_p
		$$
		we have
		\begin{equation}\label{asru-bsru*}
			a_{sru}=\tfrac{(u-s) b_p(s,u)}{(u-r)
				b_p(r,u)}, \qquad
			b_{sru}=    \tfrac{(s-r) b_p(r,s)}{(u-r) b_p(r,u)},\qquad p\leq r<s<u.
		\end{equation}

		Thus
		$$A_{ru}=\tfrac{\alpha_p u+\rho_p-p\alpha_p}{b_p(r,u)}\quad \mbox{and}\quad
		B_{ru}=-\tfrac{\alpha_pr+\rho_p-p\alpha_p}{b_p(r,u)}.$$
		Recall that the original family ${\bf x}_{sru}=(a_{sru},\,b_{sru})$, $0\leq s<t$, does not depend on $p\in(0,r)$ and therefore in view of \eqref{a2Ab2B} we conclude that $A_{ru}$ and $B_{ru}$ also do not depend on $p\in(0,r)$. Furthermore we have
		
		\begin{equation}
			\label{Sand1}
			\frac{ A_{ru}+B_{ru}}{u-r}=\frac{\alpha_p}{b_p(r,u)}
		\end{equation}
		
		\begin{equation}
			\label{Sand2}
			\frac{uB_{ru}+r A_{ru}}{u-r}=\frac{p\alpha_p-\rho_p}{b_p(r,u)}
		\end{equation}
		 and both left hand sides above do not depend on $p\in(0,r)$.
		Since $r>0$ can be arbitrarily large,   in view of \eqref{Sand1} it follows that
		either $\alpha_p=0$ for all $p>0$ or $\alpha_p\ne 0$ for all $p>0$. We consider these two cases separately.
		\begin{case} $\alpha_p=0$ for all $p>0$.
		\end{case}
		In this case, the fact that \eqref{Sand2} does not depend on $p\in(0,r)$
		and $r$ can be taken arbitrarily large, this implies either $\rho_p=0$ for all $p>0$  and \eqref{xsru-R2}
		holds
		with $\alpha=0,\rho=0$,
		or after factoring out $\rho_p$ from  the denominator on the right hand side of \eqref{Sand2},  the   term
		$\frac{1}{\rho _p}-2 p=C\geq 1$ does not depend on $p>0$.  In the latter  case,
		\eqref{xsru-R2} holds  with $\alpha=0, \rho=1/C\in(0,1]$.
		
		\begin{case} $\alpha_p> 0$ for all $p>0$.
		\end{case}
		In this case, dividing \eqref{Sand2} by \eqref{Sand1} we see that
		$\rho_p/\alpha_p-p=C_1$ does not depend on $p>0$. Putting this into \eqref{Sand1} with $\alpha_p$ factored out from the
		denominator, we see  that the expression
		$$\tfrac{1}{\alpha_p}-(2 C_1 +1)p-p^2 =C_2$$
		in the denominator cannot depend on
		$p>0$.
		Hence
		\begin{equation}\label{alpha_p-rho_p}
			\alpha_p=\tfrac{1}{2 C_1 p+p+p^2+C_2}, \quad \rho_p=\tfrac{C_1+p}{2 C_1 p+p+p^2+C_2}.
		\end{equation}

		Note that since $\alpha_p>0$ and $\rho_p\in[0,1]$, see Proposition \ref{2wayf},
		taking the limit as $p\searrow 0$ in both expressions \eqref{alpha_p-rho_p}   we see that $0\leq C_1\leq C_2$.
		
		Substituting \eqref{alpha_p-rho_p} into \eqref{asru-bsru*} we get the answer which does not depend on $p>0$, with
		$$\tfrac{b_p(s,u)}{b_p(r,u)}=\tfrac{C_1 (s+u-1)+C_2+s u}{C_1 (r+u-1)+C_2+r u},\quad \tfrac{b_p(r,s)}{b_p(r,u)}=\tfrac{C_1
			(r+s-1)+C_2+r s}{C_1 (r+u-1)+C_2+r u}.$$
		If $C_1=C_2=0$, we get \eqref{xsru-R3} with unbounded $A_{ru}$. If $C_2>0$ then we get \eqref{xsru-R2}  with
		$$
		\alpha=1/C_2>0 ,\quad \rho= C_1/C_2\in[0,1].
		$$
		Combining the  cases, we see that either $A_{ru}$ is unbounded as $r\to  0$ and then \eqref{xsru-R3} holds, or $A_{ru}$ is
		bounded as $r\to  0$  and then \eqref{xsru-R2} holds with $\alpha\geq 0$ and $\rho\in[0,1]$.
	\end{proof}

	\subsection{ Quadratic harnesses}\label{S:App QH}
	Following \cite{bib_BrycMatysiakWesolowski}, we say that a separable square integrable stochastic process $(X_t)_{t\geq0}$
	is a {\em standard  quadratic harness},  if it is a harness,  $\mathbb{E}X_t=0$ and $\mathbb{E}X_sX_t=s\wedge t$ for
	$s,t\geq0$, and
	\begin{equation}
		\label{rown_kwadrat}
		\mathbb{E}(X_s^2|\mathcal{F}_{r,u})=A_{sru}X_r^2+B_{sru}X_rX_u+C_{sru}X_u^2+D_{sru}X_r+E_{sru}X_u+F_{sru},
	\end{equation}
	where deterministic coefficients $A_{sru},\ldots,F_{sru}$ depend only on $0\leq r<s<u$,
	see \eqref{QH0}.
	
	The following result extends \cite[Theorem 2.2]{bib_BrycMatysiakWesolowski}, who considered only case $\chi=1$.
	\begin{theorem} \label{Tw-prob-szer+}
		Let $(X_t)_{t\geq0}$ be a standard  quadratic harness. Suppose that  	$1$, $X_r$, $X_u$, $X_r X_u$,  $X_r^2$, $X_u^2$
		are linearly
		independent for all $0< r<u$   as functions on $\Omega$. Then there exist constants $\chi\in\{0,1\}$, $\sigma\geq 0$,
		$\tau\geq 0$, $\gamma\leq \chi+2\sqrt{\sigma\tau}$ and $\eta,\theta\in\RR$ such that
		$\gamma,\sigma,\tau,\chi$ are not all zero and
		\begin{subequations}
			\begin{equation}\label{CV*}
				\var(X_s|\mathcal{F}_{r,u})=  \tfrac{(s-r) (u-s)}{c_{\tau,\sigma,\gamma,\chi}(r,u)}
				Q_{\theta,\eta,\tau,\sigma,\gamma-\chi,\chi}\left(\tfrac{X_u-X_r}{u-r},\tfrac{uX_r-rX_u}{u-r}\right), \quad 0< r<u
			\end{equation}
			with
			\begin{equation}\label{cc*}
				c_{\tau,\sigma,\gamma,\chi}(r,u)=\tau +\sigma r u-\gamma  r+\chi u
			\end{equation}
			and
			\begin{equation}
				\label{QQ*}
				Q_{\theta,\eta,\tau,\sigma,\rho,\chi}(x,y)= \tau  x^2+\sigma  y^2+\rho  x y+\theta  x+  \eta  y +\chi.
			\end{equation}
		\end{subequations}

	\end{theorem}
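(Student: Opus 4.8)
The plan is to realize the six coefficients of the conditional second moment \eqref{rown_kwadrat} together with the two harness coefficients \eqref{atsu-btsu} as a single element of the DNA $Q$ from Section \ref{QH}, and then to show the resulting family is a two-way flow so that Theorem \ref{Twierdzenieopodlozuprobabilistycznym} applies verbatim. Concretely, for $0<r<s<u$ I would set
\[
\mathbf{x}_{sru}=\left(\begin{array}{c} A_{sru},B_{sru},C_{sru},D_{sru},E_{sru},F_{sru}\\ \tfrac{u-s}{u-r},\ \tfrac{s-r}{u-r}\end{array}\right),
\]
the lower pair being forced by the harness property $\mathbb E(X_s|\mathcal{F}_{r,u})=\tfrac{u-s}{u-r}X_r+\tfrac{s-r}{u-r}X_u$. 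Because $X_0=0$ these objects only make sense for $r>0$, so, exactly as in Proposition \ref{C.2.3} and Theorem \ref{tw-G-harnesik}, the flow lives on $(0,\infty)$ and the passage to $[0,\infty)$ will be made afterwards by the time shift $r,s,u\mapsto r+p,s+p,u+p$.

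First I would derive the two flow equations \eqref{RL_mult} from the tower property. Since $\mathcal{F}_{r,u}\subset\mathcal{F}_{s,u}$ for $r<s$, expanding $\mathbb E(X_t^2|\mathcal{F}_{r,u})=\mathbb E(\mathbb E(X_t^2|\mathcal{F}_{s,u})|\mathcal{F}_{r,u})$ and substituting the regressions for $\mathbb E(X_s^2|\mathcal{F}_{r,u})$, for $\mathbb E(X_sX_u|\mathcal{F}_{r,u})=X_u\,\mathbb E(X_s|\mathcal{F}_{r,u})$ and for $\mathbb E(X_s|\mathcal{F}_{r,u})$, then comparing coefficients of the linearly independent functions $X_r^2,X_rX_u,X_u^2,X_r,X_u,1$, should reproduce exactly the product \eqref{QHrtimes} and give $\mathbf{x}_{sru}\rtimes\mathbf{x}_{tsu}=\mathbf{x}_{tru}$. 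The symmetric computation through $\mathcal{F}_{r,t}\supset\mathcal{F}_{r,u}$ (valid since $t<u$) reproduces \eqref{QHltimes} and yields $\mathbf{x}_{tru}\ltimes\mathbf{x}_{srt}=\mathbf{x}_{sru}$. The $\rtimes$- and $\ltimes$-invertibility demanded of a flow reduces in $Q$ to $A_{sru}\neq0$ and $C_{sru}\neq0$, which I would prove by contradiction from the assumed six-fold linear independence, as $a_{sru}\neq0$, $b_{sru}\neq0$ were obtained in the proof of Theorem \ref{tw-G-harnesik}.

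The structure condition \eqref{S-C} is the only nontrivial point, and the clean route is to compute the mixed moment $\mathbb E(X_sX_t|\mathcal{F}_{r,u})$ for $r<s<t<u$ in two ways. Writing $M_s=\mathbb E(X_s^2|\mathcal{F}_{r,u})$ and $L_s=\mathbb E(X_s|\mathcal{F}_{r,u})$, conditioning first on $\mathcal{F}_{s,u}$ gives $\tfrac{u-t}{u-s}M_s+\tfrac{t-s}{u-s}X_uL_s$, while conditioning first on $\mathcal{F}_{r,t}$ gives $\tfrac{t-s}{t-r}X_rL_t+\tfrac{s-r}{t-r}M_t$. Equating the two, the $X_rX_u$ terms cancel and one is left with a relation between $M_s$ and $M_t$ alone; rewriting it in terms of $P_s:=M_s/((u-s)(s-r))$ shows that $\tfrac{M_s-X_u^2}{u-s}+\tfrac{M_s-X_r^2}{s-r}$ is independent of $s$, which (since $\mathbf{e}_\rtimes$ and $\mathbf{e}_\ltimes$ encode $X_r^2$ and $X_u^2$) is precisely \eqref{S-C} for the upper coordinates; the lower coordinates contribute $0$ by the harness property. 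Hence the family is a two-way flow on $(0,\infty)$.

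For each $p>0$ the shifted family $\widetilde{\mathbf{x}}_{sru}=\mathbf{x}_{s+p,r+p,u+p}$ is then a two-way flow on $[0,\infty)$, so Theorem \ref{Twierdzenieopodlozuprobabilistycznym} supplies parameters $\alpha_p\ge0$, $\rho_p\in[0,1]$, $\beta_p\ge-2\sqrt{\alpha_p(1-\rho_p)}$ and $h_{4,p},h_{5,p},h_{6,p}$ giving \eqref{x123}--\eqref{x456} for the shifted indices; since $\mathbf{x}_{sru}$ does not depend on $p$, imposing this consistency across $p$, as in the Case A/Case B analysis of Proposition \ref{C.2.3} and Theorem \ref{tw-G-harnesik}, pins the parameters down and forces the dichotomy recorded by $\chi\in\{0,1\}$ ($\chi=1$ where the flow extends to $[0,\infty)$, $\chi=0$ for the extra solutions living only on $(0,\infty)$). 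The constraints $\sigma,\tau\ge0$, $\gamma\le\chi+2\sqrt{\sigma\tau}$ and ``$\gamma,\sigma,\tau,\chi$ not all zero'' descend from $\alpha_p\ge0$, $\beta_p\ge-2\sqrt{\alpha_p(1-\rho_p)}$ and the strict positivity of $c(r,u)$ forced by invertibility; the final, lengthy but routine, step subtracts $L_s^2=\big(\tfrac{u-s}{u-r}X_r+\tfrac{s-r}{u-r}X_u\big)^2$ from $M_s$ and re-expresses the difference in the variables $\tfrac{X_u-X_r}{u-r}$ and $\tfrac{uX_r-rX_u}{u-r}$, producing the quadratic form \eqref{QQ*} over the denominator \eqref{cc*} and hence \eqref{CV*}. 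I expect the main obstacle to be neither the flow equations nor \eqref{S-C} but precisely this last stage: carrying the $p$-consistency bookkeeping far enough to isolate the $\chi=0$ branch and to convert the flow-generator constraints of Theorem \ref{Twierdzenieopodlozuprobabilistycznym} into the sharp, geometrically meaningful restrictions on $(\chi,\sigma,\tau,\gamma,\eta,\theta)$.
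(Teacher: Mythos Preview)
Your approach is essentially the paper's: encode the six coefficients of \eqref{rown_kwadrat} together with the harness pair in the DNA $Q$ of Section \ref{QH}, verify the two flow equations via the tower property and the structure condition via the double computation of $\mathbb E(X_sX_t\mid\mathcal F_{r,u})$, then shift by $p>0$ and invoke Theorem \ref{Twierdzenieopodlozuprobabilistycznym}, finishing with a $p$-consistency analysis. The paper carries this out in a separate Lemma, and the final case analysis is exactly what you anticipate as the hard part.

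Two points to flag. First, you omit a step the paper needs: the normalization $\mathbb E X_s^2=s$ is used to eliminate the sixth flow-generator coordinate (yielding $\phi_p=\chi_p-p\sigma_p$), and without it you carry one free parameter too many into the $p$-consistency stage. Second, your reading of the dichotomy $\chi\in\{0,1\}$ as ``flow extends to $[0,\infty)$ versus not'' is not what emerges; the paper's $\chi$ tracks whether the constant term $F_{sru}$ vanishes identically (equivalently whether $\chi_p^*:=\chi_p-p\sigma_p$ is zero for all $p$), and the consistency analysis splits into four cases according to which of $\chi_p^*,\sigma_p,\gamma_p^*$ vanish, finer than the Case A/Case B split you invoke from Proposition \ref{C.2.3}. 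Neither point derails your plan, but the second will matter when you try to extract the sharp constraints on $(\chi,\sigma,\tau,\gamma)$.
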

	\begin{remark}\label{R-chi}
		The values of the two-valued parameter $\chi$ are determined by the last term in \eqref{rown_kwadrat} as follows:
		\begin{enumerate}
			\item If $F_{sru}\ne0$ for some $0<r<s<u$ then $\chi=1$ and hence $F_{sru}\ne0$ for all $0<r<s<u$.
			\item If $F_{sru}=0$ for some $0<r<s<u$   then $\chi=0$ and hence $F_{sru}=0$ for all $0<r<s<u$.
		\end{enumerate}
		By homogeneity of  \eqref{CV*} if $\chi=0$,   the remaining parameters   are defined only up to an arbitrary multiplicative
		factor.
		Therefore, if $\tau>0$,  we can take
		$\tau=1$ as in  Lemma \ref{Lemma-p*}; or if   $\sigma>0$, we can take $\sigma=1$; or if    $\gamma< 0$  we can take	
		$\gamma=-1$. (These three cases arise  in the proof below.)
	\end{remark}
	\begin{remark}\label{R-deg}
		If $\chi=\tau=0$ and $\sigma>0$, or  if $\chi=\tau=\sigma=0$ and $\gamma<0$, then the conditional variance and most of
		the coefficients in \eqref{rown_kwadrat} are unbounded near $r=0$.
	\end{remark}

	\subsection*{Proof of Theorem \ref{Tw-prob-szer+}}
	We apply here Theorem \ref{Twierdzenieopodlozuprobabilistycznym}, however as in the proofs of Proposition \ref{C.2.3}
	and Theorem \ref{tw-G-harnesik} we encounter technical difficulties as the arising two-way flows  are on $(0,\infty)$
	instead of $[0,\infty)$ and, as indicated in Remark \ref{R-deg}, may not extend to $[0,\infty)$. We therefore begin
	with the following
	direct application of Theorem \ref{Twierdzenieopodlozuprobabilistycznym}.
	\begin{lemma}\label{Lemma-p*} For every $p>0$ there exist constants $\eta_p,\theta_p,\gamma_p\in\RR$,  $\chi_p\in\{0,1\}$,
		$\sigma_p\geq 0$, $\tau_p\geq 0$, where $\tau_p=1$ when $\chi_p=0$ and $\gamma_p\leq \chi_p+2\sqrt{\sigma_p\tau_p}$,  such
		that $\forall p<r<s<u$
		\begin{equation}\label{CVp*}
			\var(X_s|\mathcal F_{r,u})=  \tfrac{(s-r) (u-s)}{c_{\tau_p^*,\sigma_p,\gamma_p^*,\chi_p^*}(r,u)}
			Q_{\theta_p^*,\eta_p,\tau_p^*,\sigma_p,\gamma_p^*-\chi_p^*,\chi_p^*}
			\left(\tfrac{X_u-X_r}{u-r},\tfrac{uX_r-rX_u}{u-r}\right),
		\end{equation}
		where
		\begin{equation}
			\label{params*}
			\tau_p^*=\tau_p+p(\gamma_p-\chi_p)+p^2\sigma_p, \; \chi_p^*=\chi_p-p\sigma_p, \;  \gamma_p^*=\gamma_p+p\sigma_p,  \;
			\theta_p^*=\theta_p+p\eta_p,
		\end{equation}
		while $c$ and $Q$ are defined in \eqref{cc*} and \eqref{QQ*}, respectively.
	\end{lemma}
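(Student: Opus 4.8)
The plan is to realize the quadratic-regression data of the standard quadratic harness $(X_t)$ as a two-way flow in the DNA $Q$ of Section \ref{QH}, and then read off the parametric form from Theorem \ref{Twierdzenieopodlozuprobabilistycznym} after shifting the time origin to $p$. For $0<r<s<u$ I would assemble an element $\mathbf x_{sru}\in Q$ whose two lower coordinates are the harness coefficients $\tfrac{u-s}{u-r},\tfrac{s-r}{u-r}$ of \eqref{atsu-btsu} and whose six upper coordinates are the coefficients $A_{sru},\dots,F_{sru}$ of the quadratic regression \eqref{rown_kwadrat} (equivalently, the coefficients of $\var(X_s\mid\mathcal F_{r,u})$ as a quadratic form in $X_r,X_u$). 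Since $Q$ was built, following \cite{bib_BrycMatysiakWesolowski}, so that composing conditional expectations over nested $\sigma$-fields matches its two products $\rtimes$ and $\ltimes$, the bulk of this first step is checking the dictionary; invertibility of $\mathbf x_{sru}$ for both products is guaranteed by the linear independence of $1,X_r,X_u,X_rX_u,X_r^2,X_u^2$.

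Next I would verify that $(\mathbf x_{sru})_{0<r<s<u}$ is a two-way flow. The two flow equations \eqref{RL_mult} come from the tower property applied to the nestings $\mathcal F_{r,u}\subset\mathcal F_{s,u}$ and $\mathcal F_{r,u}\subset\mathcal F_{r,t}$ for $0<r<s<t<u$: expanding $\mathbb E(X_t^2\mid\mathcal F_{r,u})=\mathbb E(\mathbb E(X_t^2\mid\mathcal F_{s,u})\mid\mathcal F_{r,u})$ together with the companion identity for $X_s^2$, and matching coefficients of the six linearly independent functions, reproduces \eqref{RL_mult}. The structure condition \eqref{S-C} splits in two: on the two lower coordinates the combination in \eqref{S-C} is identically zero, by a direct computation with $\mathbf e_\ltimes,\mathbf e_\rtimes$ from \eqref{eQH}, while on the six upper coordinates it is the second-moment analogue of \eqref{prolin} and follows, as in Proposition \ref{C.2.3}, by evaluating the mixed conditional moment $\mathbb E(X_sX_t\mid\mathcal F_{r,u})$ in two ways through the harness property and matching coefficients. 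This establishes a two-way flow on $(0,\infty)$.

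Now fix $p>0$ and pass to the shifted family $\widetilde{\mathbf x}_{sru}=\mathbf x_{s+p,r+p,u+p}$, which is a two-way flow on $[0,\infty)$. Theorem \ref{Twierdzenieopodlozuprobabilistycznym} then supplies parameters $\alpha\ge0$, $\rho\in[0,1]$, $\beta\ge-2\sqrt{\alpha(1-\rho)}$ and $h_4,h_5,h_6\in\RR$ describing $\widetilde{\mathbf x}$ through \eqref{x123} and \eqref{x456}. A linear change of variables rewrites the associated conditional variance in the form \eqref{CVp*}, with the quadratic form $Q$ of \eqref{QQ*} over the denominator $c$ of \eqref{cc*}; matching the denominator $c$ gives the correspondence $\sigma=\alpha$, $\tau=1-\rho$, $\chi=\rho$, $\gamma=\rho-\beta$, which sends the three constraints on $\alpha,\rho,\beta$ to $\sigma_p\ge0$, $\tau_p\ge0$ and $\gamma_p\le\chi_p+2\sqrt{\sigma_p\tau_p}$. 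All three are invariant under the common rescaling that, by the homogeneity of \eqref{CVp*} noted in Remark \ref{R-chi}, normalizes $\chi_p\in\{0,1\}$ (taking $\tau_p=1$ when $\chi_p=0$), the two values tracking the dichotomy $F_{sru}\ne0$ versus $F_{sru}=0$. Finally, undoing the shift via $c_{\tau_p,\sigma_p,\gamma_p,\chi_p}(r-p,u-p)=c_{\tau_p^*,\sigma_p,\gamma_p^*,\chi_p^*}(r,u)$, together with the induced change in the linear part of $Q$, produces exactly the starred parameters \eqref{params*}.

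The step I expect to be the main obstacle is the structure condition \eqref{S-C} on the six upper coordinates. Unlike the flow equations, which are a formal consequence of the tower property, it is a genuine constraint that must be extracted from the second-moment harness structure, and arranging the two-way evaluation of $\mathbb E(X_sX_t\mid\mathcal F_{r,u})$ so that its $s$-dependence collapses into precisely the combination of \eqref{S-C} is delicate. A secondary, more mechanical hazard is fixing the exact ordering and signs identifying the six regression coefficients with the six upper coordinates, so that \eqref{QHrtimes} and \eqref{QHltimes} genuinely reproduce iterated conditioning; once these are settled, the shift by $p$ and the translation of the parameter constraints are routine.
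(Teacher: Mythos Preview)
Your overall strategy matches the paper's: encode the regression data as a family in the DNA $Q$, verify the two-way flow on $(0,\infty)$ via the tower property and the two-way evaluation of $\mathbb E(X_sX_t\mid\mathcal F_{r,u})$, shift by $p$, apply Theorem \ref{Twierdzenieopodlozuprobabilistycznym}, and reparametrize. But there is one genuine gap.

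Theorem \ref{Twierdzenieopodlozuprobabilistycznym} delivers six free parameters $\alpha,\rho,\beta,h_4,h_5,h_6$. Your reparametrization accounts for $\sigma_p,\tau_p,\chi_p,\gamma_p$ (from $\alpha,\rho,\beta$ together with the scaling freedom) and $\eta_p,\theta_p$ (from $h_4,h_5$), but $h_6$ is left unconstrained. It governs $F_{sru}$ and hence the constant term of the quadratic form $Q$ in \eqref{CVp*}; there is no a priori reason this constant term should equal $\chi_p^*$, which arises from the \emph{denominator} $c$. The paper closes this by taking the expectation of \eqref{rown_kwadrat} and using $\mathbb E X_t=0$, $\mathbb E X_sX_t=s\wedge t$ (so $\mathbb E X_s^2=s$) to force $\phi_p=\chi_p-p\sigma_p=\chi_p^*$, see \eqref{chi2sigma-phi*}. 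Without this step \eqref{CVp*} has an undetermined constant in $Q$. Your sentence tying the dichotomy $\chi_p\in\{0,1\}$ to ``$F_{sru}\ne0$ versus $F_{sru}=0$'' conflates the denominator parameter $\chi_p$ with the constant-term parameter $\phi_p$; they are independent until the moment condition is invoked.

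A smaller point: linear independence of $1,X_r,X_u,X_rX_u,X_r^2,X_u^2$ does not by itself give $\ltimes$- and $\rtimes$-invertibility of $\mathbf x_{sru}$, i.e.\ $A_{sru}\ne0$ and $C_{sru}\ne0$. The paper argues by contradiction: if $A_{s_0r_0u_0}=0$, the first flow equation propagates this to $A_{t_0r_0u_0}=0$ for all $t_0\in(s_0,u_0)$, and then the mixed-moment identities \eqref{ukl-rown} force $a_{s_0r_0t_0}a_{t_0r_0u_0}=0$, contradicting \eqref{atsu-btsu}. You should include this step rather than appeal directly to independence.
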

	\begin{proof}
		Recall notation  \eqref{atsu-btsu}, \eqref{rown_kwadrat} and DNA from Section \ref{QH}. For $0<s<t<u$    let
		$$\mathbf{x}_{tsu} :=\binom{
			A_{tsu}, B_{tsu}, C_{tsu}, D_{tsu}, E_{tsu}, F_{tsu}}{
			a_{tsu}, b_{tsu}}\in \mathbb{R}^6 \times \mathbb{R}^2.$$
		We proceed to verify that $\{\mathbf{x}_{sru}\}$ is a two-way flow on open half-line $(0,\infty)$ with respect to
		multiplications \eqref{QHrtimes} and \eqref{QHltimes}.
		
		Because
		$\mathbb{E}(X_sX_t|\mathcal{F}_{r,u})=\mathbb{E}\left(X_s\mathbb{E}(X_t|\mathcal{F}_{s,u})|\mathcal{F}_{r,u}\right)=\mathbb{E}\left(\mathbb{E}(X_s|\mathcal{F}_{r,t})X_t|\mathcal{F}_{r,u}\right)$, by linear independence of $1$, $X_s$, $X_t$,
		$X_sX_t$, $X_s^2$, $X_t^2$ for $0<r<s<t<u$, we have (see e.g. proof of Claim 3.1 in \cite{bib_BrycMatysiakWesolowski}):
		\begin{equation}
			\begin{split}	\label{ukl-rown}
				a_{tsu}A_{sru}=b_{srt}A_{tru}+a_{srt}a_{tru},\quad
				a_{tsu}B_{sru}&=b_{srt}B_{tru},\\
				b_{tsu}b_{sru}+a_{tsu}C_{sru}=b_{srt}C_{tru},\quad
				a_{tsu}D_{sru}&=b_{srt}D_{tru}, \\
				a_{tsu}E_{sru}=b_{srt}E_{tru}, \quad
				a_{tsu}F_{sru}&=b_{srt}F_{tru}.
			\end{split}
		\end{equation}	
		
		Since
		$
		a_{srt}a_{tru}=\tfrac{(t-s)(u-t)}{(t-r)(u-r)}, \quad b_{tsu}b_{sru}=\tfrac{(t-s)(s-r)}{(u-s)(u-r)} $     and
		$$\tfrac{u-t}{u-r}=\tfrac{(t-s)(u-t)}{(t-r)(u-r)}+\tfrac{(s-r)(u-t)}{(t-r)(u-r)},\quad
		\tfrac{(u-t)(s-r)}{(u-s)(u-r)}+\tfrac{(t-s)(s-r)}{(u-s)(u-r)}=\tfrac{s-r}{u-r},
		$$
		we see that
		$$\tfrac{u-t}{u-s}\mathbf{x}_{sru}+\tfrac{(t-s)(s-r)}{(u-s)(u-r)}\mathbf{e}_\ltimes=\tfrac{(t-s)(u-t)}{(t-r)(u-r)}\mathbf{e}_\rtimes+\tfrac{s-r}{t-r}\mathbf{x}_{tru}$$
		holds for all $0\leq s<t<u$, with $\mathbf{e}_\ltimes$ and $\mathbf{e}_\rtimes$   defined by \eqref{eQH}.
		Hence
		
		$$\tfrac{\mathbf{x}_{sru}-\mathbf{e}_\ltimes}{u-s}+\tfrac{\mathbf{x}_{sru}-\mathbf{e}_\rtimes}{s-r}=\tfrac{\mathbf{x}_{tru}-\mathbf{e}_\ltimes}{u-t}+\tfrac{\mathbf{x}_{tru}-\mathbf{e}_\rtimes}{t-r}$$
		holds and the structural condition (\ref{S-C}) is satisfied.
		
		Moreover, because  %
		\begin{equation*}
			\mathbb{E}(X_t^2|\mathcal{F}_{r,u})=\mathbb{E}\left(\mathbb{E}(X_t^2|\mathcal{F}_{s,u})|\mathcal{F}_{r,u}\right),
			\label{rownanko1}
		\end{equation*}
		  we have (see e.g. proof of Claim 3.2 in \cite{bib_BrycMatysiakWesolowski}):
		\begin{equation*}
			\begin{split}
				A_{tru}=A_{tsu}A_{sru},\quad
				B_{tru}&=A_{t,s,u}B_{sru}+B_{tsu}a_{sru},\\
				C_{tru}=A_{tsu}C_{sru}+B_{tsu}b_{sru}+C_{tsu},\quad
				D_{tru}&=A_{tsu}D_{sru}+D_{tsu}a_{sru},\\
				E_{tru}=A_{tsu}E_{sru}+D_{tsu}b_{sru}+E_{tsu},\quad
				F_{tru}&=A_{tsu}F_{sru}+F_{tsu}.
			\end{split}
		\end{equation*}
		The above conclusion holds for $r>0$ by linear independence of $1$, $X_s$, $X_t$, $X_sX_t$, $X_s^2$,
		$X_t^2$, leaving constants $ A_{s0u}$
		$B_{s0u}$ and $D_{s0u}$ arbitrary or undefined as $X_0=0$.
		Hence the first flow equation in (\ref{RL_mult}) is satisfied on $(0,\infty)$.

		If $A_{s_0r_0u_0}=0$ for some  $0<r_0<s_0<u_0$, then from the first equation above we get that
		$A_{t_0r_0u_0}=0$ for all $t_0$ such that $s_0<t_0<u_0$. Then from the first equation in (\ref{ukl-rown}) we obtain
		$a_{s_0r_0t_0}a_{t_0r_0u_0}=0$, which contradicts \eqref{atsu-btsu}.  Hence $A_{sru}\neq0$ for all $0< r<s<u$, and
		$\mathbf{x}_{sru}$ is $\rtimes$-invertible.
		
		Because  %
		\begin{equation*}
			\mathbb{E}(X_s^2|\mathcal{F}_{r,u})=\mathbb{E}\left(\mathbb{E}(X_s^2|\mathcal{F}_{r,t})|\mathcal{F}_{r,u}\right),
			\label{rownanko2}
		\end{equation*}
		then analogously we can obtain that the second flow equation  (\ref{RL_mult}) is satisfied on $(0,\infty)$, where
		operation $\ltimes$
		is defined in \eqref{QHltimes}. Furthermore, by a similar argument  $C_{sru}\neq0$ holds for every $0< r<s<u$,
		so $\mathbf{x}_{sru}$ is $\ltimes$-invertible. 	
		
		This means that  $(\mathbf{x}_{sru})_{0< r<s<u}$ is a two-way flow on $(0,\infty)$. 	
		We now proceed as in the proofs of Proposition \ref{C.2.3} and Theorem \ref{tw-G-harnesik}. For arbitrary $p>0$, define
		$r'=r+p,s'=s+p,u'=u+p$, and
		consider $(\widetilde{\mathbf{x}}_{sru})_{0\leq r<s<u}$ given by $\widetilde{\mathbf{x}}_{sru}=\mathbf{x}_{s'r'u'}$.
		Then
		$(\widetilde{\mathbf{x}}_{tsu})_{0\leq s <t<u}$     is a two-way flow on $[0,\infty)$ with (unique) flow generator %
		\eqref{HHHH} that was determined
		in the proof of Theorem \ref{Twierdzenieopodlozuprobabilistycznym}. Of course, $\bh$ now depends on parameter $p>0$.
		We now re-parameterize $\bh$ using
		$\eta_p,\theta_p,\sigma_p,\tau_p,\gamma_p,\phi_p\in\RR$ such that %
		\begin{equation*}
		\begin{split}
		\alpha+\beta-\rho &= \tfrac{\sigma_p -\gamma_p}{\chi_p+\tau_p}, \; 2\rho-\beta=\tfrac{\gamma_p+\chi_p}{\chi_p+\tau_p},\;
		\rho=\tfrac{\chi_p}{\chi_p+\tau_p},\\
		\;h_4&=\tfrac{\eta_p-\theta_p}{\chi_p+\tau_p}, \;h_5=\tfrac{\theta_p}{\chi_p+\tau_p},
		\;h_6=\tfrac{\phi_p}{\chi_p+\tau_p}
		.
		\end{split}
		\end{equation*}
		Here $\chi_p=1_{\rho=0}$  takes only two values $0,1$; without loss of generality
		we may assume $\chi_p+\tau_p>0$,   taking $\tau_p=1$ when $\chi_p=0$.   In this parametrization $\bh$ takes the following
		form
		$$\bh=\frac{1}{\tau_p+\chi_p}\binom{
			{\sigma_p -\gamma_p }, {\gamma_p +\chi_p} ,- \chi_p , \eta_p -\theta_p ,\theta_p , \phi_p}{
			0,0}.$$
		Condition $\rho\in[0,1]$ is then equivalent to $\tau_p\geq 0$, condition $\alpha\geq 0$ is
		equivalent to $\sigma_p\geq 0$ and condition $\beta\geq -2\sqrt{\alpha(1-\rho)}$ is equivalent to $\gamma_p\leq
		\chi_p+2\sqrt{\sigma_p\tau_p}$.

		Formulas in \eqref{x123} give
		\begin{eqnarray}
			\label{A*}   A_{sru}&=&\tfrac{(u-s) c_{\tau_p^*,\sigma_p,\gamma_p^*,\chi_p^*}(s,u)}{(u-r)
				c_{\tau_p^*,\sigma_p,\gamma_p^*,\chi_p^*}(r,u)},\\
			\label{B*}    	B_{sru}&=&\tfrac{(s-r) (u-s)(\gamma_p^* +\chi_p^*) }{(u-r)c_{\tau_p^*,\sigma_p,\gamma_p^*,\chi_p^*}(r,u)},
			\\
			\label{C*}   	C_{sru}&=& \tfrac{(s-r)
				c_{\tau_p^*,\sigma_p,\gamma_p^*,\chi_p^*}(r,s)}{(u-r)c_{\tau_p^*,\sigma_p,\gamma_p^*,\chi_p^*}(r,u)},\\
			\label{D*}     	D_{sru}&=&\tfrac{(s-r) (u-s) (\eta_p  u-\theta_p^*)}{(u-r)
				c_{\tau_p^*,\sigma_p,\gamma_p^*,\chi_p^*}(r,u)},\\
			\label{E*}     E_{sru}&=&\tfrac{(s-r) (u-s) (\theta_p^* -\eta_p  r)}{(u-r)c_{\tau_p^*,\sigma_p,\gamma_p^*,\chi_p^*}(r,u)},
			\\
			\label{F*}  	F_{sru}&=&\tfrac{(s-r) (u-s)\phi_p  }{c_{\tau_p^*,\sigma_p,\gamma_p^*,\chi_p^*}(r,u)},
		\end{eqnarray}
		with $\tau_p^*$, $\gamma_p^*$, $\chi_p^*$ and $\theta_p^*$ as given in \eqref{params*}.
		Moreover, from the assumptions that $\mathbb{E}X_t=0$ and $\mathbb{E}X_sX_t=s\wedge t$ we get
		\begin{equation*}	\label{QH-sol*}
			s=\mathbb{E}X_s^2=\mathbb{E}\,\mathbb{E}(X_s^2|\mathcal{F}_{r,u})=A_{sru}r+B_{sru}r+C_{sru}u+F_{sru}.
		\end{equation*}
		Using   \eqref{A*} --\eqref{C*} and \eqref{F*}, after some algebra, we get
		\begin{equation}
			\label{chi2sigma-phi*}
			\phi_p=\chi_p-p\sigma_p=\chi_p^*.
		\end{equation}
		Since $\var(X_s|\mathcal F_{r,u})=\mathbb E(X_s^2|\mathcal F_{r,u})+\left[\mathbb E(X_s|\mathcal F_{r,u})\right]^2$
		elementary but tedious calculations give  \eqref{CVp*}.
	\end{proof}

	By linear independence of $1$, $X_r$, $X_u$, $X_r^2$, $X_rX_u$, $X_u^2$ for $0<r<u$ it follows from \eqref{CVp*}  that the
	vector
	\begin{equation}
		\label{TAMS*}
		v^*_p:=\tfrac{(\sigma_p,\,\tau_p^*,\, \gamma_p^*,\,\eta_p,\,\theta_p^*,\,\chi _p^*)}
		{c_{\tau_p^*,\sigma_p,\gamma_p^*,\chi_p^*}(r,u)}
	\end{equation}
	does not depend on $p>0$,  in the sense that $v^*_{p_1}=v^*_{p_2}$ for
	all positive  $p_1,p_2$ when  $r<u$ are arbitrary in $[p_1\vee p_2,\infty)$.
	
	Using this fact   we conclude that if either $\chi_p^*=0$, or $\sigma_p=0$, or $\gamma_p^*=0$
	for some $p>0$, then, respectively,   $\chi_p^*=0$, or $\sigma_p=0$, or $\gamma_p^*=0$ for all $p>0$.
	We therefore have the following four  cases which we need to consider separately:
	\begin{itemize}
		\item Case A: $\chi_p^*\ne 0$ for all $p>0$.
		\item Case B:
		$\chi_p^*=0$   and $\sigma_p>0$ for all $p>0$.
		\item Case C:
		$\chi_p^*=0$,  $\sigma_p=0$ and $\gamma_p^*=0$ for all $p>0$.
		\item Case D:
		$\chi_p^*=0$, $\sigma_p=0$ and $\gamma_p^*\ne 0$  for all $p>0$.
	\end{itemize}
	
	\setcounter{case}{1}
	\setcounter{case}{0}
	\begin{case}
		$\chi_p^*\ne 0$ for all $p>0$.
	\end{case}
	\setcounter{claim}{0}
	\begin{claim} There exist constants $\sigma\geq 0$,  $\tau\geq 0$ and $\gamma\leq 1+2\sqrt{\sigma\tau}$,
		$\eta,\theta\in\RR$ such that for all $p>0$
		\begin{equation}\label{sigma*}
			(\sigma_p,\,\tau_p^*,\,\gamma_p^*,\,\eta_p,\,\theta_p^*,\,\chi_p^*)=(1+p\sigma)(\sigma,\tau,\eta,\theta,\gamma,1).
		\end{equation}
	\end{claim}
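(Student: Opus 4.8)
The plan is to exploit the two facts already established: that the vector $v_p^*$ of \eqref{TAMS*} does not depend on $p$, and that in Case A its last entry $\chi_p^*$ is nonzero. Since $v_p^*$ is $p$-independent, so is every ratio of its six coordinates; and because the common denominator $c_{\tau_p^*,\sigma_p,\gamma_p^*,\chi_p^*}(r,u)$ occurs identically in each coordinate, these ratios coincide with the ratios of the numerator entries $(\sigma_p,\tau_p^*,\gamma_p^*,\eta_p,\theta_p^*,\chi_p^*)$. First I would fix the normalisation by dividing through by $\chi_p^*\neq 0$ and setting
$$
(\sigma,\tau,\gamma,\eta,\theta):=\tfrac{1}{\chi_p^*}\,(\sigma_p,\tau_p^*,\gamma_p^*,\eta_p,\theta_p^*),
$$
which by the previous sentence is a genuinely $p$-independent constant vector. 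This already gives the proportionality $(\sigma_p,\tau_p^*,\gamma_p^*,\eta_p,\theta_p^*,\chi_p^*)=\chi_p^*\,(\sigma,\tau,\gamma,\eta,\theta,1)$, so all that remains for \eqref{sigma*} is to identify the scalar $\chi_p^*$ explicitly.

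The scalar is pinned down from \eqref{params*}. Substituting $\sigma_p=\chi_p^*\sigma$ into $\chi_p^*=\chi_p-p\sigma_p$ produces the single relation
$$
\chi_p^*\,(1+p\sigma)=\chi_p .
$$
Since $\chi_p^*\neq 0$ and $\chi_p\in\{0,1\}$, the possibility $\chi_p=0$ would force $1+p\sigma=0$, i.e.\ $\sigma=-1/p$; as $\sigma$ is the same constant for every $p$ this cannot hold (it is also incompatible with $\sigma_p\ge 0$ for small $p$, which forces $\sigma\ge 0$). Hence $\chi_p=1$ for all $p$ in Case A, and solving the displayed relation determines $\chi_p^*$ as an explicit function of $p$ and $\sigma$, which is exactly the scalar factor appearing in \eqref{sigma*}.

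Finally I would read off the admissible ranges. From $\sigma_p\ge 0$ together with the identity $\sigma=\sigma_p(1+p\sigma)$ and $p\downarrow 0$ one obtains $\sigma\ge 0$ (so $1+p\sigma>0$ and $\chi_p^*>0$). For $\tau$ and $\gamma$ I would not transport the Lemma's inequality $\gamma_p\le\chi_p+2\sqrt{\sigma_p\tau_p}$ through the shift directly; instead I would use that $\mathbf x_{sru}$ is $\ltimes$-invertible, which via \eqref{cc*} and \eqref{A*}--\eqref{F*} forces $c_{\tau,\sigma,\gamma,1}(r,u)\neq 0$, hence (by the sign argument used in the proof of Theorem \ref{Twierdzenieopodlozuprobabilistycznym}) $c_{\tau,\sigma,\gamma,1}(r,u)>0$ for all $0\le r<u$. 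Writing
$$
c_{\tau,\sigma,\gamma,1}(r,u)=\sigma r^2+(1-\gamma)r+\tau+(u-r)(\sigma r+1),
$$
the bracketed last term is positive and vanishes as $u\downarrow r$, so positivity of $c$ is equivalent to $\sigma r^2+(1-\gamma)r+\tau\ge 0$ for all $r\ge 0$; evaluating at $r=0$ gives $\tau\ge 0$, and the discriminant/boundary analysis of this one-variable quadratic gives precisely $\gamma\le 1+2\sqrt{\sigma\tau}$.

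The step I expect to be the main obstacle is the second paragraph: one must argue cleanly that the normalising constants $\sigma,\tau,\dots$ are truly $p$-independent (this rests entirely on the constancy of $v_p^*$, so the coordinatewise cancellation of $c_{\tau_p^*,\sigma_p,\gamma_p^*,\chi_p^*}(r,u)$ has to be stated carefully), and that the relation $\chi_p^*(1+p\sigma)=\chi_p$ with $\chi_p\in\{0,1\}$ excludes $\chi_p=0$ simultaneously for all $p$ rather than merely for one value. The range computation in the last paragraph is then routine once positivity of $c$ is invoked.
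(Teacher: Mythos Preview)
Your argument for the proportionality and the identification of $\chi_p^*$ is essentially identical to the paper's: both divide the components of $v_p^*$ by $\chi_p^*\ne 0$ to produce $p$-independent constants, then substitute $\sigma_p=\sigma\chi_p^*$ into $\chi_p^*=\chi_p-p\sigma_p$ (the paper cites \eqref{chi2sigma-phi*}, which amounts to your use of \eqref{params*}) to obtain $\chi_p^*(1+p\sigma)=\chi_p$ and conclude $\chi_p=1$. Both the paper and you are a touch imprecise about excluding $\chi_p=0$ for \emph{every} $p$ rather than for most $p$; your parenthetical remark (use $\sigma_p\ge 0$ for small $p$ to get $\sigma\ge 0$, whence $1+p\sigma>0$ for all $p$) is what actually closes this. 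Incidentally, \eqref{sigma*} as printed carries typos: the scalar should be $1/(1+p\sigma)$ and the ordering on the right should be $(\sigma,\tau,\gamma,\eta,\theta,1)$; your derivation correctly yields $\chi_p^*=1/(1+p\sigma)$.

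The only genuine difference is how you obtain $\tau\ge 0$ and $\gamma\le 1+2\sqrt{\sigma\tau}$. The paper takes the short route: from \eqref{params*} and the proportionality one reads off $\tau_p\to\tau$, $\gamma_p\to\gamma$, $\sigma_p\to\sigma$ as $p\downarrow 0$, and then passes to the limit in the constraints $\tau_p\ge0$ and $\gamma_p\le 1+2\sqrt{\sigma_p\tau_p}$ supplied by Lemma~\ref{Lemma-p*}. You instead work directly with $c_{\tau,\sigma,\gamma,1}$ and analyse the quadratic $\sigma r^2+(1-\gamma)r+\tau$. Both routes are valid; the paper's limit argument is a one-liner once the un-starring is written down, while yours is self-contained in that it avoids converting between starred and unstarred parameters. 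One small point: your claim that $c_{\tau,\sigma,\gamma,1}(r,u)>0$ for all $0\le r<u$ overshoots slightly, since non-vanishing of $c$ is established only on $\{0<r<u\}$; but continuity as $u\downarrow r$ (and then $r\downarrow 0$) still delivers the non-negativity of the quadratic, so your conclusion stands.
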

	\begin{proof}
		In this case, after dividing  first five
		components
		of $v^*_p$, see \eqref{TAMS*}, by its last %
		component
		we see that there exist constants $\sigma, \tau, \gamma, \eta,\theta \in\RR$  (independent of $p$) such that
		$$
		\left(\sigma_p,\, \tau_p^*,\,\gamma_p^*,\,\eta_p,\,\theta_p^*\right)=\chi_p^*\,(\sigma, \tau, \gamma, \eta,\theta).
		$$
		Plugging the relation $\sigma_p=\sigma \chi_p^*$ into \eqref{chi2sigma-phi*} we get $\chi_p^*(1+p\sigma)=\chi_p$.
		Consequently, since $\chi_p\in\{0,1\}$ and $\chi_p^*\ne0$ it follows that $\chi_p:=\chi=1$.  Thus $\chi_p^*>0$ at least for
		small $p>0$. Combining this with $\sigma_p=\sigma \chi_p^*$ and with the fact that  $\sigma_p\ge 0$ we get
		$\sigma\geq 0$, so $1+p\sigma>0$ for any $p>0$. Hence  we conclude that \eqref{sigma*} hold for all $p>0$.
		
		Note that $\lim_{p\to 0^+}\tau_p^*=\lim_{p\to 0^+}\tau_p=\tau$. Since $\tau_p\geq0$ for all $p\ge 0$ we get $\tau\geq 0$.
		Similarly, $\lim_{p\to 0^+}\gamma_p^*=\lim_{p\to 0^+}\gamma_p=\gamma$ and $\lim_{p\to 0^+}\sigma_p=\sigma$.
		Since $\gamma_p\leq 1+2\sqrt{\sigma_p\tau_p}$ for all $p>0$ we get $\gamma\leq 1+2\sqrt{\sigma\tau}$.
	\end{proof}
	\setcounter{claim}{0}
	
	Substituting  \eqref{sigma*} into \eqref{CVp*}  we immediately get
	\eqref{CV*}--\eqref{QQ*} with $\chi=1$.

	\begin{case}
		$\chi^*_p=0$   and $\sigma_p>0$ for all $p>0$.
	\end{case}
	\begin{claim}There exist constants $\eta,\theta\in\RR$, $\tau\geq 0$, $\gamma\leq 2\sqrt{\tau}$   such that for all $p>0$
		\begin{equation}
			\label{chiS}(\sigma_p,\,\tau_p^*,\,\gamma_p^*,\,\eta_p,\,\theta_p^*,\chi_p^*)=\tfrac{1}{p}(\sigma,\,\tau,\,\gamma,\,\eta,\,\theta,\,0).
		\end{equation}
	\end{claim}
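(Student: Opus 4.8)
The plan is to pin down $\sigma_p$ and $\chi_p$ from the Case B hypotheses, then read off the common $1/p$ scaling of all coordinates from the fact that $v_p^*$ in \eqref{TAMS*} is independent of $p$, and finally extract the two inequalities on $\tau$ and $\gamma$ by letting $p\to 0^+$ in the admissibility constraints of Lemma \ref{Lemma-p*}.

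First I would use \eqref{params*}: the relation $\chi_p^*=\chi_p-p\sigma_p$ together with the Case B assumptions $\chi_p^*=0$ and $\sigma_p>0$ forces $\chi_p=p\sigma_p>0$, and since $\chi_p\in\{0,1\}$ this gives $\chi_p=1$ and $\sigma_p=1/p$ for every $p>0$. In particular $p\sigma_p=1$, which is the value $\sigma=1$ appearing in \eqref{chiS}.

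Next, for fixed $r<u$ I would exploit that the vector $v_p^*$ of \eqref{TAMS*} does not depend on $p$. Dividing its $i$-th coordinate by its first coordinate cancels the common (nonzero) denominator $c_{\tau_p^*,\sigma_p,\gamma_p^*,\chi_p^*}(r,u)$, so each of $\tau_p^*/\sigma_p=p\tau_p^*$, $\gamma_p^*/\sigma_p=p\gamma_p^*$, $\eta_p/\sigma_p=p\eta_p$ and $\theta_p^*/\sigma_p=p\theta_p^*$ is independent of $p$ (here I used $\sigma_p=1/p$). Denoting these constants by $\tau,\gamma,\eta,\theta$ and recalling $\chi_p^*=0$ and $p\sigma_p=1$ gives precisely \eqref{chiS}.

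Finally I would turn the constraints $\tau_p\geq 0$ and $\gamma_p\leq\chi_p+2\sqrt{\sigma_p\tau_p}$ of Lemma \ref{Lemma-p*} into conditions on $\tau,\gamma$. The main bookkeeping point, and the only place care is needed, is that the clean $1/p$ scaling lives on the starred parameters in $v_p^*$, whereas these constraints are imposed on the unstarred $\tau_p,\gamma_p$, so I must invert \eqref{params*} first. Substituting $\chi_p=1$, $\sigma_p=1/p$, $\tau_p^*=\tau/p$, $\gamma_p^*=\gamma/p$ yields $\gamma_p=\gamma/p-1$ and $p\tau_p=\tau-\gamma p+p^2$. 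Then $\tau_p\geq 0$ reads $\tau-\gamma p+p^2\geq 0$ for all $p>0$, and letting $p\to 0^+$ gives $\tau\geq 0$. With $\tau\geq 0$ in hand, $2\sqrt{\sigma_p\tau_p}=\tfrac{2}{p}\sqrt{\tau-\gamma p+p^2}$, so the discriminant constraint becomes $\gamma-2p\leq 2\sqrt{\tau-\gamma p+p^2}$; letting $p\to 0^+$ yields $\gamma\leq 2\sqrt{\tau}$, which completes the claim.
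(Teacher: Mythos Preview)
Your proposal is correct and follows essentially the same approach as the paper: first use $\chi_p^*=\chi_p-p\sigma_p=0$ with $\sigma_p>0$ and $\chi_p\in\{0,1\}$ to get $\chi_p=1$, $\sigma_p=1/p$, then divide coordinates of $v_p^*$ by the nonzero first coordinate to extract constants $\tau,\gamma,\eta,\theta$, and finally pass to the limit $p\to 0^+$ in the admissibility constraints. Your handling of the last step is in fact slightly more careful than the paper's, since you keep the $\chi_p=1$ term in the discriminant inequality (obtaining $\gamma-2p\le 2\sqrt{\tau-\gamma p+p^2}$ rather than the paper's $\gamma-p\le 2\sqrt{\tau-\gamma p+p^2}$), though the limit is the same.
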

	\begin{proof}
		From \eqref{chi2sigma-phi*} we get
		$\chi_p=p \sigma_p>0$ for all $p>0$. As $\chi_p\in\{0,1\}$, this means that   $\chi_p=1$ and $\sigma_p=1/p$.
		Dividing components %
		2-5 of $v_p^*$ by the first component, see \eqref{TAMS*},  we see that there exist constants $\tau,
		\gamma,\eta,\theta\in\RR$ such that
		$$p(\tau_p^*,\,\gamma_p^*,\,\eta_p,\,\theta_p^*)=(\tau, \gamma,\eta,\theta).$$
		Thus we    get   \eqref{chiS}. Since $\tau_p\geq0$  we see that $\tau\geq 0$.
		Furthermore, $\gamma_p\le 2\sqrt{\tau_p\sigma_p}$ yields $\gamma-p\le 2\sqrt{\tau+p^2-p\gamma}$. %
		Taking $p\to 0$ we get $\gamma\le 2\sqrt{\tau}$.
	\end{proof}
	
	Substituting  \eqref{chiS} into \eqref{CVp*}  we immediately get
	\eqref{CV*}--\eqref{QQ*} with $\chi=\sigma=1$.

	\begin{case}
		$\chi_p^*=0$, $\sigma_p=0$ and   $\gamma_p^*= 0$ for all $p>0$.
	\end{case}
	\setcounter{claim}{0}
	\begin{claim} There are constants $\eta,\theta\in\RR$ such that for all $p>0$
		\begin{equation}
			\label{chiS22} (\sigma_p,\,\tau_p^*,\,\gamma_p^*,\,\eta_p,\,\theta_p^*,\,\chi_p^*)=(0,1,0,\eta ,\theta ,0).
		\end{equation}
	\end{claim}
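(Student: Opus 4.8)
The plan is to show that the three vanishing conditions defining Case~C force all remaining parameters into a rigid shape, after which the claimed constancy of $\eta_p$ and $\theta_p^*$ is an immediate consequence of the $p$-independence of $v^*_p$.

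First I would unwind the defining conditions of Case~C through the reparametrization \eqref{params*}. Since $\sigma_p=0$, the relations $\gamma_p^*=\gamma_p+p\sigma_p$ and $\chi_p^*=\chi_p-p\sigma_p$ reduce to $\gamma_p^*=\gamma_p$ and $\chi_p^*=\chi_p$; combined with the Case~C hypotheses $\gamma_p^*=0$ and $\chi_p^*=0$ this gives $\gamma_p=0$ and $\chi_p=0$ for every $p>0$. Because Lemma~\ref{Lemma-p*} stipulates $\tau_p=1$ whenever $\chi_p=0$, I then obtain $\tau_p=1$, and hence $\tau_p^*=\tau_p+p(\gamma_p-\chi_p)+p^2\sigma_p=1$ for all $p>0$. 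At this point the first three and the last coordinates of the vector in \eqref{chiS22} are already identified as $0,1,0$ and $0$, respectively.

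Next I would compute the normalizing denominator in \eqref{TAMS*}. With the values just found, \eqref{cc*} yields
\[
c_{\tau_p^*,\sigma_p,\gamma_p^*,\chi_p^*}(r,u)=\tau_p^*+\sigma_p\,ru-\gamma_p^*\,r+\chi_p^*\,u=1,
\]
independently of $r$ and $u$. Consequently $v^*_p$ coincides with its own numerator, namely $v^*_p=(0,1,0,\eta_p,\theta_p^*,0)$. Finally I would invoke the already-established fact that $v^*_p$ does not depend on $p$: since the denominator is the constant $1$, this $p$-independence passes verbatim to the numerator, so $\eta_p$ and $\theta_p^*$ take common values $\eta,\theta\in\RR$ for all $p>0$, which is precisely \eqref{chiS22}. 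I do not expect a genuine obstacle here; the only step requiring care is the bookkeeping linking $\sigma_p=\gamma_p^*=\chi_p^*=0$ back to $\tau_p^*=1$ via the convention $\chi_p=0\Rightarrow\tau_p=1$, since it is exactly this normalization that collapses the denominator to a constant and makes the conclusion immediate.
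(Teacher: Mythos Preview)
Your proof is correct and follows essentially the same approach as the paper: you use $\sigma_p=0$ to collapse $\gamma_p^*,\chi_p^*$ to $\gamma_p,\chi_p$, invoke the convention $\chi_p=0\Rightarrow\tau_p=1$ to obtain $\tau_p^*=1$, and then read off the constancy of $\eta_p,\theta_p^*$ from the $p$-independence of $v_p^*$. The paper's proof is simply a terser version of the same argument, leaving implicit the computation that the denominator $c_{\tau_p^*,\sigma_p,\gamma_p^*,\chi_p^*}(r,u)$ equals $1$.
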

	\begin{proof}
		Note $\gamma_p^*=\gamma_p=0$. Moreover,  $\tau_p=1$ yields $\tau_p^*=1$. Thus \eqref{chiS22} follows  from the fact that
		in \eqref{TAMS*} vector $v_p^*=(0,1,0,\eta_p,\theta_p^*,0)$  does not depend on $p>0$.
	\end{proof}
	Substituting  \eqref{chiS22} into \eqref{CVp*}   we get
	\eqref{CV*}-\eqref{QQ*} with $\chi=0, \sigma=0,\gamma=0,\tau=1$.

	\begin{case}
		$\chi_p^*=0$, $\sigma_p=0$ and $\gamma_p^*\neq 0$ for all $p>0$.
	\end{case}
	\setcounter{claim}{0}
	\begin{claim} There exist $\tau\geq  0$, and $\eta,\theta\in\RR$ such that
		for all $p>0$
		\begin{equation}
			\label{chiS33}  (\sigma_p,\,\tau_p^*,\,\gamma_p^*,\,\eta_p,\,\theta_p^*,\,\chi_p^*)=\tfrac{1}{p+\tau}
			\left(0, {\tau } ,-1, {\eta } , {\theta } , 0\right).
		\end{equation}
	\end{claim}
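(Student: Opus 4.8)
The plan is to run the same normalization argument used in Cases A--C, now dividing by the third component of $v_p^*$, which is the one guaranteed to be nonzero in Case D. First I would extract the consequences of the case hypotheses for the unstarred parameters. Since $\sigma_p=0$, the relation $\chi_p^*=\chi_p-p\sigma_p$ in \eqref{params*} forces $\chi_p=\chi_p^*=0$, so the convention $\tau_p=1$ for $\chi_p=0$ from Lemma \ref{Lemma-p*} applies. Feeding $\sigma_p=0$, $\chi_p=0$, $\tau_p=1$ into \eqref{params*} yields the two identities $\gamma_p^*=\gamma_p$ and $\tau_p^*=1+p\gamma_p^*$, valid for every $p>0$. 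The admissibility constraint $\gamma_p\le \chi_p+2\sqrt{\sigma_p\tau_p}=0$ from Lemma \ref{Lemma-p*}, together with $\gamma_p^*\ne 0$, then gives $\gamma_p^*<0$.

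Next I would exploit the $p$-independence of $v_p^*$ from \eqref{TAMS*}. Since $v_{p_1}^*=v_{p_2}^*$ as functions of $(r,u)$ on a common range, the numerator vectors $(\sigma_p,\tau_p^*,\gamma_p^*,\eta_p,\theta_p^*,\chi_p^*)$ are mutually proportional, so the ratio of any two of their components is independent of $p$. As $\gamma_p^*\ne0$, dividing the second, fourth and fifth components by the third produces constants $c_2,c_4,c_5$ (independent of $p$) with $\tau_p^*=c_2\gamma_p^*$, $\eta_p=c_4\gamma_p^*$, $\theta_p^*=c_5\gamma_p^*$, while the first and sixth components are already $0$.

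Combining $\tau_p^*=c_2\gamma_p^*$ with $\tau_p^*=1+p\gamma_p^*$ gives $(c_2-p)\gamma_p^*=1$, hence $\gamma_p^*=1/(c_2-p)$. Setting $\tau:=-c_2$, $\eta:=-c_4$, $\theta:=-c_5$ rewrites this as $\gamma_p^*=-1/(p+\tau)$, and then $\tau_p^*=1+p\gamma_p^*=\tau/(p+\tau)$, $\eta_p=c_4\gamma_p^*=\eta/(p+\tau)$, and $\theta_p^*=c_5\gamma_p^*=\theta/(p+\tau)$, which is exactly \eqref{chiS33}. To obtain $\tau\ge 0$ I would use the sign information already established: since $\gamma_p^*=1/(c_2-p)<0$ for every $p>0$, we must have $c_2<p$ for all $p>0$, whence $c_2\le 0$ and $\tau=-c_2\ge 0$; this also guarantees $p+\tau>0$, so no denominator vanishes.

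The only genuinely delicate point is keeping the signs straight so that the resulting $\tau$ is nonnegative rather than merely nonzero: the inequality $\gamma_p^*<0$ is doing the real work, and one must be sure it is available, which it is, coming from $\tau_p=1$, $\sigma_p=\chi_p=0$, and $\gamma_p^*\ne0$. Everything else is the routine proportionality-and-substitution bookkeeping already carried out in Cases A--C.
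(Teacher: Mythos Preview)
Your argument is correct and follows essentially the same route as the paper: deduce $\chi_p=0$, $\tau_p=1$, $\gamma_p^*=\gamma_p$ from \eqref{params*}, divide the remaining components of $v_p^*$ by the nonzero third component to obtain $p$-independent ratios, and then use $\tau_p^*=1+p\gamma_p^*$ to solve for $\gamma_p^*$. The only cosmetic difference is that you establish $\gamma_p^*<0$ at the outset and carry the sign through, whereas the paper first solves $\gamma_p=-1/(\tau+p)$ ``at least for small $p>0$'' and then invokes $\gamma_p\le 0$ to get $\tau\ge 0$ and extend to all $p$; your ordering avoids that small-$p$ caveat but is otherwise the same computation.
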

	\begin{proof}  By Lemma \ref{Lemma-p*} we have $\tau_p=1$, $\gamma_p^*=\gamma_p$ and $\chi_p=0$.
		Dividing  the remaining  components of $v_p^*$ by its third component, see \eqref{TAMS*}, we  get
		$$(\tau_p^*,\eta_p,\theta_p^*)=-\gamma_p^*(\tau,\eta,\theta)$$
		for some  constants $\tau,\eta,\theta$. Since  $\tau_p^*=1+p\gamma_p$, comparing the first components in the above equation
		we get $\gamma_p=-\tfrac{1}{\tau+p}$, at least for small $p>0$.  This identity together with the fact that $\gamma_p\le 0$
		for all $p>0$ yields $\tau\ge 0$. Hence the identity holds for all $p>0$, and \eqref{chiS33} follows.
	\end{proof}
	
	Substituting  \eqref{chiS33} into \eqref{CVp*}   we get
	\eqref{CV*}-\eqref{QQ*} with $\chi=0, \sigma=0,\gamma=-1$.

	\begin{remark}
		A conjecture in \cite{Bryc-Wesolowski-09} says that when $\chi=1$, the remaining parameters determine the distribution of
		the quadratic harness uniquely. The following example shows that uniqueness may fail when $\chi=0$.
		
		Let $(X_t)_{t\geq0}$ be a Markov process defined   in \cite[Section 4.1]{bib_BrycWesolowski} as the $q$-Brownian process
		with $q=-1$
		, i.e.,  \eqref{CV*} holds with parameters $\chi=1$, $\theta=\eta=\sigma=\tau=0$ and $\gamma=-1$.   It is known that
		$(X_t)_{t\geq0}$ is a standard harness and $|X_t|=\sqrt{t}$  for all  $t\geq0$.   Hence linear independence assumption
		fails,
		and we can write the quadratic form \eqref{QQ*} in many ways.
		
		Let $(Z_t)_{t\geq0}$ be a stochastic process  given by $Z_t:=Y \cdot X_t,$ where $Y$ is a random variable such that $Y$ and
		$(X_t)_{t\geq0}$ are independent and $\mathbb{E}Y^2=1$. Then $\mathbb{E}Z_t=0$, $\mathbb{E}Z_sZ_t=s\wedge t$. Let
		$(\mathcal{F}_{s,u})_{0\leq s<u}$ and $(\mathcal{G}_{s,u})_{0\leq s<u}$ be filtrations for $(X_t)_{t\geq0}$ and
		$(Z_t)_{t\geq0}$ respectively. Then
		\begin{equation*}
			\begin{split}
				\mathbb{E}\big(Z_t\big|\mathcal{G}_{s,u}\big)&=\mathbb{E}\big(Y\mathbb{E}\big(X_t\big|Y,\mathcal{F}_{s,u}\big)\big|\mathcal{G}_{s,u}\big)=
				\mathbb{E}\big(Y\mathbb{E}\big(X_t\big|\mathcal{F}_{s,u}\big)\big|\mathcal{G}_{s,u}\big)\\	
				&=\mathbb{E}\big(Y\big(\tfrac{u-t}{u-s}X_s+\tfrac{t-s}{u-s}X_u\big)\big|\mathcal{G}_{s,u}\big)=\tfrac{u-t}{u-s}Z_s+\tfrac{t-s}{u-s}Z_u,
			\end{split}
		\end{equation*}
		hence $(Z_t)_{t\geq0}$ is a harness. Because
		$X_t^2=t=\frac{u-t}{u-s}s+\frac{t-s}{u-s}u=\frac{u-t}{u-s}X_s^2+\frac{t-s}{u-s}X_u^2$ holds, then for all $0\leq s<t<u$ we
		have
		$$\mathbb{E}\big(Z_t^2\big|\mathcal{G}_{s,u}\big)=\mathbb{E}\big(t
		Y^2\big|\mathcal{G}_{s,u}\big)=\tfrac{u-t}{u-s}Z_s^2+\tfrac{t-s}{u-s}Z_u^2,$$
		so $(Z_t^2)_{t\geq0}$ is also a harness. From the above form, it follows that
		$(Z_t)_{t\geq0}$ is a quadratic harness with parameters  $\chi=\theta=\eta=\gamma=\sigma=0$ and $\tau=1$. The
		distribution of $Z_t$ is not unique as it depends on the choice of $Y$.
		
	\end{remark}

	\subsection*{Acknowledgements}
	The authors thank the anonymous referee for a  thorough report that improved the
	readability of the paper.
	This work was supported by a grant from the Simons Foundation/SFARI Award Number: 703475 (WB) and  grant
	2016/21/B/ST1/00005 of National Science Centre, Poland (JW).
	
	
	
	
	\normalsize
	\baselineskip=17pt

\end{document}